\theoremstyle{definition}
\newtheorem*{ack}{Acknowledgements}
\newtheorem*{notat}{Notation}
\newtheorem*{thm*}{Theorem}
\newtheorem*{rem}{Remark}
\theoremstyle{plain}
\newtheorem{thm}{Theorem}[section]
\newtheorem{lem}[thm]{Lemma}
\newtheorem{prop}[thm]{Proposition}
\newtheorem{corol}[thm]{Corollary}
\numberwithin{equation}{section}
\newcommand*{\@old@slash}{}\let\@old@slash\slash
\def\slash{\relax\ifmmode\delimiter"502F30E\mathopen{}\else\@old@slash\fi}
\def\backslash{\delimiter"526E30F\mathopen{}}
\DeclareMathOperator{\num}{num}
\DeclareMathOperator{\Gal}{Gal}
\DeclareMathOperator{\GL}{GL}
\DeclareMathOperator{\SL}{SL}
\newcommand{\proj}{\mathbb{P}}
\newcommand{\Q}{\mathbb{Q}}
\newcommand{\Z}{\mathbb{Z}}
\newcommand{\C}{\mathbb{C}}
\newcommand{\F}{\mathbb{F}}
\DeclareMathOperator{\Aut}{Aut}
\newcommand{\uC}{\mathcal{H}}
\newcommand{\idp}{\frak{p}}
\DeclareMathOperator{\Frob}{Frob}
\DeclareMathOperator{\End}{End}
\DeclareMathOperator{\PGL}{PGL}
\newcommand{\loc}{{\mathcal O}}
\DeclareMathOperator{\Hom}{Hom}
\DeclareMathOperator{\Res}{Res}
\DeclareMathOperator{\Tr}{Tr}
\DeclareMathOperator{\Spec}{Spec}
\DeclareMathOperator{\Tors}{Tors}
\DeclareMathOperator{\Cot}{Cot}
\title{Some cases of Serre's uniformity problem}
\author{Pedro Lemos}
\address{Max Planck Institute for Mathematics\\Vivatsgasse 7\\Bonn 53111\\Germany}
\email{lemos.pj@gmail.com}
\date{\today}
\begin{document}
\maketitle

\begin{abstract}
We show that if $E/\Q$ is an elliptic curve without complex multiplication and for which there is a prime $q$ such that the image of $\bar{\rho}_{E,q}$ is contained in the normaliser of a split Cartan subgroup of $\GL_2(\F_q)$, then $\bar{\rho}_{E,p}$ surjects onto $\GL_2(\F_p)$ for every prime $p>37$. This result complements a previous result by the author. We also prove analogue results for certain families of $\Q$-curves, building on results of Ellenberg (2004) and Le~Fourn (2016). 
\end{abstract}
\section{Introduction}
Let $K$ be a number field and $E$ an elliptic curve defined over $K$. Given a prime number~$p$, we will denote the mod $p$ Galois representation obtained from the Galois action on the $p$-torsion points of $E(\bar{K})$ (where $\bar{K}$ is an algebraic closure of $K$) by $\bar{\rho}_{E,p}$. The image of this representation is contained in $\GL(E[p])$, which is (non-canonically) isomorphic to $\GL_2(\F_p)$. We will often implicitly make a choice of an $\F_p$-basis for $E[p]$ and regard $\bar{\rho}_{E,p}$ as having image contained in $\GL_2(\F_p)$. Throughout this paper, we will say that $\bar{\rho}_{E,p}$ is \emph{surjective} if its image is the whole of $\GL_2(\F_p)$. The question of determining under what conditions these representations are surjective is very important in modern number theory. One of the earliest and most striking results in this area is due to Serre.
\begin{thm}[{\cite[Th\'eor\`eme~2]{ser_prop}}]\label{serthm}
Let $K$ be a number field and let $E$ be an elliptic curve defined over $K$ and without complex multiplication. There exists a constant $C_{E,K}$ such that $\bar{\rho}_{E,p}$ is surjective for every prime $p>C_{E,K}$.
\end{thm}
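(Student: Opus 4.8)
The plan is to combine a group-theoretic reduction with arithmetic input at the prime $p$ and with the finiteness of the set of elliptic curves over $K$ with good reduction outside a fixed finite set. First I would record that for $p$ large the determinant of $\bar{\rho}_{E,p}$ is surjective: by the Weil pairing $\det\bar{\rho}_{E,p}$ is the mod $p$ cyclotomic character, and since $K$ has only finitely many subfields and a nontrivial subfield of $K$ has conductor dividing $p$ for at most one prime $p$, we get $K\cap\Q(\zeta_p)=\Q$ — hence $\det\bar{\rho}_{E,p}$ onto $\F_p^{\times}$ — for all but finitely many $p$. By Dickson's classification of the subgroups of $\GL_2(\F_p)$ it then suffices to show that, for $p$ large, the image $G_p:=\bar{\rho}_{E,p}(\Gal(\bar K/K))$ lies in none of: a Borel subgroup; the normaliser of a split or a non-split Cartan subgroup; a subgroup whose image in $\PGL_2(\F_p)$ is isomorphic to $S_4$ or to $A_5$. (Surjectivity of the determinant already rules out the one remaining maximal subgroup, $\{g:\det g\in(\F_p^{\times})^2\}$.)

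The crucial local input is that the image of an inertia subgroup at $p$ is large. Passing to a finite extension $K'/K$, of degree bounded independently of $p$, over which $E$ has semistable reduction, the restriction of $\bar{\rho}_{E,p}$ to inertia at a place of $K'$ above $p$ is of an explicit shape: by the theory of the Tate curve in the multiplicative case, and by Raynaud's description of finite flat group schemes together with Serre's fundamental characters of level $\le 2$ in the good reduction case. In every case one reads off that the image of this inertia group, viewed in $\PGL_2(\F_p)$, contains a cyclic subgroup of order at least $(p-1)/c_K$ for a constant $c_K$ depending only on $[K:\Q]$. For $p$ large this order exceeds $60$, which rules out the exceptional case at once (neither $S_4$ nor $A_5$ has a cyclic subgroup of order $>5$); and it exceeds $2$, which forces the image of inertia at $p$ into the Cartan subgroup itself whenever $G_p$ sits inside a Cartan normaliser.

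For the reducible case, suppose $G_p$ lies in a Borel. Then $E$ has a $K$-rational cyclic $p$-isogeny $E\to E_p$, and since a $p$-isogeny induces an isomorphism on $\ell$-adic Tate modules for every $\ell\neq p$, the curve $E_p$ has good reduction outside the fixed set $S$ of bad primes of $E$. By Shafarevich's finiteness theorem there are only finitely many such $E_p$ up to $K$-isomorphism; so if $G_p$ were reducible for infinitely many $p$, two distinct primes $p_1\neq p_2$ would give isogenies $E\to E'$ of degrees $p_1$ and $p_2$ to a single curve $E'$, and composing one with the dual isogeny of the other yields an endomorphism of $E$ of degree $p_1p_2$. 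As $p_1p_2$ is not a perfect square this is incompatible with $\End(E)=\Z$, contradicting the hypothesis that $E$ has no complex multiplication. Hence $G_p$ is reducible for only finitely many $p$.

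Finally, the Cartan case, which I expect to be the main obstacle. Suppose $G_p$ lies in the normaliser of a Cartan subgroup for infinitely many $p$; passing to a subsequence we may fix the type, split or non-split. The composite $\Gal(\bar K/K)\to N(C)/C\cong\Z/2$ is a quadratic (or trivial) character which, by the inertia analysis above, is unramified at $p$, hence unramified outside $S$; there being only finitely many such characters, the quadratic field $L$ it cuts out is the same for infinitely many of our primes. Over $L$ the image of $\bar{\rho}_{E,p}$ lies in $C$. In the split case $\bar{\rho}_{E,p}|_{\Gal(\bar K/L)}$ is reducible, so the isogeny argument above, run over $L$, shows $E$ acquires complex multiplication — a contradiction. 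In the non-split case $\bar{\rho}_{E,p}|_{\Gal(\bar K/L)}$ is irreducible over $\F_p$ but is given by a character $\phi\colon\Gal(\bar K/L)\to\F_{p^2}^{\times}$ with $\phi^{1+p}$ equal to the cyclotomic character; combining the fact that, by Raynaud's theorems, $\phi$ restricted to inertia above $p$ is essentially a fixed power of a fundamental character of level $2$, with the boundedness of the conductor of $\phi$ away from $p$, one shows $\phi$ must be the reduction of a Hecke character of an imaginary quadratic field contained in $L$, forcing $E$ to have complex multiplication once $p$ is large — again a contradiction. (When $L\neq K$ one can argue more concretely: for every prime $v$ of $K$ inert in $L$ one has $\Tr\bar{\rho}_{E,p}(\Frob_v)=0$, so $a_v(E)\equiv 0\pmod p$, whence $a_v(E)=0$ for $p$ large, meaning $E$ is supersingular along a set of primes of density $1/2$, which is impossible for a non-CM curve.) Putting the three cases together, all but finitely many $p$ give $G_p=\GL_2(\F_p)$, and we take $C_{E,K}$ above the finitely many exceptions; as anticipated, it is the non-split Cartan step that requires genuinely feeding in the no-CM hypothesis, through the interaction between the local structure at $p$ and global class field theory.
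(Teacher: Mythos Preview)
The paper does not prove this theorem; it is stated with a citation to Serre's original paper \cite{ser_prop} and used only as a black box in the proofs of the later results. There is therefore no proof in the paper to compare your proposal against.

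Your sketch is a reasonable outline of Serre's argument. The determinant/Dickson reduction, the inertia-at-$p$ argument disposing of the exceptional images, and the Shafarevich argument for the Borel (and, over $L$, the split Cartan) case are all correct and standard. Two small points: $A_4$ is missing from your list of exceptional projective images, though your inertia bound handles it along with $S_4$ and $A_5$; and in the Shafarevich step you should note that isogenous elliptic curves have the same reduction type at \emph{all} places (including those above $p$), so the target curve really does lie in a set depending only on $E$ and $K$.

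The genuine gap is in the non-split Cartan case. The sentence ``one shows $\phi$ must be the reduction of a Hecke character of an imaginary quadratic field contained in $L$, forcing $E$ to have complex multiplication once $p$ is large'' is precisely the hard part of Serre's theorem, not a step one can wave through: it requires passing from mod-$p$ information for infinitely many $p$ to a statement about a fixed $\ell$-adic representation, and then invoking the theory of locally algebraic abelian $\ell$-adic representations to force CM. Your parenthetical alternative (density $1/2$ of supersingular primes) is more concrete, but the clause ``which is impossible for a non-CM curve'' either appeals to Serre's density-zero theorem for supersingular primes --- whose proof already uses the open image theorem --- or, post hoc, to Faltings's isogeny theorem to conclude that $E$ is $K$-isogenous to its quadratic twist by $L/K$, hence has CM. Either way this step hides a substantial theorem, and as you yourself anticipate, it is exactly here that the no-CM hypothesis does real work.
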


Serre's uniformity problem (see section 4.3 of~\cite{ser_prop}) asks to what extent the constant $C_{E,K}$ of the theorem above is dependent on $E$. More precisely, it asks whether there exists a constant $C_K$ depending only on $K$ such that, given an elliptic curve $E$ defined over $K$ and without complex multiplication, the residual mod $p$ Galois representation $\bar{\rho}_{E,p}$ is surjective for every prime $p>C_K$. An affirmative answer to this question would be likely to yield important applications in the study of certain Diophantine equations, as the work of Darmon and Merel~\cite{darmer} shows.

The most studied and most well understood case is, naturally, the one where $K=\Q$. The strongest result we have to this date is the following.
\begin{thm}[\cite{bilpar,bilparreb,maz_eis,maz_rat,ser_prop}]\label{soa}
Let $E$ be an elliptic curve defined over $\Q$  and without complex multiplication. Let $p$ be a prime number strictly larger than $37$. If $\bar{\rho}_{E,p}$ is not surjective, then its image is contained in the normaliser of a non-split Cartan subgroup of $\GL_2(\F_p)$.
\end{thm}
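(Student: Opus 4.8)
The plan is to deduce the theorem from the classification of subgroups of $\GL_2(\F_p)$ together with several known results on rational points of modular curves.

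First I would reduce to the projective image. Since $E$ is defined over $\Q$, the determinant of $\bar\rho_{E,p}$ is the mod $p$ cyclotomic character, which is surjective onto $\F_p^\times$. By a standard group-theoretic lemma (\cite[\S 2]{ser_prop}), for $p\ge 5$ a subgroup $G\subseteq\GL_2(\F_p)$ with $\det(G)=\F_p^\times$ equals $\GL_2(\F_p)$ exactly when the image of $G$ in $\PGL_2(\F_p)$ contains $\PSL_2(\F_p)$ (in which case the surjectivity of $\det$ forces the projective image to be all of $\PGL_2(\F_p)$). Hence, if $\bar\rho_{E,p}$ is not surjective, then the image of $\bar\rho_{E,p}$ in $\PGL_2(\F_p)$ is a subgroup not containing $\PSL_2(\F_p)$, and by Dickson's classification it is contained in one of: (i) a Borel subgroup; (ii) the normaliser of a split Cartan subgroup; (iii) the normaliser of a non-split Cartan subgroup; or (iv) an exceptional subgroup isomorphic to $A_4$, $S_4$ or $A_5$. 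It then suffices to rule out (i), (ii) and (iv) when $p>37$.

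I would dispose of (iv) first, as it is the most elementary: by Serre's analysis of the exceptional subgroups \cite{ser_prop}, the projective image of $\bar\rho_{E,p}$ attached to an elliptic curve over $\Q$ is never exceptional once $p>13$. For (i), if the image lies in a Borel subgroup then $E$ admits a $\Q$-rational subgroup of order $p$, hence a non-cuspidal $\Q$-rational point on $X_0(p)$; by Mazur's theorem on rational isogenies of prime degree \cite{maz_rat}, resting on the Eisenstein ideal machinery of \cite{maz_eis}, the only non-cuspidal $\Q$-rational points of $X_0(p)$ for $p>37$ are CM points, contradicting that $E$ has no complex multiplication. This leaves (ii): here $E$ yields a non-cuspidal $\Q$-rational point on the modular curve attached to the normaliser of a split Cartan, which is isomorphic to $X_0^+(p^2)$. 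By the theorem of Bilu and Parent \cite{bilpar}, and its extension by Bilu, Parent and Rebolledo \cite{bilparreb}, this curve has only cusps and CM points as $\Q$-rational points for every prime $p>13$; so (ii) is impossible for $p>37$ as well, and the only surviving possibility is (iii), which is exactly the conclusion of the theorem.

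The main obstacle is case (ii). Ruling it out requires the deepest and most recent ingredient, namely the determination of the $\Q$-rational points on $X_0^+(p^2)$, a curve whose genus grows with $p$ and whose Jacobian is not amenable to Mazur's Eisenstein-ideal techniques. This is where genuinely new methods enter: the adaptation of Runge's method to these modular curves, together with the accompanying height and Chabauty-type estimates of \cite{bilpar,bilparreb}. By comparison, the Borel case is the classical Eisenstein ideal argument, and the exceptional case is essentially formal once Serre's framework is available.
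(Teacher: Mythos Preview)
Your outline is correct, but note that the paper does not itself prove this theorem: it is stated in the introduction as a known result, with the proof distributed across the cited references \cite{bilpar,bilparreb,maz_eis,maz_rat,ser_prop}. What you have written is precisely the standard synthesis of those references---Dickson's classification reduces to the four cases, Serre \cite{ser_prop} disposes of the exceptional subgroups, Mazur \cite{maz_eis,maz_rat} disposes of the Borel case, and Bilu--Parent--Rebolledo \cite{bilpar,bilparreb} dispose of the split Cartan normaliser---so there is nothing to compare against in the paper itself.

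Two small points of precision. First, Bilu--Parent--Rebolledo treat $p=11$ and $p\geq 17$, not ``every prime $p>13$''; the prime $p=13$ is only handled by the later work of Balakrishnan--Dogra--M\"uller--Tuitman--Vonk (Theorem~\ref{balak} in the paper). This does not affect your argument since you only need $p>37$. Second, your identification of the split Cartan case as the deepest ingredient is accurate and matches the historical order in which these results were obtained; the paper's own emphasis (see the discussion around Theorems~\ref{bpr} and~\ref{balak}) is consistent with this.
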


In~\cite{lem}, the author showed that the normaliser of a non-split Cartan case cannot occur for primes $p>37$ if an elliptic curve as in the theorem above admits a non-trivial cyclic isogeny defined over $\Q$. 
\begin{thm}[{\cite[Theorem~1.1]{lem}}]\label{lem1}
Let $E$ be an elliptic curve defined over $\Q$ and without complex multiplication. Suppose that $E$ admits a non-trivial cyclic isogeny defined over $\Q$. Then $\bar{\rho}_{E,p}$ is surjective for every prime $p>37$.
\end{thm}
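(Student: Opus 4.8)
The plan is to combine Theorem~\ref{soa} with a rational-points argument on a suitable modular curve. By Theorem~\ref{soa}, it is enough to show that for $p>37$ no non-CM elliptic curve $E/\Q$ admitting a non-trivial cyclic isogeny defined over $\Q$ can have the image of $\bar{\rho}_{E,p}$ contained in the normaliser $C_{\mathrm{ns}}^{+}(p)$ of a non-split Cartan subgroup of $\GL_2(\F_p)$; write $C_{\mathrm{ns}}(p)\subset C_{\mathrm{ns}}^{+}(p)$ for the Cartan, a cyclic group of order $p^2-1$ acting irreducibly on $\F_p^2$. Let the cyclic isogeny have degree $n>1$ and kernel $C\subseteq E$. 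First I would handle the case $p\mid n$: then the unique subgroup of order $p$ of the cyclic group $C$ is $\Gal(\Qclos/\Q)$-stable, so $E$ has a $\Q$-rational $p$-isogeny, which by Mazur's theorem on rational isogenies of prime degree~\cite{maz_rat} is impossible for a non-CM curve as soon as $p>37$. Hence $p\nmid n$, the isogeny $E\to E/C$ induces a $\Gal(\Qclos/\Q)$-isomorphism $E[p]\xrightarrow{\ \sim\ }(E/C)[p]$, and $n$ lies in an explicit finite set of integers (Mazur and Kenku).

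The geometric point is that $E$ then gives a $\Q$-rational point on a smooth projective curve $Y$ over $\Q$, namely (the normalisation of) the component through $E$ of the fibre product $X_0(n)\times_{X(1)}X_{\mathrm{ns}}^{+}(p)$. The reason the $\Gamma_0(n)$-structure is useful is that $X_0(n)$ has at least two cusps for $n>1$, so $Y$ has at least two cusps above the cusp of $X(1)$, whereas $X_{\mathrm{ns}}^{+}(p)$ by itself has exactly one — and it is the presence of $\ge 2$ cusps that opens the door to Runge's method. Alongside this I would extract the quadratic field $L\subseteq\Q(E[p])$ cut out by the character $\Gal(\Qclos/\Q)\to C_{\mathrm{ns}}^{+}(p)/C_{\mathrm{ns}}(p)\cong\{\pm 1\}$ coming from $\bar{\rho}_{E,p}$ (if this character is trivial, the image already lies in the abelian group $C_{\mathrm{ns}}(p)$, a more restrictive situation that I would dispose of along the way). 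Over $L$, $\bar{\rho}_{E,p}$ takes values in $C_{\mathrm{ns}}(p)\cong\F_{p^2}^{\times}$, so $E[p]\otimes_{\F_p}\F_{p^2}\cong\theta\oplus\theta^{p}$ for a character $\theta$ of $\Gal(\Qclos/L)$ with $\theta\cdot\theta^{p}$ equal to the mod~$p$ cyclotomic character; this isogeny character is unramified outside $p$ and the bad primes of $E$, its restriction to inertia at $p$ is governed by the fundamental characters — and since the image cannot lie in $C_{\mathrm{ns}}^{+}(p)$ when $E$ has good ordinary reduction at $p>3$, only the supersingular and additive cases at $p$ survive — while the $\Q$-rational $n$-isogeny bounds its conductor further. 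This information is what one needs to locate the CM points of $Y$ and to set up a formal immersion.

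With $Y$ at hand, the plan is to show that $Y(\Q)$ consists only of cusps and CM points for every $p>37$, following Bilu--Parent~\cite{bilpar} and Bilu--Parent--Rebolledo~\cite{bilparreb} in the split Cartan case: use Runge's method — available because of the cusps supplied by $X_0(n)$ — to bound the height of a hypothetical non-cuspidal non-CM $\Q$-point, and so eliminate all sufficiently large $p$; then, for the finitely many remaining $p>37$, apply Mazur's formal-immersion technique at a well-chosen auxiliary prime, working on the winding quotient of $J_0(np^2)$, whose group of $\Q$-rational points is finite and to which $Y$ can be linked through modular-curve theory (for instance via Chen's isogeny involving $\Jac(X_{\mathrm{ns}}^{+}(p))$ and $\Jac(X_0(p^2))$). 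The few values of $n$ for which $X_0(n)$ has positive genus ($n\in\{11,17,37\}$) I would instead treat directly: $E$ is then one of finitely many explicit curves, each covered by Serre's Theorem~\ref{serthm} together with a finite search for a Frobenius element $\bar{\rho}_{E,p}(\Frob_\ell)$ that lies in no non-split Cartan normalizer. Since $E$ is non-CM and is not a cusp, each route gives the desired contradiction.

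The main obstacle is precisely this last step: the analogous statement for $X_{\mathrm{ns}}^{+}(p)$ alone is still open, so everything hinges on showing that the extra cusps contributed by $X_0(n)$ really do make Runge's method, and then the formal-immersion argument, succeed \emph{uniformly for all $p>37$} rather than one prime at a time. Keeping simultaneous control of the field $L$, the isogeny character $\theta$, the bad reduction of $E$, and the CM locus of $Y$ throughout is where most of the technical effort would go.
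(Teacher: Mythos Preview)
This theorem is not proven in the present paper --- it is quoted from \cite{lem} --- but the paper explicitly describes the strategy used there (and then reproduces it in detail for the split-Cartan analogue, Theorem~\ref{qmaintheorem}). That strategy is much simpler than yours, and the difference matters because your outline contains a genuine gap.

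Your plan hinges on making Runge's method work on $X_0(n)\times_{X(1)}X_{\mathrm{ns}}^{+}(p)$ to dispose of all large $p$, with formal immersion cleaning up finitely many survivors. You yourself flag this as ``the main obstacle'': getting Runge to succeed uniformly in $p$ on a curve involving $X_{\mathrm{ns}}^{+}(p)$ is not known, and the discussion of the isogeny character $\theta$, the quadratic field $L$, and the CM locus does not close this gap --- it only sets the stage. As written, this is a research programme rather than a proof.

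The approach actually taken in \cite{lem} (and mirrored here in the proof of Proposition~\ref{int_j} and Theorem~\ref{qmaintheorem}) sidesteps Runge entirely. One runs Mazur's formal immersion argument \emph{once, uniformly for all $p\geq 11$}, on the curve $X_0(q)\times_{X(1)}X_{\mathrm{ns}}^{+}(p)$ mapping to a rank-zero quotient of the Jacobian of $X_{0,\mathrm{ns}}^{+}(q,p)$; this already forces $j(E)\in\Z$ whenever the image of $\bar\rho_{E,p}$ sits in a non-split Cartan normaliser. No height bound in $p$ is needed. The finiteness then comes from the \emph{other} factor: Mazur's isogeny theorem pins $q$ to a short explicit list, and for each such $q$ the curve $X_0(q)$ has an explicit $j$-map with only finitely many $\Z$-points, which one lists and checks individually (exactly as Table~\ref{table:split} is used here for $X_{\mathrm{sp}}^{+}(q)$). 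So the key idea you are missing is that formal immersion does all the work in $p$, and the remaining finiteness is a computation on $X_0(q)$, not a bound on $p$.
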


Another way of saying that an elliptic curve defined over a number field $K$ admits a non-trivial cyclic isogeny defined over $K$ is by saying that there exists a prime $q$ for which the image of $\bar{\rho}_{E,q}:G_K\rightarrow \GL_2(\F_q)$ is contained in a Borel subgroup of $\GL_2(\F_q)$. It is then natural to ask whether we can obtain results of the same kind if we replace ``Borel subgroup'' by another maximal subgroup of $\GL_2(\F_q)$. In the first part of this paper, we show that the same result holds if this maximal subgroup is chosen to be the normaliser of a split Cartan. More precisely, we show the following theorem.

\begin{thm}\label{qmaintheorem}
Let $E/\Q$ be an elliptic curve without complex multiplication. Suppose that there exists a prime $q$ for which the image of $\bar{\rho}_{E,q}$ is contained in the normaliser of a split Cartan subgroup of $\GL_2(\F_q)$. Then $\bar{\rho}_{E,p}$ is surjective for every $p>37$.
\end{thm}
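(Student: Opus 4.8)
The plan is to combine Theorem~\ref{soa} with Mazur's method for rational points on modular curves, in the spirit of Theorem~\ref{lem1} and of the work of Bilu--Parent--Rebolledo \cite{bilpar,bilparreb} on the split Cartan case. By Theorem~\ref{soa} it suffices to prove that no non-CM elliptic curve $E/\Q$ can simultaneously satisfy: for some prime $q$ the image of $\bar{\rho}_{E,q}$ lies in the normaliser of a split Cartan subgroup, and for some prime $p>37$ the image of $\bar{\rho}_{E,p}$ lies in the normaliser of a non-split Cartan subgroup. Fix such $E$, $p$, $q$. These two conditions say precisely that $E$ gives a non-cuspidal, non-CM rational point on the fibre product $X:=X_s^+(q)\times_{X(1)}X_{ns}^+(p)$, where $X_s^+(q)$ (isomorphic to $X_0(q^2)^+$) is the modular curve attached to the normaliser of a split Cartan of level $q$ and $X_{ns}^+(p)$ the one attached to the normaliser of a non-split Cartan of level $p$.

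The first step is to restrict $q$. For $q\ge 11$ with $q\ne 13$, the theorem of Bilu--Parent--Rebolledo \cite{bilpar,bilparreb} says that every rational point of $X_s^+(q)$ is a cusp or a CM point, so no such $q$ occurs; and $q=2$ is already covered by Theorem~\ref{lem1}, since then $\bar{\rho}_{E,2}$ is reducible and $E$ has a rational cyclic isogeny. One is left with $q\in\{3,5,7,13\}$ (and for $q\le 7$ the curve $X_s^+(q)$ has genus $0$, so these small levels carry the real content), and for each of them one must show that $X$ has no non-cuspidal, non-CM rational point when $p>37$. It is worth recording the geometric reformulation: if $K$ denotes the quadratic field cut out by the split Cartan structure at $q$, then over $K$ the curve $E$ becomes $q$-isogenous to a $\Q$-curve $E'$ completely defined over $K$, of degree dividing $q^2$; since $p\ne q$ one has $E[p]\cong E'[p]$ as $G_K$-modules, so $E'$ is a non-CM $\Q$-curve whose mod $p$ representation restricted to $G_K$ has image in the normaliser of a non-split Cartan. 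This is exactly the kind of configuration ruled out by the $\Q$-curve results of the second part of the paper (building on Ellenberg and Le~Fourn), and it explains conceptually why Theorem~\ref{qmaintheorem} should hold.

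To eliminate the rational point on $X$ for each level $q\in\{3,5,7,13\}$ I would run the formal immersion argument. Using the known isogeny decomposition of the Jacobian of $X_{ns}^+(p)$ (Chen's isogeny, realising it inside the new part of $J_0(p^2)$), together with the Eisenstein quotient in the level-$q$ direction and the winding quotient in the level-$p$ direction, one produces a nontrivial $\Q$-rational quotient $A$ of $\Jac(X)$ with $A(\Q)$ finite: the newforms involved have analytic rank $0$, so by Kolyvagin--Logach\"ev their abelian varieties have rank $0$. One then picks an auxiliary prime $\ell\nmid 6pq$ of good reduction for $E$, reduces the putative point of $X(\Q)$ modulo $\ell$, and shows that $X\to\Jac(X)\to A$ is a formal immersion at the reduced point; as in Merel's and Bilu--Parent's work this comes down to the non-vanishing of a matrix of Hecke eigenvalues of the relevant newforms, which can be arranged to hold for all $p>37$, possibly after setting aside finitely many explicit primes. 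Granting the formal immersion, the rational point lies in a finite, explicitly describable set, which one checks consists only of cusps and CM points --- contradicting that $E$ is non-CM. The finitely many remaining curves $X_s^+(q)\times_{X(1)}X_{ns}^+(p)$ (finitely many small $q$, finitely many set-aside $p$) are then treated by direct computation.

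The main obstacle is this last step: exhibiting the rank-zero quotient $A$ and, above all, verifying the formal immersion criterion \emph{uniformly} for all $p>37$, so that no improvement of the bound $37$ beyond what Theorem~\ref{soa} already provides is needed --- it is essential that Theorem~\ref{soa} has disposed of every maximal subgroup except the normaliser of a non-split Cartan, leaving a single case to kill. The other delicate points are the correct matching of the quadratic field attached to the split Cartan structure at $q$ with the one attached to the non-split Cartan structure at $p$, the separate bookkeeping for the small levels $q\in\{3,5,7,13\}$, and the explicit computations for the residual primes.
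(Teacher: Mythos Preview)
Your high-level strategy --- formal immersion into a rank-zero quotient of the Jacobian of $X=X_{\mathrm{sp}}^+(q)\times_{X(1)}X_{\mathrm{ns}}^+(p)$ --- is the right one, and it is what the paper does. But the way you describe applying the formal immersion is confused, and as written the argument does not close.

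The gap is in the sentence ``one then picks an auxiliary prime $\ell\nmid 6pq$ of good reduction for $E$, reduces the putative point of $X(\Q)$ modulo $\ell$, and shows that $X\to\Jac(X)\to A$ is a formal immersion at the reduced point''. That is not how the Mazur argument works. Formal immersion is established at the \emph{cusp} $\infty$ in characteristic $\ell$, and it is used only when the rational point $P$ \emph{reduces to} $\infty$ modulo $\ell$; then, since $h(P)$ is torsion and torsion injects under reduction, one gets $h(P)=h(\infty)$ and hence $P=\infty$, a contradiction. But $P$ reduces to a cusp precisely when $E$ has potentially \emph{multiplicative} reduction at $\ell$, not good reduction. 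At a prime of good reduction you learn nothing this way. So the output of the formal immersion step is not ``$P$ lies in a finite explicit set of cusps and CM points''; it is that $E$ has potentially good reduction at every relevant prime, i.e.\ that $j(E)\in\Z$. This is exactly Proposition~\ref{int_j}, and it is the key intermediate statement you are missing. (One also needs to control which primes $\ell$ must be considered: Proposition~\ref{goodp} shows that potentially multiplicative reduction at $\ell\ne p$ forces $\ell^2\equiv 1\pmod p$, so in particular $\ell\ge 5$ and the formal immersion machinery applies.)

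Once $j(E)\in\Z$ is established, the paper's endgame is quite different from what you sketch. Rather than trying to list rational points on $X$ directly, it forgets the level-$p$ structure and uses the explicit $j$-maps $X_{\mathrm{sp}}^+(q)\to\proj^1$ for $q\in\{3,5,7\}$ (Table~\ref{table:split}) to find \emph{all} rational points with integral $j$; this is a finite elementary computation. The non-CM ones have $j\in\{-5000,-1728\}$, and one checks surjectivity for a representative curve of each $j$-invariant (surjectivity being a twist-invariant for non-CM curves). For $q=13$ the paper does not carry out this step; instead it invokes the recent result of Balakrishnan--Dogra--M\"uller--Tuitman--Vonk that $X_{\mathrm{sp}}^+(13)(\Q)$ consists only of cusps and CM points, eliminating $q=13$ at the outset. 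Your $\Q$-curve reformulation is a nice observation but plays no role in the actual proof over $\Q$.
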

Note that it follows from the work of Bilu, Parent and Rebolledo~\cite{bilparreb} that there are only finitely many primes $q$ for which there exists a non-CM elliptic curve defined over $\Q$ such that the image of $\bar{\rho}_{E,q}$ is contained in the normaliser of a split Cartan subgroup of $\GL_2(\F_q)$. More precisely, they show that $q\in\{2,3,5,7,13\}$. Moreover, by the recent work of Balakrishnan, Dogra, M\"uller, Tuitman and Vonk~\cite{balak}, the prime $13$ is not on this list, and the list is reduced to $\{2,3,5,7\}$.

In order to prove this theorem, we follow the same strategy employed to prove Theorem~\ref{lem1}, namely, we start by showing that if $E$ is an elliptic curve satisfying the conditions of Theorem~\ref{qmaintheorem} and such that the image of $\bar{\rho}_{E,p}$ is contained in the normaliser of a non-split Cartan subgroup for some prime $p\geq 11$, then its $j$-invariant is integral.
\begin{prop}\label{int_j}
Let $E/\Q$ be an elliptic curve without complex multiplication. Suppose that there exists a prime $p\geq 11$ for which the image of the residual Galois representation $\bar{\rho}_{E,p}$ is contained in the normaliser of a non-split Cartan subgroup of $\GL_2(\F_p)$. Suppose, moreover, that there exists a prime $q$ different from $p$ such that the image of $\bar{\rho}_{E,q}$ is contained in the normaliser of a split Cartan subgroup of $\GL_2(\F_q)$. Then the $j$-invariant of $E$ is integral.
\end{prop}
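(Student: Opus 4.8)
\textbf{The plan.} The strategy mirrors the proof of the integrality part of Theorem~\ref{lem1} in~\cite{lem}, replacing the cyclic-isogeny hypothesis by the split Cartan hypothesis at $q$. Suppose for contradiction that $j(E)$ is not integral, so there is a prime $\ell$ at which $E$ has potentially multiplicative reduction. The hypothesis that $\im\bar\rho_{E,p}$ lies in the normaliser of a non-split Cartan gives strong local information at $\ell$: after a quadratic twist we may assume $E$ has multiplicative reduction at $\ell$, and then $\bar\rho_{E,p}|_{G_{\Q_\ell}}$ is (up to semisimplification) an extension of the trivial/cyclotomic character by its twist, which forces $p \mid v_\ell(j(E))$ as in the classical Frey--Mazur / Serre analysis, \emph{unless} $\ell = p$. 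So the only dangerous prime of potentially multiplicative reduction is $\ell = p$ itself, and one reduces to showing that $E$ cannot have potentially multiplicative reduction at $p$ while simultaneously satisfying both Cartan conditions.

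\textbf{The key steps.} First I would set up the local picture at $\ell = p$: potentially multiplicative reduction means $E$ is, over $\Q_p$, a quadratic twist of a Tate curve $E_Q$, and I would analyse $\bar\rho_{E,p}|_{I_p}$ on the wild inertia and tame quotients. Second, I would bring in the hypothesis at $q$: the mod $q$ image lies in the normaliser of a \emph{split} Cartan, which (by the theory of~\cite{bilparreb} and the modular interpretation) means $E$ gives rise to a rational point on the modular curve $X_{\mathrm{sp}}^+(q)$; combined with the non-split Cartan condition at $p$, the curve $E$ corresponds to a rational point on a fibre product of modular curves. Third — and this is where the real work lies — I would show that having potentially multiplicative reduction at $p$ is incompatible with the mod $p$ representation landing in the normaliser of a non-split Cartan: the Tate-curve description makes $\bar\rho_{E,p}|_{G_{\Q_p}}$ visibly reducible-up-to-the-unramified-quadratic-twist (an upper-triangular-up-to-twist shape), whereas an element of a non-split Cartan normaliser acting on $E[p]$ fixes no line and no pair of lines over $\F_p$; the tension is resolved only by controlling the image of the decomposition group $D_p$ inside the Cartan normaliser, using that $D_p$ surjects onto $\Gal(\F_p(\zeta_p)/\F_p)$ via the cyclotomic character and that the Tate parameter contributes a ramified part whose order is divisible by $p$ when $p \nmid v_p(j)$. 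Finally, I would feed in the split Cartan condition at $q$ to rule out the residual small cases, exactly as the $q$-isogeny hypothesis was used in~\cite{lem}.

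\textbf{The main obstacle.} The delicate point is the interaction at the single prime $\ell = p$, where the naive divisibility argument ($p \mid v_\ell(j)$) breaks down because the mod $p$ and $p$-adic cyclotomic characters coincide; one must instead extract the contradiction from the fine structure of $\bar\rho_{E,p}$ restricted to the decomposition group at $p$, playing the ramification coming from the Tate parameter against the rigidity of the non-split Cartan normaliser, and simultaneously ensuring the split Cartan hypothesis at $q$ is genuinely used to close off the cases where the mod $p$ image, though in the normaliser of a non-split Cartan, could still be consistent with potentially multiplicative reduction at $p$. I expect that, as in~\cite{lem}, this forces $p$ to be small (at most $7$ or $13$), contradicting $p \geq 11$ together with the known determination of which $q$ admit non-CM points on $X_{\mathrm{sp}}^+(q)$.
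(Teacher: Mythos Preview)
Your proposal has a genuine gap, and in fact identifies the difficulty in exactly the wrong place.

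The claim that for $\ell\neq p$ the non-split Cartan condition ``forces $p\mid v_\ell(j(E))$'' is correct (the normaliser of a non-split Cartan contains no nontrivial unipotent, so the tame inertia image must be trivial). But this does \emph{not} make $\ell$ harmless: $p\mid v_\ell(j(E))$ is perfectly compatible with $v_\ell(j(E))=-p,-2p,\ldots$, so you have not shown $v_\ell(j(E))\ge 0$. Your reduction ``the only dangerous prime is $\ell=p$'' is therefore unjustified. Conversely, the case $\ell=p$ that you single out as the ``main obstacle'' is actually the easy one: from the Tate-curve shape one gets $\chi_p^2|_{G_{\Q_p}}=1$, which for $p\ge 11$ is impossible since $[\Q_p(\zeta_p):\Q_p]=p-1>2$. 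This is exactly Proposition~\ref{goodp} in the paper, and it uses nothing about~$q$.

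The real work, which your proposal omits entirely, is to handle primes $\ell\neq p$ with $\ell\equiv\pm 1\pmod p$ (the only primes not already excluded by Proposition~\ref{goodp}). Here purely local analysis of $\bar\rho_{E,p}$ cannot suffice; the paper instead runs Mazur's formal immersion argument on the fibre-product modular curve $X_{\mathrm{sp,ns}}^{+,+}(q,p)=X_{\mathrm{sp}}^+(q)\times_{X(1)}X_{\mathrm{ns}}^+(p)$. One constructs a morphism from this curve to a nontrivial quotient $A$ of the Jacobian of $X_{0,\mathrm{ns}}^+(q,p)$ with $A(\Q)$ finite (this is where $q\in\{2,3,5,7,13\}$, i.e.\ $X_0(q)$ of genus~$0$, is used), proves it is a formal immersion at the cusp $\infty$ in every residue characteristic $\nmid 2qp$, and then argues that a rational point reducing to a cusp would have torsion image equal to $0$, forcing it to \emph{be} the cusp. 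The split Cartan hypothesis at $q$ is not a device ``to close off residual small cases'' at the end; it is what furnishes the modular curve and the finite-Mordell--Weil quotient on which the entire argument runs.
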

This result is proven an adaptation of Mazur's formal immersion argument (see~\cite{maz_eis,maz_rat}).

By Theorem~\ref{soa}, the only elliptic curves which could consitute a contradiction to Theorem~\ref{qmaintheorem} are those for which there exists a prime $p>37$ such that the image of $\bar{\rho}_{E,p}$ is contained in the normaliser of a non-split Cartan, and so they must all have integral $j$-invariants. Using explicit parametrisations of the $j$-invariant maps for $X_0(q)$, where $q$ is an element of the set $\{2,3,5,7\}$, we find out that there are only finitely many $\Q$-points of $X_0(q)$ with integral $j$-invariant. Moreover, we are able to compute all the possible $j$-invariants. As any two elliptic curves with the same $j$-invariant are related to each other by a quadratic twist as long as their $j$-invariant is not $0$ nor $1728$, surjectivity only depends on the $j$-invariant, and so our problem is reduced to computing the largest non-surjective prime for a finite set of elliptic curves.

The second part of this paper is devoted to $\Q$-curves. Let us just recall a few definitions before proceeding. Let $E$ be an elliptic curve defined over a Galois number field $K$. Given an element $\sigma\in\Gal(K/\Q)$, we will denote by ${}^{\sigma}E$ the Galois conjugate of $E$ by $\sigma$. Recall that $E$ is said to be a $\Q$-curve if, for each $\sigma\in\Gal(K/\Q)$, there exists an isogeny $\mu_{\sigma}:{}^{\sigma}E\rightarrow E$. If $E/K$ is a $\Q$-curve, we shall say that it is \emph{completely defined over $K$} if all of the isogenies $\mu_{\sigma}$ can be chosen in such a way that they are all defined over $K$. The main results of this paper make reference to some representations attached to $\Q$-curves that, following the notation introduced by Ellenberg~\cite{ellen_qcurv,ellen}, we will denote by $\proj\bar{\rho}_{E,p}$. Despite the notation, these are \emph{not}, in general, simply the projectivisations of $\bar{\rho}_{E,p}$ (the projectivisation of $\bar{\rho}_{E,p}$ is, by definition, the composition of $\bar{\rho}_{E,p}$ with the canonical projection $\GL_2(\F_p)\rightarrow\PGL_2(\F_p)$); in fact, $\proj\bar{\rho}_{E,p}$ is defined on the whole of $G_{\Q}$, and not only on $G_K$, where $K$ is the number field over which $E$ is defined. However, there is a close relation between $\proj\bar{\rho}_{E,p}$ and $\bar{\rho}_{E,p}$: if $P\bar{\rho}_{E,p}$ stands for the projectivisation of $\bar{\rho}_{E,p}$, then $P\bar{\rho}_{E,p}$ is isomorphic to $\proj\bar{\rho}_{E,p}|_{G_K}$. For a brief review of the definition of $\proj\bar{\rho}_{E,p}$, we refer the reader to section~\ref{review}. When $K$ is a quadratic field, we say that a $\Q$-curve completely defined over $K$ is of \emph{degree $d$} if there exists an isogeny $\mu_{\sigma}:{}^{\sigma} E\rightarrow E$ defined over $K$ and of degree $d$ and there exists no other isogeny between ${}^{\sigma}E$ and $E$ of smaller degree, where $\sigma\in\Gal(K/\Q)$ is the non-trivial element.

The main objective of the second part of the paper is to prove the following results (which are analogues of Theorem~\ref{lem1} and Theorem~\ref{qmaintheorem}).

\begin{thm}\label{borelmain}
Let $K$ be a quadratic field and let $d$ be a square-free integer. There exists a constant $C_{K,d}$ satisfying the following property. If $E$ is a $\Q$-curve completely defined over $K$, of degree $d$, without complex multiplication and for which there exists a prime $q\nmid d$ such that the image of $\proj\bar{\rho}_{E,q}$ is contained in a Borel subgroup of $\PGL_2(\F_q)$, then $\proj\bar{\rho}_{E,p}$ surjects onto $\PGL_2(\F_p)$ for every $p>C_{K,d}$. 
\end{thm}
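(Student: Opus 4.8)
The plan is to mimic, in the $\Q$-curve setting, the two-step strategy used for Theorem~\ref{lem1} and Theorem~\ref{qmaintheorem}: first establish an integrality statement for a suitably normalised $j$-invariant attached to $E$, and then combine it with a finiteness result for the relevant modular curve. Recall that a $\Q$-curve of degree $d$ completely defined over a quadratic field $K$ corresponds to a $K$-point (in fact, a $\Q$-rational point on a quotient) of the modular curve $X_0(d)$, or rather of a twisted/quotient variant $X_0^*(d)$ or $X_{split}(d)$-type curve parametrising such objects; the hypothesis that $\proj\bar\rho_{E,q}$ has image in a Borel subgroup of $\PGL_2(\F_q)$ for some $q\nmid d$ means that $E$ also carries a $\Q$-rationally-defined isogeny structure at $q$, so the associated point lies on a fibre product of two such modular curves over the $j$-line (one recording the degree-$d$ $\Q$-curve structure, one recording the level-$q$ Borel structure).

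The first main step is to prove the analogue of Proposition~\ref{int_j}: under the stated hypotheses, the $j$-invariant of $E$ (equivalently, the image of the corresponding point under the $j$-map) is integral, i.e. the point reduces well away from $d$ and $q$. As in Mazur's method, I would run a formal immersion argument: consider the relevant modular curve $X$ (a fibre product recording the $\Q$-curve-of-degree-$d$ structure together with the Borel-at-$q$ structure, or an Atkin–Lehner quotient thereof) together with a well-chosen map to a quotient of its Jacobian with finite Mordell–Weil group, and show that the corresponding map on formal completions at a cusp (in the relevant characteristic) is a formal immersion. Non-integrality of $j$ would force the point to reduce to a cusp modulo some prime $\ell\nmid dq$; the formal immersion then forces the point to coincide with that cusp, contradicting the fact that it comes from an elliptic curve with, say, potentially good or multiplicative reduction of a type incompatible with the cusp. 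This is where Ellenberg's and Le~Fourn's results enter: Ellenberg~\cite{ellen_qcurv,ellen} provides the framework relating $\proj\bar\rho_{E,p}$ to points on modular curves and Jacobian quotients with controlled rank, and Le~Fourn's work supplies the needed effective control (Runge-type or formal-immersion input) making the argument go through uniformly in $p$. I expect this integrality step to be the main obstacle, since one must check the formal immersion hypothesis — a rank and non-vanishing condition on certain $q$-expansions/differentials at the cusps — uniformly, and one must arrange that the Jacobian quotient has finite Mordell–Weil group over $\Q$ (or at least rank $0$ after the appropriate twist), which is exactly the delicate point that forces the dependence of the constant on $K$ and $d$.

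The second step is to extract from integrality a finiteness statement. Once we know that any potential counterexample $E$ has integral $j$-invariant and lies on $X_0(q)$ (via the Borel-at-$q$ hypothesis), and also corresponds to a point on the degree-$d$ $\Q$-curve modular curve, we use explicit parametrisations of the $j$-maps — exactly as in the $\Q$ case, where one knows $q\in\{2,3,5,7\}$ by Bilu–Parent–Rebolledo and the $j$-map on $X_0(q)$ is explicit — to conclude that there are only finitely many possibilities for $j(E)$ among those with integral $j$-invariant. Here one must be slightly careful: in the $\Q$-curve setting the prime $q$ is not a priori restricted to a small finite set, but the Borel-at-$q$ hypothesis together with the integrality and the uniform bounds of Ellenberg/Le~Fourn should bound $q$ (and hence reduce to finitely many $X_0(q)$), or alternatively one absorbs this into the constant $C_{K,d}$. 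Either way, we end up with a finite set of $\bar\Q$-isomorphism classes of elliptic curves (finitely many $j$-invariants), and since surjectivity of $\proj\bar\rho_{E,p}$ depends only on the $j$-invariant for $j\neq 0,1728$ (as quadratic twisting does not change the projective image), Serre's Theorem~\ref{serthm} applied to this finite list yields a finite bound; taking $C_{K,d}$ to be the maximum of that bound and the bound coming from the formal immersion step completes the proof.

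I should also flag two technical points to be handled en route. First, one must be careful about the finitely many primes $p$ dividing $d$ or equal to $q$, and about $p\le$ some absolute bound (the analogue of $37$): for these the statement is vacuous after enlarging $C_{K,d}$, so they cause no trouble. Second, the passage from ``image of $\proj\bar\rho_{E,p}$ not surjective'' to ``image contained in the normaliser of a Cartan subgroup'' (the input that makes the modular-curve interpretation available) requires the classification of maximal subgroups of $\PGL_2(\F_p)$ together with the fact that, for $p$ large, the exceptional (dihedral/$A_4,S_4,A_5$) and Borel cases are ruled out or already covered — this is standard but must be invoked uniformly, and is again absorbed into $C_{K,d}$.
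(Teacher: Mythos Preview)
Your outline follows the paper's strategy in broad strokes --- integrality via formal immersion, then finiteness --- but there is one genuine missing ingredient that you treat as routine when it is in fact the heart of the matter.

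The key gap is the uniform bound on the Borel prime. You need it twice: once to reduce the auxiliary prime $q$ to a finite list depending only on $(K,d)$, and once to rule out the Borel case for $\proj\bar\rho_{E,p}$ itself when $p$ is large. You write that this ``is standard'' and can be ``absorbed into $C_{K,d}$'', and you suggest that ``Ellenberg/Le~Fourn should bound $q$''. But over a general quadratic field (especially a real one) there is no off-the-shelf Mazur-type isogeny theorem for $\Q$-curves: Le~Fourn's bounds either depend on the curve or require $K$ imaginary. The paper proves exactly this statement as a separate theorem (Theorem~\ref{noborel}), via a non-trivial adaptation of Mazur's arguments from \cite{maz_rat} to the $\Q$-curve setting, passing through the Weil restriction $A=\Res_{K/\Q}E$ and its $\GL_2$-type structure. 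Without this input, you cannot reduce to finitely many $q$, and hence cannot extract a single constant $C_{K,d}$ from Serre's theorem at the end; ``absorbing into the constant'' is not an option, because the constant you would produce would depend on $q$, which is not fixed.

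Two smaller points. First, once $q$ is bounded you still cannot use explicit parametrisations of $X_0(q)$ as in the rational case, since $q$ need not lie in $\{2,3,5,7\}$; the paper instead applies Siegel's theorem to the fibre product $X_0(d)\times_{X(1)}X_0(q)$, which has at least three cusps. Second, your integrality step (the analogue of Proposition~\ref{int_j}) only covers the non-split Cartan case at $p$; for the split Cartan case one needs Le~Fourn's Proposition~\ref{cartan_main} as a separate input, which you do not mention.
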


\begin{thm}\label{cartanmain}
Let $K$ be a quadratic field and let $d\notin\{2,3,5,7,13\}$ be a square-free integer. There exists a constant $C_{K,d}$ satisfying the following property. If $E$ is a $\Q$-curve completely defined over $K$, of degree $d$, without complex multiplication and for which there exists a prime $q\nmid d$ such that the image of $\proj\bar{\rho}_{E,q}$ is contained in the normaliser of a split Cartan subgroup of $\PGL_2(\F_q)$, then $\proj\bar{\rho}_{E,p}$ surjects onto $\PGL_2(\F_p)$ for every $p>C_{K,d}$.
\end{thm}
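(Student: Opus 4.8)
The proof runs parallel to that of Theorem~\ref{qmaintheorem}, with the classical modular curves over $\Q$ replaced by the Atkin--Lehner quotients that govern $\Q$-curves, with Theorem~\ref{soa} replaced by Ellenberg's classification of the images of $\proj\bar\rho_{E,p}$, and with Proposition~\ref{int_j} replaced by its $\Q$-curve analogue. First, for a $\Q$-curve $E$ completely defined over $K$, of degree $d$ and without complex multiplication, the results of Ellenberg together with those of Le~Fourn supply a constant $C_1=C_1(K,d)$ such that, for $p>C_1$, non-surjectivity of $\proj\bar\rho_{E,p}$ forces its image into the normaliser of a non-split Cartan subgroup of $\PGL_2(\F_p)$: the exceptional (``dihedral-free'') subgroups are of bounded order and cannot occur for $p$ large since $E$ has no complex multiplication, while the Borel case and the normaliser-of-split-Cartan case at $p$ are precisely what Le~Fourn's results exclude for $p$ large (this is where the hypothesis $d\notin\{2,3,5,7,13\}$, inherited from the Bilu--Parent--Rebolledo analysis underlying the treatment of the split Cartan, first intervenes). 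It therefore remains to bound, in terms of $K$ and $d$, the primes $p$ for which $\proj\bar\rho_{E,p}$ has image in the normaliser of a non-split Cartan, given the existence of a prime $q\nmid d$ with $\proj\bar\rho_{E,q}$ having image in the normaliser of a split Cartan.

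The core is the $\Q$-curve analogue of Proposition~\ref{int_j}: under these hypotheses, with $p$ large, the $j$-invariant of $E$ is an algebraic integer, so that $E$ has potentially good reduction at every finite place of $K$. This is obtained by adapting Mazur's formal immersion argument in the style of Ellenberg and Le~Fourn. One records the degree-$d$ $\Q$-curve structure, the non-split-Cartan structure at $p$ and the split-Cartan structure at $q$ as a $\Q$-rational point $x$ on a modular curve $X$ cut out inside a quotient of $X_0(d\,p^2 q^2)$ by a suitable group of Atkin--Lehner involutions (equivalently, an Atkin--Lehner twist of the fibre product $X_0(d)\times_{X(1)}X_{\mathrm{ns}}^+(p)\times_{X(1)}X_{\mathrm{split}}^+(q)$); this curve is defined over $\Q$ precisely because $\proj\bar\rho_{E,\bullet}$ lives on $G_\Q$. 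Were $j(E)$ non-integral, then for a well-chosen auxiliary prime $\ell$ the point $x$ would reduce modulo $\ell$ to a cusp of $X$. One then checks that the morphism $\phi$ from $X$ to a suitable rank-zero quotient $A$ of its Jacobian (a winding, or Eisenstein, quotient, whose finiteness over $\Q$ follows from its vanishing analytic rank via the theorem of Kato or of Kolyvagin--Logachev) is a formal immersion along that cusp in characteristic $\ell$. Then $\phi(x)$ lies in the finite set $A(\Q)$ and has the same reduction modulo $\ell$ as $\phi(\mathrm{cusp})$, so the formal immersion property forces $x$ to be that cusp --- a contradiction. Establishing the formal immersion condition and the rank-zero statement for the relevant quotient, uniformly in the auxiliary data, is the main obstacle; it is here that squarefreeness of $d$ and the exclusion $d\notin\{2,3,5,7,13\}$ are used again.

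The proof then closes as in the split-Cartan case over $\Q$. Fix $p>C_1$ and suppose $\proj\bar\rho_{E,p}$ is not surjective; by the first paragraph its image lies in the normaliser of a non-split Cartan, so by the previous paragraph $j(E)$ is an algebraic integer. The hypothesis that $\proj\bar\rho_{E,q}$ has image in the normaliser of a split Cartan for some $q\nmid d$ now presents $E$ as an \emph{integral} point of a modular curve $\mathcal C_q$ of the shape $X_0(dq^2)/W$ with $W$ a group of Atkin--Lehner involutions --- that is, of the affine curve obtained by deleting its cusps. Moreover the prime $q$ itself ranges over a finite set depending only on $K$ and $d$, since for $q$ large the split-Cartan structure is already excluded by Le~Fourn. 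For each such $q$, Siegel's theorem --- or, for the finitely many small $q$ with $\mathcal C_q$ of genus zero, an explicit parametrisation of the $j$-line exactly as in the proof of Theorem~\ref{qmaintheorem} --- leaves only finitely many possible values of $j(E)$. Finally, among non-CM $\Q$-curves completely defined over $K$ with $j$-invariant different from $0$ and $1728$, surjectivity of $\proj\bar\rho_{E,p}$ depends only on the $j$-invariant (two such curves differing by a quadratic twist, to which $\proj\bar\rho$ is insensitive); so by Theorem~\ref{serthm} applied to one representative of each of these finitely many $j$-invariants --- the values $0$ and $1728$ being handled separately --- we obtain surjectivity for all $p$ outside a finite set. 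Setting $C_{K,d}$ to be the largest prime occurring among all these bounds completes the proof.
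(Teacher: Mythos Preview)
Your overall strategy is in the right spirit, but the paper's proof is considerably shorter, and your version has a genuine gap together with a misplacement of the key hypothesis.

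The main difference is in the final step. You pass to curves $\mathcal C_q=X_0(dq^2)/W$ and then try to bound $q$; the paper instead uses the curve $X_0(d)$ by itself. Once $j(E)\in\loc_K$ is known, the degree-$d$ structure already gives a $K$-rational point on $X_0(d)$ with integral $j$-invariant, and the hypothesis $d\notin\{2,3,5,7,13\}$ is used \emph{precisely here}: for square-free $d$ outside that set, $X_0(d)$ has genus $\geq 1$ (so Faltings applies) or, in the remaining genus-zero cases, $d$ is a product of two distinct primes and $X_0(d)$ has at least three cusps (so Siegel applies). This is the sole role of that hypothesis in the paper; it has nothing to do with Bilu--Parent--Rebolledo or with the formal immersion step, contrary to what you write in your first and second paragraphs.

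The gap: in both paragraph~1 and paragraph~3 you assert that Le~Fourn's results \emph{exclude} the normaliser-of-split-Cartan case at a large prime. That is not what Le~Fourn proves. His Proposition~3.6 (Proposition~\ref{cartan_main} in the paper) says only that split-Cartan image at a prime $p=11$ or $p>13$ forces $j(E)\in\loc_K$; it does not rule the case out. Hence your claim that $q$ ranges over a finite set is unjustified, and so is your reduction in paragraph~1 to the non-split Cartan case at $p$. The paper does not need to bound $q$ at all: in the split-Cartan-at-$p$ subcase Le~Fourn gives integrality of $j$ directly, in the non-split-Cartan-at-$p$ subcase one needs the $\Q$-curve analogue of Proposition~\ref{int_j} (your paragraph~2 is essentially correct here and spells out what the paper compresses into ``by the same arguments employed in the proof of Theorem~\ref{borelmain}''), and the Borel and exceptional subcases are handled by Theorem~\ref{noborel} and Serre's lemma exactly as in the proof of Theorem~\ref{borelmain}. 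After that, the finiteness on $X_0(d)$ and Serre's Theorem~\ref{serthm} finish the argument.
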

Most of the proof of these two theorems will use arguments of the same type of those used to prove Theorem~\ref{qmaintheorem} and described above. In particular, borrowing some ideas of Ellenberg~\cite{ellen}, we will show the following.
\begin{prop}\label{borel_main}
Let $K$ be a quadratic number field and let $d$ be a square-free positive integer. Let $E$ be a $\Q$-curve completely defined over $K$, of degree $d$ and without complex multiplication. Suppose that $p$ and $q$ are distinct primes not dividing $d$ such that the image of $\proj\bar{\rho}_{E,p}$ is contained in the normaliser of a non-split Cartan subgroup of $\PGL_2(\F_p)$ and that the image of $\proj\bar{\rho}_{E,q}$ is contained in a Borel subgroup of $\PGL_2(\F_q)$. Suppose, moreover, that $p\geq 11$. Then the $j$-invariant of $E$ is in $\loc_K$, where $\loc_K$ stands for the ring of integers of $K$.
\end{prop}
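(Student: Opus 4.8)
The plan is to adapt Mazur's formal immersion argument — in the form used to prove Proposition \ref{int_j} — to the setting of $\Q$-curves, following Ellenberg's framework. Let me think about what the natural moduli interpretation is. A $\Q$-curve $E$ completely defined over the quadratic field $K$, of degree $d$, together with the two conditions on $\proj\bar\rho_{E,p}$ and $\proj\bar\rho_{E,q}$, should correspond to a $\Q$-rational point on a modular curve parametrising such data. Concretely, since $E$ has a degree-$d$ isogeny to its Galois conjugate, the pair $(E, {}^\sigma E)$ with the isogeny $\mu_\sigma$ gives a point on $X_0(d)$ over $K$ that is mapped to its complex conjugate by the action of $\sigma$ followed by the Atkin–Lehner involution $w_d$; equivalently, $E$ corresponds to a $\Q$-rational point on the quotient $X_0(d)/w_d$ (or rather on a twisted form thereof). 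The Borel condition at $q$ adds $\Gamma_0(q)$-level structure, and the non-split Cartan condition at $p$ adds $\Gamma$-structure for the normaliser of a non-split Cartan. So the relevant object is a $\Q$-rational point on a curve $\modX$ covering $X_{\mathrm{ns}}^+(p) \times X_0(dq)$-type data, descended appropriately. The strategy is then: assume $E$ has non-integral $j$-invariant, so that the corresponding point reduces to a cusp modulo some prime $\ell$ of $K$; pull this up to an auxiliary prime and derive a contradiction from a formal immersion statement at the cusp.

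Here is how I would carry out the steps. First, I would set up the precise modular curve $\modX$ and the map to a quotient of $X_0(N)$ (for $N$ a product involving $d$, $q$, and a power of $p$) whose $\Q$-points encode exactly the hypotheses, using Ellenberg's analysis of $\proj\bar\rho$ for $\Q$-curves to translate the Galois-theoretic conditions into level structures; the key input is that $\proj\bar\rho_{E,q}$ landing in a Borel corresponds to a rational cyclic $q$-isogeny structure compatible with the $\Q$-curve data, and $\proj\bar\rho_{E,p}$ landing in a non-split Cartan normaliser corresponds to $X_{\mathrm{ns}}^+(p)$-structure. Second, suppose for contradiction that $j(E) \notin \loc_K$; then there is a prime $\idp$ of $K$ above some rational prime $\ell$ (which, by potential good/multiplicative reduction analysis and the $q$- and $p$-structures, can be forced away from $p$ and $q$, and to have residue characteristic controlled) at which $E$ has potentially multiplicative reduction, so the associated point $x \in \modX(\Q)$ — wait, the point lives over $K$, so I would instead look at the image point on the quotient that is genuinely $\Q$-rational — specializes to a cusp on the special fibre. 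Third, I would invoke (or prove, by an Eisenstein-quotient / winding-quotient argument as in \cite{maz_eis, maz_rat, lem}) that the relevant Jacobian quotient has finite Mordell–Weil group, so the section through $x$ meets the cuspidal section; and fourth, I would establish that the composite morphism from $\modX$ to this quotient is a formal immersion at the cusp in characteristic $\ell$, which forces $x$ to equal the cusp — contradicting that $E$ has a $j$-invariant (the cusp corresponds to $j = \infty$, i.e.\ $E$ is not actually an elliptic curve). The formal immersion is checked exactly as in Mazur and in the proof of Proposition \ref{int_j}, by computing $q$-expansions of a basis of the relevant differentials and reducing mod $\ell$.

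The main obstacle, I expect, is the descent/twisting bookkeeping needed because $E$ is a $\Q$-curve rather than an elliptic curve over $\Q$: the point attached to $E$ is naturally $K$-rational, not $\Q$-rational, and one must pass to the correct $\Q$-rational quotient (a quotient of $X_0(N)$ by an Atkin–Lehner-type involution, possibly twisted) so that Mazur's method — which crucially needs a $\Q$-rational point and a $\Q$-rational cusp together with a modular Jacobian quotient of rank zero over $\Q$ — can be applied. Here Ellenberg's results (as cited in the excerpt) supply the dictionary between $\proj\bar\rho_{E,p}$ and these quotient curves, and that is what makes the argument go through; the rank-zero input for the Eisenstein-type quotient over $\Q$ at level involving $p$ will require the analysis of \cite{maz_eis} together with the observation that the extra $\Gamma_0(dq)$-level and the $w_d$-quotient do not spoil the relevant quotient being of Eisenstein type. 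Once the $\Q$-rational model is correctly identified, the formal immersion computation and the reduction-at-a-cusp argument are routine adaptations of Proposition \ref{int_j} and \cite{lem}, and the hypothesis $p \geq 11$ enters exactly where it does there, to ensure $X_{\mathrm{ns}}^+(p)$ has the needed properties (genus and a suitable quotient of positive rank-zero type).
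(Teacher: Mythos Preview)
Your proposal is essentially the paper's approach, and you have correctly identified the main obstacle (descending from a $K$-point to a $\Q$-point via an Atkin--Lehner twist) and the main engine (formal immersion into a rank-zero quotient). Two points where the paper's execution diverges from your sketch are worth flagging. First, the paper does not pass to a \emph{quotient} by $w_d$ but to the quadratic \emph{twist} $Z_{d,0}^\psi(q,p)$ of $Z_{d,0}(q,p):=X_0(d)\times_{X(1)}X_{0,\mathrm{ns}}^+(q,p)$ by the cocycle $\sigma\mapsto w_d$; the map into the winding quotient $A$ is then the symmetrized map $h=h_1+h_2$ with $h_i=f\circ\delta\circ w_d^{i-1}\circ\varphi^{-1}$, and this $h$ is only defined over $L=K(\zeta_p+\zeta_p^{-1})$, not over $\Q$. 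In particular the cusp $\infty$ is \emph{not} $\Q$-rational, so your framing ``Mazur's method crucially needs a $\Q$-rational cusp'' does not literally apply: the paper instead shows $h(P)$ is torsion by observing that ${}^\tau(h(P))-h(P)$ is a cuspidal divisor for every $\tau\in G_\Q$ and invoking Manin--Drinfeld, so that some multiple of $h(P)$ lands in the finite group $A(\Q)$. Second, the formal-immersion argument is only run for $q\in\{2,3,5,7,13\}$ (where the Darmon--Merel input is available); for $q\ge 11$, $q\ne 13$, the paper simply cites Ellenberg's Proposition~3.2 to get potentially good reduction away from primes above $2$ and $3$, and then Proposition~\ref{goodp} disposes of those remaining primes.
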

We remark that if $q\geq 11$ and $q\neq 13,17,41$, a much stronger result has been proven by Le Fourn~{\cite[Proposition~3.3]{lefourn}}. For the proof of Theorem~\ref{cartanmain}, we will actually use the following result from~\cite{lefourn}.
\begin{prop}[{\cite[Proposition~3.6]{lefourn}}]\label{cartan_main}
Let $K$ be a quadratic field and let $p=11$ or $p>13$ be a prime. Suppose that $E$ is a $\Q$-curve of square-free degree $d$ coprime to $p$ such that the image of $\proj\bar{\rho}_{E,p}$ is contained in the normaliser of a split Cartan subgroup. Then $j(E)\in\loc_K$.
\end{prop}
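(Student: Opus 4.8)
The plan is to run Mazur's formal immersion argument, in the style of the proof of Proposition~\ref{int_j}, but now over the quadratic base field $K$ and on a modular curve classifying $\Q$-curves of degree $d$ equipped with a split-Cartan level-$p$ structure. Assume for contradiction that $j(E)\notin\loc_K$; then $E$ has potentially multiplicative reduction at some prime $\mathfrak{q}$ of $\loc_K$ lying over a rational prime $\ell$. The first step is to extract the local consequence of the hypotheses at $\mathfrak{q}$: the image of the decomposition group $D_{\mathfrak{q}}$ under $\bar{\rho}_{E,p}$, read projectively inside the image of $\proj\bar{\rho}_{E,p}$, lies simultaneously in a Borel subgroup (this is what potential multiplicative reduction gives) and in the normaliser of a split Cartan. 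In $\PGL_2(\F_p)$ the intersection of a Borel with the normaliser of a split Cartan is the split Cartan itself, so $\proj\bar{\rho}_{E,p}(D_{\mathfrak{q}})$ is contained in a (diagonal) split Cartan; feeding this into the Tate-curve description of $\bar{\rho}_{E,p}\big|_{D_{\mathfrak{q}}}$ — whose off-diagonal entry is the Kummer class of the Tate parameter — forces $p\mid v_{\mathfrak{q}}(j(E))$, i.e. the depth of the potential multiplicative reduction is divisible by $p$.

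Next I would package the data of $E$ as a $\Q$-rational point on a modular curve. Since $E$ is a $\Q$-curve of degree $d$ over $K$, the cyclic $d$-isogeny ${}^{\sigma}E\to E$ gives a $K$-point of $X_0(d)$ whose image on $X_0(d)/w_d$ is $\Q$-rational; and since $\proj\bar{\rho}_{E,p}$ is a representation of all of $G_{\Q}$ with image in the normaliser of a split Cartan, the associated unordered pair of order-$p$ subgroup schemes is $G_{\Q}$-stable. Assembling these, $E$ determines a $\Q$-rational point $P$ on a curve $\modX$ over $\Q$, namely the quotient of $X_0(d)\times_{X(1)}X_{\mathrm{s}}(p)$ by the Atkin--Lehner and split-Cartan-normaliser involutions implicit above; via the isomorphism $X_{\mathrm{s}}(p)\cong X_0(p^2)$ and $p\nmid d$, the curve $\modX$ is dominated by $X_0(dp^2)$. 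Because $\gcd(\ell,p)=1$ and $E$ has potentially multiplicative reduction at $\mathfrak{q}$, the point $P$ reduces modulo $\ell$ to a cusp of $\modX$, and the divisibility $p\mid v_{\mathfrak{q}}(j(E))$ is precisely the constraint ensuring that this reduction lands at (and interacts correctly with the chosen formal parametrisation at) the cusp coming from the split degeneration at $p$, which is defined over a controlled extension of $\F_{\ell}$.

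The heart of the argument is then the standard Mazur dichotomy applied to $\modX$. One chooses a quotient $J'$ of the relevant new part of $\Jac(\modX)$ — an isogeny factor of $J_0(dp^2)$ — having Mordell--Weil rank $0$ over $\Q$, and verifies that the composite of the formal parametrisation of $\modX$ at the split cusp with the map $\modX\to J'$ is a formal immersion over $\Z_{(\ell)}$ for all $\ell\neq p$ outside an explicit finite set of small primes. Granting this, the image of $P$ in $J'(\Q)$ is torsion and is congruent modulo $\ell$ to the image of the cusp, so the formal immersion forces $P$ to equal that cusp, contradicting the fact that $E$ is a genuine non-CM elliptic curve. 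Hence $j(E)\in\loc_K$.

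The main obstacle, absorbing essentially all of the work, is the last step: producing the rank-zero quotient $J'$ and checking the formal immersion. This requires (i) an analytic-rank-zero statement for a suitable newform quotient of $J_0(dp^2)$ — which is exactly where the primes $p\in\{2,3,5,7,13\}$ must be excluded (for those $X_{\mathrm{s}}^{+}(p)$ has genus $0$ or $1$, or, for $p=13$, is the exceptional curve with non-cuspidal rational points) — together with (ii) a $q$-expansion computation of differentials at the split cusp showing that the relevant Jacobian matrix has full rank modulo $\ell$ for $\ell$ outside a small controllable set. By contrast, the two preparatory steps — the modular interpretation of the $\Q$-curve data and the Tate-curve local computation pinning down the reduction — are routine adaptations of the arguments behind Proposition~\ref{int_j}. (For small values of $p$ one can substitute a Runge-type argument à la Bilu--Parent for the rank-zero input, but for the stated range $p=11$ or $p>13$ the quotient approach suffices.)
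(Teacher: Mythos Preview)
The paper does not contain a proof of this proposition: it is quoted from Le~Fourn \cite{lefourn} as an external input (hence the citation in the proposition header) and is used as a black box in the proofs of Theorems~\ref{borelmain} and~\ref{cartanmain}. There is therefore no proof in the present paper against which to compare your proposal.

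On the proposal itself: your outline is a plausible Mazur-style strategy and is in the spirit of what the paper does for the non-split case (Proposition~\ref{int_j}) and for Proposition~\ref{borel_main}, but you explicitly and correctly flag that the entire content of the result sits in the final step --- exhibiting a rank-zero quotient of the relevant Jacobian and verifying the formal immersion at the appropriate cusp --- and you have not carried this out. Two smaller points. First, the $\Q$-rational point does not naturally live on a \emph{quotient} of $X_0(d)\times_{X(1)}X_{\mathrm{sp}}(p)$ by $w_d$ as you write; rather, following Ellenberg (and as the paper does before Lemma~\ref{form_im1}), one takes the $w_d$-\emph{twist} by $K/\Q$ of $X_0(d)\times_{X(1)}X_{\mathrm{sp}}^+(p)$. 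Second, the divisibility $p\mid v_{\mathfrak{q}}(j(E))$ that you extract from the Tate curve is correct but does not ``single out'' a cusp in the way you suggest: every cusp of $X_{\mathrm{sp}}^+(p)$ already has ramification index $p$ over $j=\infty$, so this is automatic. For what it is worth, Le~Fourn's actual argument blends Mazur's method with Runge-type bounds; your closing parenthetical is closer to his approach than the body of your sketch.
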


Finally, we would like to mention a theorem that will be used as an auxiliary result in the proof of Theorem~\ref{borelmain}, but which is interesting in its own right.
\begin{thm}\label{noborel}
Let $K$ be a quadratic number field and $d$ a positive square-free integer. There exists a constant $C_{K,d}$ satisfying the following property. Let $E/K$ be a $\Q$-curve completely defined over $K$, of degree $d$ and without complex multiplication. If $p\nmid d$ is a prime for which the image of $\proj\bar{\rho}_{E,p}$ is contained in a Borel subgroup of $\PGL_2(\F_p)$, then $p\leq C_{K,d}$. Moreover, if we restrict ourselves to the case where $p\equiv 1\pmod{4}$, then the constant $C_{K,d}$ can be chosen to be
\begin{equation*}
2^{6fc+1}(2^{6fc}+1),
\end{equation*}
where $c$ is the narrow class number of $K$ and $f$ is the residual degree of a prime of $K$ lying above~$2$ (which is independent of the prime above $2$ chosen). In particular, when $p\equiv 1\pmod{4}$, the constant $C_{K,d}$ is actually independent from $d$.
\end{thm}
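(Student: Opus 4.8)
The plan is to reduce the statement to a rigidity property of the isogeny character of the $K$-rational $p$-isogeny forced by the hypothesis, and then to play the resulting mod-$p$ congruence against the Hasse--Weil bound at a prime above~$2$.

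\emph{The isogeny character.} Since $\proj\bar{\rho}_{E,p}|_{G_K}$ is (isomorphic to) the projectivisation of $\bar{\rho}_{E,p}$, the hypothesis forces $\bar{\rho}_{E,p}$ itself to be reducible; let $C\subset E[p]$ be the resulting $G_K$-stable line and $\alpha\colon G_K\to\F_p^*$ the character through which $G_K$ acts on~$C$, so that $\bar{\rho}_{E,p}\sim\left(\begin{smallmatrix}\alpha&*\\0&\alpha^{-1}\chi_p\end{smallmatrix}\right)$ with $\chi_p$ the mod~$p$ cyclotomic character. Because $\proj\bar{\rho}_{E,p}$ is defined on all of $G_{\Q}$ with image in a Borel, the point of $\mathbb{P}(E[p])$ determined by $C$ is fixed by the whole action; unwinding the definition of $\proj\bar{\rho}_{E,p}$ and using $p\nmid d$ (so that the isogenies $\mu_{\sigma}$ are injective on $p$-torsion) this says precisely that $\mu_{\sigma}\colon{}^{\sigma}E\to E$ carries ${}^{\sigma}C$ onto~$C$, where $\sigma$ generates $\Gal(K/\Q)$. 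As $\mu_{\sigma}$ is defined over~$K$, the character $\alpha$ is $\sigma$-invariant; moreover $E'=E/C$ is again a $\Q$-curve completely defined over~$K$, of degree~$d$, with a $K$-rational $p$-isogeny, and its isogeny character is $\alpha^{-1}\chi_p$.

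\emph{Pinning down a power of~$\alpha$ (the main step).} Next I would run the Momose--Mazur-style local analysis, adapted to $\Q$-curves along the lines of Ellenberg's work, to control a fixed power of~$\alpha$. Away from~$p$ the image of inertia lies in~$\F_p^*$ and is automatically tame; it is trivial at primes of potentially multiplicative reduction, and at the remaining bad primes — including those dividing~$d$ and $\mathrm{disc}\,K$, where the relevant inertia image is governed by the Atkin--Lehner involutions $w_\ell$ — the hypothesis $p\equiv 1\pmod 4$ is exactly what forces its contribution to disappear after passing to~$\alpha^{6}$ (this is where the final bound becomes independent of~$d$). At~$p$, reducibility of $\bar{\rho}_{E,p}$ rules out supersingular reduction and, together with the same analysis applied to~$E'$, pins $\alpha^{6}|_{I_p}$ down to $\chi_p^{6\delta}|_{I_p}$ with $\delta\in\{0,1\}$. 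Hence $\alpha^{6}\chi_p^{-6\delta}$ is unramified at every finite place of~$K$, so it factors through the narrow class group; writing $c=\#\Cl^+(K)$ we get $\alpha^{6c}=\chi_p^{6c\delta}$ as characters of~$G_K$. (For $p\not\equiv 1\pmod 4$ one still obtains such an identity, but with an exponent and a $d$-part that depend on~$d$, producing the $d$-dependent constant of the first assertion.)

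\emph{The congruence and the Hasse bound.} Let $\mathfrak{q}$ be a prime of~$K$ above~$2$, of residue degree~$f$, chosen so that $E$ has potentially good reduction there; after a totally ramified base change $E$ acquires good reduction at (the prime above)~$\mathfrak{q}$, with reduction over~$\F_{2^{f}}$ and a Frobenius eigenvalue~$\beta$ — an algebraic integer lying in a field of degree at most~$2$ with $|\beta|^{2}=2^{f}$. Since $\alpha^{6}$ is unramified at~$\mathfrak{q}$ even though $\alpha$ need not be, we get $\alpha^{6}(\Frob_{\mathfrak{q}})\equiv\beta^{6}\pmod{\mathfrak{P}}$ for a prime $\mathfrak{P}\mid p$, and combining with the identity above,
\begin{equation*}
\beta^{6c}\equiv\chi_p^{6c\delta}(\Frob_{\mathfrak{q}})=2^{6fc\delta}\pmod{\mathfrak{P}}.
\end{equation*}
The two sides are distinct algebraic integers, their archimedean absolute values being $2^{3fc}$ and $2^{6fc\delta}\in\{1,2^{6fc}\}$; hence $p$ divides the norm of their non-zero difference, which by the Hasse bound is at most $\bigl(2^{3fc}+2^{6fc}\bigr)^{2}\le 2^{6fc+1}\bigl(2^{6fc}+1\bigr)$. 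This is the asserted constant. (If instead $E$ has potentially multiplicative reduction at every prime above~$2$, the relevant isogeny character — of~$E$ or of~$E/C$ — is already unramified at~$\mathfrak{q}$ with value an explicit power of~$2^{f}$, and a short separate argument yields the same bound.) The hardest part will be the middle step: proving $\alpha^{6c}=\chi_p^{6c\delta}$, and in particular verifying that the hypothesis $p\equiv 1\pmod 4$ suffices to make the exponent — and hence the constant — independent of~$d$.
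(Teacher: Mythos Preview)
Your overall strategy---extract the isogeny character, run a Mazur--Momose local analysis to control a fixed power of it, and then play the resulting congruence at a prime above~$2$ against the Hasse--Weil bound---is the same as the paper's. But the crucial ``main step'' is where the proposal breaks down.

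The claim that $\alpha^{6}|_{I_p}=\chi_p^{6\delta}|_{I_p}$ with $\delta\in\{0,1\}$ is not correct as stated, for two reasons. First, the exponent~$6$ is too small. The paper writes your $\alpha$ as $\alpha_0\chi_p^{k}$ with $\alpha_0$ unramified at~$p$, and via Raynaud's theorem (after passing to the totally ramified extension of degree $e\mid 12$, $e\le 6$, over which $E$ acquires good reduction) obtains only $ek\equiv a\pmod{p-1}$ with $a\in\{0,\dots,e\}$. For instance, with $e=4$, $a=0$, $p=17$ one can have $k=4$, whence $6k\equiv 8\not\equiv 0,6\pmod{16}$. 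The right exponent is~$12$: then $12k\equiv ra\pmod{p-1}$ with $r=12/e\ge 2$ and $ra\in\{0,r,2r,\dots,12\}$. Second, your assertion that reducibility ``rules out supersingular reduction'' (hence forces $a\in\{0,e\}$) is only immediate when the residue field at~$p$ is $\F_p$; when $p$ is inert in~$K$ the supersingular case can in principle produce $\F_p$-valued characters, which is why the paper works with the full Raynaud range.

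Relatedly, your placement of the hypothesis $p\equiv 1\pmod 4$ is off. It is \emph{not} used to kill ramification away from~$p$ (the paper shows $\alpha_0^{12}$ is unramified everywhere by Mazur's argument with the \'etale intermediate cover $X_2(p)\to X_0(p)$, independently of $p\bmod 4$). Its actual role is at the very end: the trace congruence
\[
q^{12ck}+q^{12c(1-k)}\equiv\Tr(\Frob_\lambda^{12c})\pmod p,\qquad q=2^{f},
\]
combined with $|\Tr(\Frob_\lambda^{12c})|\le 2q^{6c}$ gives the bound unless $q^{rca}+q^{rcb}=2q^{6c}$, i.e.\ $ra=rb=6$. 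Of the three sub-cases $(e,a)=(6,3),(4,2),(2,1)$, the first and third are impossible by parity, and the second gives $4k\equiv 2\pmod{p-1}$, which is exactly where $p\equiv 1\pmod 4$ is used to derive a contradiction. So the bound and its $d$-independence come from this case analysis, not from any Atkin--Lehner contribution at primes dividing~$d$.

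Your final eigenvalue-versus-norm computation is a legitimate variant of the paper's trace argument, but it rests on the unproven $\delta\in\{0,1\}$; without that, the nonvanishing of $\beta^{6c}-2^{6fc\delta}$ fails precisely in the $ra=6$ borderline case that has to be handled separately.
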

The reader is referred to the paper of Le Fourn~\cite{lefourn}, where results of a similar nature are proven. Specifically, in~{\cite[Corollary~5.1]{lefourn}}, Le Fourn gives a bound for such primes that depends not only on the quadratic number field $K$, but also on the elliptic curve itself. However, by restricting himself to the cases where $K$ is imaginary quadratic, he is able to give the absolute bound of $2\cdot 10^{13}$ for the size of such primes (this is~{\cite[Theorem~5.4]{lefourn}}). In comparison, Theorem~\ref{noborel} shows the existence of a bound depending only on the quadratic number field $K$ and on $d$, regardless of whether $K$ is real or imaginary.

As a final remark, we would like, once again, to draw the reader's attention to the papers of Ellenberg~\cite{ellen} and Le Fourn~\cite{lefourn}. In~\cite{ellen}, Ellenberg shows that if $K$ is an imaginary quadratic field and $d\geq 2$ is a square-free integer, then there exists a constant $C_{K,d}$ such that, given a $\Q$-curve $E$ completely defined over $K$, of degree $d$ and without complex multiplication, either $\proj\bar{\rho}_{E,p}$ surjects onto $\PGL_2(\F_p)$ for every prime $p>C_{K,d}$, or $E$ has potentially godd reduction at every prime of $K$ of characteristic not dividing $6$. The arguments appearing in the $\Q$-curve section of this paper will be based on some of his ideas. In~\cite{lefourn}, Le Fourn improves on the results of Ellenberg and gives an upper bound depending only on the discriminant of $K$ (still assumed to be imaginary quadratic) and on the degree of the $\Q$-curve for the largest non-surjective prime associated to $E$. One peculiarity of their results is that they need the degree of the $\Q$-curve to be $\geq 2$, i.e., they do not prove anything for elliptic curves defined over $\Q$. In this paper, we will start by proving Theorem~\ref{qmaintheorem}, which is the analogue of Theorem~\ref{cartanmain} for elliptic curves defined over $\Q$, i.e., $\Q$-curves of degree $1$.

\begin{ack}
I want to express my gratitute to Filip Najman, Marusia Rebolledo and Samir Siksek for their time, patience and their valuable suggestions, which greatly helped me during the process of writing up this article. I also want to thank the referee for his corrections. Finally, I am also indebted to the Max Planck Institute for Mathematics, in Bonn, both for the financial support and for the excellent working environment.
\end{ack}

\section{Galois representations of $\Q$-curves}\label{review}
We follow the approach of Ellenberg~\cite{ellen_qcurv}. For a more conceptual and complete treatment of the material in this section, the reader is referred to~\cite{ribet_qcurv}. However, the description given here will suffice for the most part of the present article. Results from~\cite{ribet_qcurv} will only be used in the proof of Theorem~\ref{noborel}.

Let $K$ be a Galois number field. Let $E$ be a $\Q$-curve defined (but not necessarily completely defined) over $K$. Assume, moreover, that $E$ does not have complex multiplication. For each $\sigma\in\Gal(\bar{\Q}/\Q)$, choose an isogeny $\mu_{\sigma}:{}^{\sigma}E\rightarrow E$. Note that if the restriction of $\sigma$ to $K$ is the trivial automorphism, then ${}^{\sigma}E=E$, and, in this case, we can choose $\mu_{\sigma}$ to be the identity. We will always assume that we make this choice and that, moreover, if two elements $\sigma,\tau\in \Gal(\bar{\Q}/\Q)$ restrict to the same automorphism of $K$, then $\mu_{\sigma}=\mu_{\tau}$. Since $E$ does not have complex multiplication, we have $\End_{\bar{\Q}}(E)\otimes\Q=\Q$. Therefore, given $\sigma,\tau\in\Gal(\bar{\Q}/\Q)$, the element
\begin{equation*}
c_E(\sigma,\tau):=\frac{1}{\deg\mu_{\sigma\tau}}\mu_{\sigma}\circ{}^{\sigma}\mu_{\tau}\circ\hat{\mu}_{\sigma\tau}\in\End_{\bar{\Q}}(E)\otimes\Q,
\end{equation*}
where $\hat{\mu}_{\sigma\tau}$ stands for the dual isogeny of $\mu_{\sigma\tau}$, can be regarded as an element of $\Q^{\times}$. 

Given, a prime number $p$, let $T_p(E)$ be the $p$-adic Tate module of $E$. Define the function (which, in general, is \emph{not} a homomorphism) $\varpi_{E,p}:G_{\Q}\rightarrow\GL(T_p(E))\cong\GL_2(\Q_p)$ in the following manner: given $P\in T_p(E)$ and $\sigma\in G_{\Q}$, we impose that $\varpi_{E,p}(\sigma)(P)=\mu_{\sigma}({}^{\sigma}P)$. 
\begin{rem}
Note that ${}^{\sigma}P\in{}^{\sigma}E(\bar{K})$. So, if $\sigma$ does not restrict to the trivial automorphism of $K$, we may have ${}^{\sigma}P\notin E(\bar{K})$. 
\end{rem}It is straightforward to check that the action of
\begin{equation*}
\varpi_{E,p}(\sigma)\varpi_{E,p}(\tau)\varpi_{E,p}(\sigma\tau)^{-1}
\end{equation*}
on $T_p(E)$ is given by $c_E(\sigma,\tau)\in\Q^{\times}$. Thus, $\varpi_{E,p}$ gives rise to a well-defined homomorphism $\proj{\rho}_{E,p}: G_{\Q}\rightarrow\PGL_2(\Q_p)$. If $p$ does not divide the degree of any $\mu_{\sigma}$, the construction of $\proj\bar{\rho}_{E,p}$ is identical to this.

\section{The case of elliptic curves over $\Q$}\label{rational}

The aim of this section is to prove Proposition~\ref{int_j} and Theorem~\ref{qmaintheorem}. 

But before starting to prove the aforementioned results, let us introduce some notation and terminology that will be used throughout the paper. Table~\ref{table:2} contains a summary of facts and notation that we will need.

Recall that a subgroup $\Gamma$ of $\SL_2(\Z)$ is called a \emph{congruence subgroup} if there exists a positive integer $N$ such that it contains
\begin{equation*}
\Gamma(N):=\left\{\begin{pmatrix}a & b \\ c & d\end{pmatrix}\in\SL_2(\Z):a\equiv d\equiv 1\text{ and }b\equiv c\equiv 0\pmod{N}\right\}.
\end{equation*}
In Table~\ref{table:1} we list some of the congruence subgroups that will appear more frequently during the course of this paper. In this table, $N$ stands for a positive integer, $p$ for an odd prime number, and $r_p$ for the natural reduction map $\SL_2(\Z)\rightarrow \SL_2(\F_p)$. Moreover, given an odd prime number $p$, we fix a non-split Cartan subgroup $C_{\rm{ns}}(p)$ of $\GL_2(\F_p)$ and write $C_{\rm{ns}}^+(p)$ for its normaliser.
\begin{table}[h!]
\centering

\begin{tabular}{ |c|c| }
\hline
$\Gamma_0(N)$ & $\begin{pmatrix} a & b \\ c & d\end{pmatrix}\in\SL_2(\Z): c\equiv 0\pmod{N}$\\
\hline
$\Gamma_{\rm{sp}}(N)$ & $\begin{pmatrix} a & b \\ c & d\end{pmatrix}\in\SL_2(\Z): b\equiv c\equiv 0\pmod{N}$\\
\hline
$\Gamma_{\rm{sp}}^+(N)$ & $\begin{pmatrix} a & b \\ c & d\end{pmatrix}\in \SL_2(\Z):b\equiv c\equiv 0\text{ or }a\equiv d\equiv 0\pmod{N}$\\
\hline
$\Gamma_{\rm{ns}}(p)$ & $r_p^{-1}(C_{\rm{ns}}(p)\cap\SL_2(\F_p))$\\
\hline
$\Gamma_{\rm{ns}}^+(p)$ & $r_p^{-1}(C_{\rm{ns}}^+(p)\cap\SL_2(\F_p))$\\
\hline
\end{tabular}
\caption{Some congruence subgroups of $\SL_2(\Z)$.}
\label{table:1}
\end{table}

We will work with modular curves obtained as quotients of the extended upper half plane $\uC^*$ by one of the congruence subgroups above or by some intersections of them. In fact, for any congruence subgroup $\Gamma$ that we will work with, it can be shown that the Riemann surface $\Gamma\backslash\uC^*$ descends to an algebraic curve defined over $\Q$. The point on this curve corresponding to $i\infty$ will be known as the \emph{cusp at infinity} and will be denoted by~$\infty$. In the following table we set up some terminology and summarise some of the facts concerning to these modular curves that will reveal to be useful later.

\begin{table}[h!]
\centering
\begin{tabular}{|m{2cm}|m{2cm}|m{3cm}|m{3cm}|m{3cm}|}
\hline
Congruence subgroup & Modular curve & Degree of $j$-invariant map & Ramification of $\infty$ w.r.t. $j$ & Field of definition of $\infty$\\
\hline\hline
$\Gamma_0(N)$ & $X_0(N)$ & $N\prod_{p\mid N}(1+1/p)$ & $1$ & $\Q$\\
\hline
$\Gamma_{\rm{sp}}(p)$ & $X_{\rm{sp}}(p)$ & $p(p+1)$ & $p$ & $\Q$\\
\hline
$\Gamma_{\rm{sp}}^+(p)$ & $X_{\rm{sp}}^+(p)$ & $p(p+1)/2$ & $p$ & $\Q$\\
\hline
$\Gamma_{\rm{ns}}(p)$ & $X_{\rm{ns}}(p)$ & $p(p-1)$& $p$ & $\Q(\zeta_p)$\\
\hline
$\Gamma_{\rm{ns}}^+(p)$ & $X_{\rm{ns}}^+(p)$ &$p(p-1)/2$& $p$ & $\Q(\zeta_p+\zeta_p^{-1})$\\
\hline
\end{tabular}
\caption{Some modular curves.}
\label{table:2}
\end{table}
With the notation set up, we are now ready to prove the results we will need. We start by noting that if $E$ is an elliptic curve defined over $\Q$ and without complex multiplication, then the values of $q$ for which the image of $\bar{\rho}_{E,q}$ is contained in the normaliser of a split Cartan subgroup of $\GL_2(\F_q)$ are very restricted. In fact, we have the following result.
\begin{thm}[Bilu--Parent--Rebolledo~\cite{bilparreb}]\label{bpr}
Let $E/\Q$ be an elliptic curve without complex multiplication. Then, if $q$ is a prime such that $q=11$ or $q\geq 17$, the image of $\bar{\rho}_{E,q}$ cannot be contained in the normaliser of a split Cartan subgroup of $\GL_2(\F_q)$.
\end{thm}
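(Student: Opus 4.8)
The plan is to reduce the statement to one about rational points on a modular curve and then run Mazur's method, in the winding-quotient form used by Darmon--Merel and refined by Bilu--Parent--Rebolledo. Saying that the image of $\bar\rho_{E,q}$ lies in the normaliser of a split Cartan subgroup of $\GL_2(\F_q)$ is exactly saying that $E$ gives a non-cuspidal rational point of $X_{\mathrm{sp}}^+(q)$. Via the classical isomorphism $X_{\mathrm{sp}}(q)\cong X_0(q^2)$, under which the nontrivial coset of the split Cartan in its normaliser corresponds to the Atkin--Lehner involution $w_{q^2}$, one gets $X_{\mathrm{sp}}^+(q)\cong X_0^+(q^2):=X_0(q^2)/\langle w_{q^2}\rangle$ (the two $j$-maps both have degree $q(q+1)/2$, cf.\ Table~\ref{table:2}). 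So the theorem is equivalent to: for $q=11$ and $q\geq 17$, every rational point of $X_0^+(q^2)$ is a cusp or a CM point. The restriction on $q$ is essential: for $q\in\{2,3,5,7\}$ the curve $X_0^+(q^2)$ has genus $0$ and infinitely many rational points, so the conclusion genuinely fails, while for $q=13$ it has genus $3$ and lies just outside the range of the method. For the remaining $q$ one checks, using Riemann--Hurwitz for $X_0(q^2)\to X_0^+(q^2)$ and Ogg's formula for the fixed points of $w_{q^2}$, that the genus is $\geq 2$; Faltings then gives finiteness, and the real task is to determine the rational points exactly.

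Fix a rational cusp $c$ on $X_0^+(q^2)$ (the image of $\{0,\infty\}$), let $J=\Jac(X_0^+(q^2))$ with its Hecke action, and let $J_e$ be the winding quotient of $J$, i.e.\ the maximal quotient of $J$ cut out by the annihilator of the winding element; equivalently it is built from the newforms contributing to $J$ whose $L$-function does not vanish at $s=1$. By the Gross--Zagier formula together with the theorem of Kolyvagin--Logachev, $J_e(\Q)$ is finite. What must be verified is that $J_e$ is nonzero, and indeed nonzero ``in the direction of $c$'' in the precise sense needed below; this rests on non-vanishing of the pertinent special $L$-values (for the new part of level $q^2$), which is where the arithmetic of $q$ enters. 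Let $\varphi\colon X_0^+(q^2)\to J_e$ be the morphism $P\mapsto[P]-[c]$ followed by projection to $J_e$.

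Now suppose $P\in X_0^+(q^2)(\Q)$ is non-cuspidal and non-CM; then $\varphi(P)$ lies in the finite group $J_e(\Q)$. Pick a small auxiliary prime $\ell\neq q$ of good reduction. Reduction modulo $\ell$ gives $\varphi(P)\bmod\ell=\varphi(\tilde P)$, so if $\tilde P$ lands in the same residue class modulo $\ell$ as $c$, then $\varphi(P)$ and $\varphi(c)=0$ have the same reduction, and the ``formal immersion'' lemma applies: if $\varphi$ is a formal immersion at $c$ over $\Z_\ell$, then any rational point reducing to $c$ modulo $\ell$ equals $c$, contradicting that $P$ is non-cuspidal. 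Checking the formal immersion property comes down to verifying that a certain matrix of Fourier coefficients $a_n(f)$, for $f$ ranging over a basis of the space of cusp forms cutting out $J_e$, is invertible modulo $\ell$ --- equivalently that the Hecke operators $T_1,\dots,T_g$ act with nondegenerate leading terms on the cotangent space at $c$. To control the residue class of $P$ one either shows that a non-cuspidal rational point of $X_0^+(q^2)$ can only reduce into a restricted set of classes modulo a well-chosen small prime, or combines the congruences obtained from two distinct auxiliary primes $\ell_1,\ell_2$; the finitely many CM $j$-invariants are eliminated by a separate direct check.

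The main obstacle is the case $q\equiv 1\pmod 4$. There the cusps of $X_0^+(q^2)$ and their fields of definition, and the behaviour of rational points under reduction at small primes, are less favourable, so a single formal immersion at $c$ no longer kills every residue class; Bilu--Parent--Rebolledo get around this with a more elaborate argument combining congruences from several auxiliary primes, supplemented when necessary by a formal immersion at an auxiliary (for instance CM) point and by an Oesterl\'e-type bound on the primes in question. Besides this, the genuinely technical core of the proof is establishing, uniformly over the whole infinite range $q\geq 17$, both the non-vanishing of the relevant $L$-values --- so that the winding quotient $J_e$ is large enough --- and the non-degeneracy modulo $\ell$ of the Fourier-coefficient matrix; I would expect these uniformity statements to absorb the bulk of the effort.
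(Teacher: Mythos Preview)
The paper does not prove this theorem; it is quoted from \cite{bilparreb} and used as a black box (together with Theorem~\ref{balak} for $q=13$) to restrict attention to $q\in\{2,3,5,7\}$. There is therefore no proof in the paper to compare your attempt against.

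As a sketch of the actual Bilu--Parent(--Rebolledo) argument, your outline captures only the first half of the strategy. The formal-immersion/winding-quotient step is indeed used, essentially as you describe, to show that any non-CM, non-cuspidal rational point $P$ on $X_{\mathrm{sp}}^+(q)$ has integral $j$-invariant. But the decisive second step is not a refinement of the congruence argument at further auxiliary primes, nor a formal immersion at an auxiliary CM point: it is Runge's method, which produces an explicit upper bound for $\log|j(P)|$ in terms of $q$ for integral points, combined with isogeny estimates (of Masser--W\"ustholz/Pellarin/Gaudron--R\'emond type) giving a lower bound for $\log|j(P)|$ that grows with $q$. Comparing the two bounds eliminates all sufficiently large $q$; the finitely many remaining primes are then handled by sharpened estimates and explicit computation in \cite{bilparreb}. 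Your discussion of a special $q\equiv 1\pmod 4$ obstruction and an ``Oesterl\'e-type bound'' does not correspond to what actually happens in their proof.
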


Recently, Balakrishnan, Dogra, M\"uller, Tuitman and Vonk~\cite{balak} showed that the only $\Q$-rational points of $X_{\mathrm{sp}}^+(13)$ are its cusps, thus proving the following theorem.
\begin{thm}[{\cite[Theorem~1.1]{balak}}]\label{balak}
Let $E/\Q$ be an elliptic curve without complex multiplication. Then the image of $\bar{\rho}_{E,13}$ is not contained in the normaliser of a split Cartan subgroup of $\GL_2(\F_{13})$.
\end{thm}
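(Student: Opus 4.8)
The plan is to reduce the statement to the determination of the $\Q$-rational points of the modular curve $X_{\mathrm{sp}}^+(13)$, which is precisely the content of~\cite{balak}. Recall that the non-cuspidal points of $X_{\mathrm{sp}}^+(p)$ classify (coarsely) pairs $(E',\{L_1,L_2\})$ consisting of an elliptic curve $E'$ and an unordered pair of complementary lines in $E'[p]$; in particular, an elliptic curve $E'$ over a field $F$ of characteristic $0$ equipped with a $G_F$-stable such pair determines an $F$-rational non-cuspidal point of $X_{\mathrm{sp}}^+(p)$. Now let $E/\Q$ be an elliptic curve such that the image of $\bar{\rho}_{E,13}$ is contained in the normaliser of a split Cartan subgroup of $\GL_2(\F_{13})$. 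Let $\{L_1,L_2\}$ be the unordered pair of eigenlines of that split Cartan inside $E[13]$; the hypothesis says exactly that $G_{\Q}$ permutes $L_1$ and $L_2$, so the pair is $G_{\Q}$-stable and $(E,\{L_1,L_2\})$ produces a $\Q$-rational point of $X_{\mathrm{sp}}^+(13)$ which is not a cusp, since $E$ is an honest elliptic curve.

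By the theorem of Balakrishnan, Dogra, M\"uller, Tuitman and Vonk recalled above, the only $\Q$-rational points of $X_{\mathrm{sp}}^+(13)$ are its cusps; hence no such point can exist, and therefore no elliptic curve $E/\Q$ can have the image of $\bar{\rho}_{E,13}$ contained in the normaliser of a split Cartan. (What~\cite{balak} actually proves is that every $\Q$-point of $X_{\mathrm{sp}}^+(13)$ is either a cusp or a point with complex multiplication; here the non-CM hypothesis on $E$ would rule out the latter possibility, which is why such a hypothesis is natural to include, although for the conclusion as phrased it is in fact superfluous.)

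All of the difficulty, and hence the real obstacle, lies in~\cite{balak}'s computation of $X_{\mathrm{sp}}^+(13)(\Q)$. The arguments of~\cite{bilparreb} underlying Theorem~\ref{bpr} settle the split-Cartan problem for $q=11$ and for $q\ge 17$, but famously do not cover $q=13$: the curve $X_{\mathrm{sp}}^+(13)$, the \emph{cursed curve}, has genus $3$ and its Jacobian has Mordell--Weil rank $3$ over $\Q$, so the classical Chabauty--Coleman method does not apply. The route taken in~\cite{balak} is the quadratic Chabauty (Chabauty--Kim) method: one fixes an explicit smooth plane model of $X_{\mathrm{sp}}^+(13)$, works $p$-adically at a carefully chosen prime, and uses $p$-adic height pairings together with the geometry of the pertinent Selmer variety to build a locally analytic function on $X_{\mathrm{sp}}^+(13)(\Q_p)$ that vanishes at every global rational point; one then enumerates its finitely many zeros and eliminates the extraneous ones by a Mordell--Weil sieve, leaving only the cusps and CM points. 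I would not attempt to reproduce any part of this; for the present purposes it is invoked wholesale, the only contribution required here being the elementary moduli-theoretic dictionary of the first paragraph.
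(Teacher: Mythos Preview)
Your main argument is correct and matches the paper's treatment: the paper simply cites this as \cite[Theorem~1.1]{balak} without proof, having just recorded that the $\Q$-rational points of $X_{\mathrm{sp}}^+(13)$ have been determined. Your first paragraph spells out the moduli-theoretic dictionary behind that citation, which the paper leaves implicit; that is a reasonable elaboration.

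There is, however, a genuine error in your parenthetical remark. You assert that the non-CM hypothesis is ``in fact superfluous'' for the conclusion as phrased; it is not. If $E/\Q$ has complex multiplication by an order in an imaginary quadratic field $F$ in which $13$ splits, then the image of $\bar{\rho}_{E,13}$ lies in the normaliser of a split Cartan subgroup of $\GL_2(\F_{13})$. Such curves exist: for instance $13$ splits in $\Q(i)$ and in $\Q(\sqrt{-3})$, and the corresponding CM $j$-invariants give rational points on $X_{\mathrm{sp}}^+(13)$ --- these are precisely among the non-cuspidal rational points that \cite{balak} finds. So the conclusion is false for some CM curves, and the hypothesis is essential. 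You had the logic right in the preceding sentence (``the non-CM hypothesis on $E$ would rule out the latter possibility''); the final clause contradicts it and should be deleted. Incidentally, this also means the paper's one-line summary just before the theorem (``the only $\Q$-rational points \ldots\ are its cusps'') is itself slightly imprecise, as you correctly noticed.
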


Therefore, we are reduced to considering the cases where $q\in\{2,3,5,7\}$, i.e., the cases where the genus of $X_{\rm{sp}}^+(q)$ is $0$. However, further ahead, we will need some of the results in this section to hold in the case $q=13$ as well. In fact, Theorem~\ref{balak} will only be used in the proof of Theorem~\ref{qmaintheorem} in order to obtain the explicit bound of $37$ (see Theorem~\ref{qmaintheorem} below); up until then, we will always assume that $q\in\{2,3,5,7,13\}$. We remark that these are precisely the primes $q$ for which $X_0(q)$ has genus $0$, a fact that plays an important role in the proof of Proposition~\ref{standardformal}.

The following is a more general version of {\cite[Proposition~2.2]{lem}}. We will need this general form later.
\begin{prop}[cf. {\cite[Proposition~2.2]{lem}}]\label{goodp}
Let $K$ be a number field of degree $n$ and let $E$ be an elliptic curve defined over $K$. Let $p$ be a prime such that the image of $\bar{\rho}_{E,p}$ is contained in the normaliser of a non-split Cartan subgroup of $\GL_2(\F_p)$. If $E$ has potentially multiplicative reduction at a prime $\lambda$ not dividing $p$, then $N_{K/\Q}(\lambda)^2\equiv 1\pmod{p}$. Moreover, if $p>2n+1$, then $E$ has potentially good reduction at every prime of $K$ dividing~$p$. 
\end{prop}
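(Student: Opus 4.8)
The plan is to localise at the offending prime, read off the shape of $\bar{\rho}_{E,p}$ there from the theory of curves with potentially multiplicative reduction, and play it against two elementary facts about $C_{\mathrm{ns}}^+(p)$ (assume $p$ odd, the case $p=2$ being trivial): \textbf{(i)} if an element of $C_{\mathrm{ns}}(p)\cong\F_{p^2}^{\times}$ has an eigenvalue in $\F_p$, then it is scalar; and \textbf{(ii)} every element of $C_{\mathrm{ns}}^+(p)\setminus C_{\mathrm{ns}}(p)$ has trace $0$. Both follow from a short matrix computation realising $\F_{p^2}^{\times}$ as $\F_p$-linear maps on $\F_{p^2}$ by multiplication, the non-trivial coset of the normaliser then acting by $y\mapsto xy^{p}$. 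Throughout, $\bar{\chi}_{\mathrm{cyc}}$ denotes the mod-$p$ cyclotomic character of $G_K$.

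For the congruence, fix a prime $\lambda\nmid p$ at which $E$ has potentially multiplicative reduction. Over the completion $K_\lambda$, Tate's uniformisation presents $E$ as the twist by a character $\chi$ with $\chi^{2}=1$ of a Tate curve; since the mod-$p$ representation of a Tate curve is conjugate to an upper-triangular one with diagonal characters $\bar{\chi}_{\mathrm{cyc}}$ and $1$, twisting by $\chi$ makes $\bar{\rho}_{E,p}|_{G_{K_\lambda}}$ conjugate to an upper-triangular representation with diagonal characters $\chi\bar{\chi}_{\mathrm{cyc}}$ and $\chi$. Fix a Frobenius element $\phi\in G_{K_\lambda}$; as $\lambda\nmid p$, $\bar{\chi}_{\mathrm{cyc}}(\phi)$ is the class of $N_{K/\Q}(\lambda)$ in $\F_p^{\times}$, while $\chi(\phi)\in\{\pm1\}$. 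Hence $\bar{\rho}_{E,p}(\phi)\in C_{\mathrm{ns}}^+(p)$ has eigenvalues $\chi(\phi)N_{K/\Q}(\lambda)$ and $\chi(\phi)$, both in $\F_p^{\times}$. If $\bar{\rho}_{E,p}(\phi)\in C_{\mathrm{ns}}(p)$, then by \textbf{(i)} it is scalar, forcing $N_{K/\Q}(\lambda)\equiv 1\pmod{p}$; otherwise it lies in the non-trivial coset and, by \textbf{(ii)}, its trace $\chi(\phi)\big(N_{K/\Q}(\lambda)+1\big)$ vanishes, forcing $N_{K/\Q}(\lambda)\equiv -1\pmod{p}$. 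In either case $N_{K/\Q}(\lambda)^{2}\equiv 1\pmod{p}$.

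For potential good reduction above $p$, suppose $p>2n+1$ and, for contradiction, that $E$ has potentially multiplicative reduction at some prime $\mathfrak p\mid p$ (the only alternative to potentially good reduction). Let $e\le n$ be the ramification index of $K_{\mathfrak p}/\Q_p$. As above, $\bar{\rho}_{E,p}|_{G_{K_{\mathfrak p}}}$ is conjugate to an upper-triangular representation with diagonal characters $\chi\bar{\chi}_{\mathrm{cyc}}$ and $\chi$ for some $\chi$ with $\chi^{2}=1$; but this time $\bar{\chi}_{\mathrm{cyc}}$ is ramified at $\mathfrak p$, and its restriction to the inertia subgroup $I_{\mathfrak p}$ has image a cyclic subgroup of $\F_p^{\times}$ of order at least $(p-1)/e\ge (p-1)/n>2$, the final inequality being exactly the hypothesis $p>2n+1$. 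Choose $g\in I_{\mathfrak p}$ with $\bar{\chi}_{\mathrm{cyc}}(g)\notin\{1,-1\}$. Then $\bar{\rho}_{E,p}(g)$ has eigenvalues $\chi(g)\bar{\chi}_{\mathrm{cyc}}(g)$ and $\chi(g)$ in $\F_p^{\times}$, which are distinct (their ratio, $\bar{\chi}_{\mathrm{cyc}}(g)$, is not $1$) and have non-zero sum (that ratio is not $-1$). Thus $\bar{\rho}_{E,p}(g)$ is non-scalar, has an eigenvalue in $\F_p$, and has non-zero trace; by \textbf{(i)} and \textbf{(ii)} no such matrix lies in $C_{\mathrm{ns}}^+(p)$, contradicting the hypothesis on the image of $\bar{\rho}_{E,p}$.

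I expect the only genuine work to be the routine but careful unwinding of the local structure theory --- extracting the twisting character $\chi$ from Tate's uniformisation and checking the displayed shape of $\bar{\rho}_{E,p}$ on the decomposition and inertia groups in both cases --- together with the verification of facts \textbf{(i)} and \textbf{(ii)} about $C_{\mathrm{ns}}^+(p)$. The arithmetic hypothesis $p>2n+1$ is used at a single point: to ensure that the mod-$p$ cyclotomic character has image of order strictly greater than $2$ on inertia at a prime above $p$ in a number field of degree $n$; everything else is formal.
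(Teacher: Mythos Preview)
Your proof is correct and follows essentially the same approach as the paper: localise at the bad prime, use Tate's uniformisation to put $\bar{\rho}_{E,p}$ in upper-triangular form with diagonal characters $\chi\bar{\chi}_{\mathrm{cyc}},\chi$, and then exploit the structure of $C_{\mathrm{ns}}^+(p)$ to force $\bar{\chi}_{\mathrm{cyc}}$ to have small image. The only real difference is cosmetic: where you case-split on whether a given element lies in $C_{\mathrm{ns}}(p)$ or in the non-trivial coset (invoking your facts \textbf{(i)} and \textbf{(ii)}), the paper simply squares --- noting that $\bar{\rho}_{E,p}(\sigma)^2$ always lands in $C_{\mathrm{ns}}(p)$ with eigenvalues $\chi_p(\sigma)^2$ and $1$, hence must be scalar --- and deduces $\chi_p(\sigma)^2=1$ for all $\sigma\in G_{K_\lambda}$ at once, then reads off both conclusions from that single identity.
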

\begin{proof}
Given a prime $\lambda$ of $K$, write $K_{\lambda}$ for the completion of $K$ at $\lambda$. Let $\bar{K}$ and $\bar{K}_{\lambda}$ be algebraic closures of $K$ and $K_{\lambda}$, respectively. Fix an embedding $\bar{K}\hookrightarrow\bar{K}_{\lambda}$. This induces an embedding of absolute Galois groups $G_{K_{\lambda}}\hookrightarrow G_K$, which amounts to a choice of a decomposition subgroup of $G_K$ over $\lambda$.

Now, suppose that $E$ has potentially multiplicative reduction at $\lambda$. Then we know that $E_{/K_{\lambda}}$ is a twist of a Tate curve $E_q$, $q\in K_{\lambda}^{\times}$.

Let $\psi$ be the character associated to this twist. It is well-known that $\psi$ is either trivial or quadratic. Therefore, we have
\begin{equation*}
\bar{\rho}_{E,p}|_{G_{K_{\lambda}}}\sim\begin{pmatrix} \psi\chi_p & * \\ 0 & \psi\end{pmatrix},
\end{equation*}
where $\chi_p:G_{K_{\lambda}}\rightarrow\F_p^{\times}$ stands for the mod $p$ cyclotomic character.
As a Cartan subgroup of $\GL_2(\F_p)$ is an index $2$ subgroup of its normaliser, $\bar{\rho}_{E,p}(\sigma)^2$ is an element of a non-split Cartan subgroup of $\GL_2(\F_p)$ for every $\sigma\in G_{K_{\idp}}$. Moreover, since $\psi$ is at most quadratic, the eigenvalues of $\bar{\rho}_{E,p}(\sigma)^2$ are $\chi_p(\sigma)^2$ and $1$. However, the eigenvalues of an element of a non-split Cartan subgroup are $\F_p$-conjugate. This means that $\chi_p(\sigma)^2=1$ for every $\sigma\in G_{K_{\lambda}}$. 

If $\lambda$ does not divide $p$, then this means that $N_{K/\Q}(\lambda)^2\equiv 1\pmod{p}$, as the statement of the proposition predicts.

Suppose now that $p>2n+1$ and that $\lambda$ divides $p$. Then, as $\chi_p(\sigma)^2=1$ for every $\sigma\in G_{K_{\lambda}}$, we must have $[K_{\lambda}(\zeta_p):K_{\lambda}]\leq 2$. On the other hand, $\Q_p(\zeta_p)\subseteq K_{\lambda}(\zeta_p)$ and $[\Q_p(\zeta_p):\Q_p]=p-1$. Hence, $[K_{\lambda}(\zeta_p):\Q_p]\geq p-1$, yielding $n\geq[K_{\lambda}:\Q_p]\geq(p-1)/2$, which contradicts the condition $p>2n+1$.
\end{proof}
In order to simplify notation, we will write $X_{\rm{sp,ns}}^{-,+}(q,p)$ for the curve $X_{\rm{sp}}(q)\times_{X(1)}X_{\rm{ns}}^+(p)$, $X_{\rm{sp,ns}}^{+,+}(q,p)$ for the curve $X_{\rm{sp}}^+(q)\times_{X(1)} X_{\rm{ns}}^+(p)$, and $X_{0,\rm{ns}}^+(N,p)$ for the curve $X_0(N)\times_{X(1)}X_{\rm{ns}}^+(p)$, where $N$ is a positive integer. These three curves correspond to certain quotients of the extended upper half plane: there is an analytic isomorphism between $X_{\rm{sp,ns}}^{-,+}(q,p)(\C)$ and the quotient of $\uC^*$ by $\Gamma_{\rm{sp}}(q)\cap\Gamma_{\rm{ns}}^+(p)$, another one between $X_{\rm{sp,ns}}^{+,+}(q,p)(\C)$ and the quotient of $\uC^*$ by $\Gamma_{\rm{sp}}^+(q)\cap\Gamma_{\rm{ns}}^+(p)$, and another between $X_{0,\rm{ns}}^+(N,p)(\C)$ and the quotient of $\uC^*$ by $\Gamma_0(N)\cap\Gamma_{\rm{ns}}^+(p)$.

In what follows, we will write $w_{q^2}$ for the involution of $X_{0,\mathrm{ns}}^+(q^2,p)$ arising from the Atkin--Lehner involution of $X_0(q^2)$ (recall that the moduli intepretation of the Atkin--Lehner involution of $X_0(q^2)$ is as follows: a point  of $X_0(q^2)$ represented by $(E,\varphi)$ --- where $E$ is an elliptic curve and $\varphi:E\rightarrow E'$ is an isogeny of degree $q^2$ --- is mapped to $(E',\hat{\varphi})$, where $\hat{\varphi}$ stands for the dual isogeny of $\varphi$).
\begin{lem}\label{isom_mod}
There is a $\Q$-isomorphism $\theta:X_{\rm{sp,ns}}^{-,+}(q,p)\rightarrow X_{0,\rm{ns}}^+(q^2,p)$. Moreover, the involution $w_{q^2}$ of $X_{0,\rm{ns}}^+(q^2,p)$ coming from the Atkin--Lehner involution of $X_0(q^2)$ corresponds, under this isomorphism, to the involution $\omega_q$ of $X_{\rm{sp,ns}}^{-,+}(q,p)$ coming from the obvious involution of $X_{\rm{sp}}(q)$. In other words, we have $\theta\circ\omega_q=w_{q^2}\circ \theta$.
\end{lem}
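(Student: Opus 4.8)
The plan is to produce the isomorphism $\theta$ at the level of moduli and then check compatibility with the Atkin--Lehner and split-Cartan involutions on moduli points, finally invoking the fact that both curves are defined over $\Q$ together with rigidity to conclude the map is defined over $\Q$. First I would recall the moduli interpretations. A point of $X_{\mathrm{sp}}(q)$ is (roughly) an elliptic curve $E$ together with an \emph{ordered} pair of independent order-$q$ subgroups $(C_1,C_2)$, i.e.\ $E[q]=C_1\oplus C_2$; a point of $X_0(q^2)$ is an elliptic curve $E$ together with a cyclic subgroup of order $q^2$. The key observation is that from the ordered pair $(C_1,C_2)$ one builds the isogeny $\varphi\colon E\to E/C_1$ of degree $q$, and the image $\varphi(E[q^2]\cap \varphi^{-1}(\ldots))$ — more precisely, one takes the subgroup of $E$ of order $q^2$ that is cyclic, contains $C_1$, and maps isomorphically onto $C_2$ under reduction modulo $C_1$; equivalently one exhibits a cyclic $q^2$-subgroup $D\subset E$ with $D[q]=C_1$ and $\varphi(D)=C_2$. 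This data is visibly equivalent to the pair $(C_1,C_2)$ (given $D$ cyclic of order $q^2$, set $C_1=D[q]$ and $C_2=\varphi(D)$ where $\varphi=E\to E/C_1$; conversely given the ordered pair, $D$ is reconstructed). Carrying along the level-$p$ structure defining $X_{\mathrm{ns}}^+(p)$ untouched, this gives a bijection on moduli, hence an isomorphism $\theta$ of the fiber products over $X(1)$; since the modular interpretation is functorial and the curves descend to $\Q$ (as recorded in Table~\ref{table:2} and the surrounding discussion), $\theta$ is automatically a $\Q$-isomorphism.

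Next I would check the claim about the involutions, again purely on moduli. The ``obvious involution'' $\omega_q$ of $X_{\mathrm{sp}}(q)$ swaps the ordered pair: $(E;C_1,C_2)\mapsto(E;C_2,C_1)$. Under the dictionary above, $(E;C_1,C_2)$ corresponds to $(E,D)$ with $D$ cyclic of order $q^2$, $D[q]=C_1$, $\varphi(D)=C_2$ where $\varphi\colon E\to E/C_1$. Applying $w_{q^2}$ sends this to $(E/D, \widehat{\varphi_D})$ where $\varphi_D\colon E\to E/D$ is the degree-$q^2$ isogeny with kernel $D$ and $\widehat{\varphi_D}\colon E/D\to E$ its dual. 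I would then unwind what ordered pair of subgroups of $E/D$ this corresponds to under $\theta^{-1}$, and verify it is exactly the image of $(E;C_2,C_1)$. Concretely: the dual isogeny $\widehat{\varphi_D}$ has cyclic kernel $\ker\widehat{\varphi_D}=\varphi_D(E[q^2])$, and one checks that the flag $\ker\widehat{\varphi_D}\supset (\ker\widehat{\varphi_D})[q]$ recovers, after transporting back, the pair $(C_2,C_1)$ rather than $(C_1,C_2)$ — the order-swap is precisely the effect of dualizing a $q^2$-isogeny factored through the intermediate $q$-isogeny. Since both $w_{q^2}$ (acting only on the $X_0(q^2)$ factor) and $\omega_q$ (acting only on the $X_{\mathrm{sp}}(q)$ factor) leave the $X_{\mathrm{ns}}^+(p)$-coordinate alone, the identity $\theta\circ\omega_q=w_{q^2}\circ\theta$ follows on moduli points, hence everywhere by density (these curves are irreducible over $\overline{\Q}$, or one argues on the complex analytic models and uses that a morphism of curves agreeing on infinitely many points is unique).

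An alternative, perhaps cleaner, route is entirely group-theoretic on the level of congruence subgroups: one has the analytic identifications $X_{\mathrm{sp}}(q)(\C)\cong\Gamma_{\mathrm{sp}}(q)\backslash\uC^*$ and $X_0(q^2)(\C)\cong\Gamma_0(q^2)\backslash\uC^*$, and the classical fact that conjugation by $\begin{pmatrix} q & 0\\ 0 & 1\end{pmatrix}$ carries $\Gamma_{\mathrm{sp}}(q)$ isomorphically onto $\Gamma_0(q^2)$ (this is a standard computation: $\begin{pmatrix} q & 0\\ 0 & 1\end{pmatrix}\begin{pmatrix} a & b\\ c & d\end{pmatrix}\begin{pmatrix} q^{-1} & 0\\ 0 & 1\end{pmatrix}=\begin{pmatrix} a & qb\\ c/q & d\end{pmatrix}$, and the condition $b\equiv c\equiv 0\pmod q$ on the left becomes $c/q\in q\Z$, i.e.\ $c\equiv 0\pmod{q^2}$ on the right, with the top-right entry automatically integral). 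The same matrix conjugation sends the involution $\tau\mapsto -q^2/\tau$ representing $w_{q^2}$ on $X_0(q^2)$ to the involution $\tau\mapsto -1/\tau$ of $X_{\mathrm{sp}}(q)$, which is the ``obvious involution'' $\omega_q$ exchanging the two cyclic factors. Taking the fiber product over $X(1)$ with $X_{\mathrm{ns}}^+(p)$ commutes with all of this since the $j$-line is fixed, yielding $\theta$ and the intertwining relation; descent to $\Q$ then follows because the Atkin--Lehner involution and the split-Cartan involution are both known to be defined over $\Q$, so the analytic isomorphism respects the $\Q$-structures on source and target.

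\textbf{Main obstacle.} The genuinely delicate point is getting the \emph{ordering} right in the moduli computation: verifying that $w_{q^2}$ corresponds to $\omega_q$ (the swap) and not to the identity requires carefully tracking how the dual of a factored $q^2$-isogeny permutes the associated pair of $q$-subgroups; one must be scrupulous about identifying $\ker\varphi_D$, $\ker\widehat{\varphi_D}$, and their $q$-torsion under the canonical isomorphism $E[q^2]/D\cong\ker\widehat{\varphi_D}$. The descent statement and the bijectivity of $\theta$ are, by contrast, essentially formal given the setup in the excerpt.
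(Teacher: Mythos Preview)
Both of your approaches share the same genuine gap, and it is precisely the subtlety the paper flags in the remark immediately following the lemma: the classical isomorphism $X_{\mathrm{sp}}(q)\cong X_0(q^2)$ (given analytically by $\tau\mapsto q\tau$, or equivalently by conjugation by $\begin{pmatrix} q & 0\\ 0 & 1\end{pmatrix}$) does \emph{not} preserve $j$-invariants, so it does \emph{not} induce an isomorphism of the fibre products over $X(1)$. Your sentence ``taking the fiber product over $X(1)$ with $X_{\mathrm{ns}}^+(p)$ commutes with all of this since the $j$-line is fixed'' is simply false. Likewise, in your moduli approach you try to build a bijection $(E;C_1,C_2)\leftrightarrow(E,D)$ keeping the \emph{same} elliptic curve $E$; but no such functorial (i.e.\ $\GL_2(\Z/q^2\Z)$-equivariant) bijection exists: the stabiliser of a cyclic $q^2$-subgroup fixes exactly one line in $E[q]$, whereas the stabiliser of an ordered pair $(C_1,C_2)$ fixes two, so the two stabilisers are not conjugate in $\GL_2(\Z/q^2\Z)$. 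Concretely, your recipe ``$D[q]=C_1$ and $\varphi(D)=C_2$'' fails because the images $\varphi(D)$, as $D$ ranges over cyclic $q^2$-subgroups with $D[q]=C_1$, never hit the image of $C_2$ in $E/C_1$.

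The paper's fix is to conjugate the \emph{entire} intersection $\Gamma_0(q^2)\cap\Gamma_{\mathrm{ns},1}^+(p)$ by $Q=\begin{pmatrix} q & 0\\ 0 & 1\end{pmatrix}$. This produces $\Gamma_{\mathrm{sp}}(q)\cap\Gamma_{\mathrm{ns},2}^+(p)$, where $C_2$ is a \emph{different} (conjugate) non-split Cartan. One then passes from $C_2\backslash X(p)$ to $C_1\backslash X(p)$ by an honest $j$-preserving base-change isomorphism (multiplication by an element of $\GL_2(\F_p)$ conjugating $C_2$ to $C_1$). In moduli terms, $\theta$ sends $(E,\varphi_1,\varphi_2,\frak n)$ to $(E/C_1,\ \varphi_2\circ\hat\varphi_1,\ \varphi_1(\frak n))$: the underlying elliptic curve changes, and the non-split Cartan structure is transported along $\varphi_1$ to compensate. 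Your computation that conjugation by $Q$ carries $w_{q^2}$ to the swap involution $\tau\mapsto -1/\tau$ is correct and is exactly what one uses for the second assertion; what is missing is the realisation that the non-split Cartan level structure must move too.
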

\begin{rem}
Even though there exists an isomorphism between $X_0(q^2)$ and $X_{\rm{sp}}(q)$, this is not enough to conclude Lemma~\ref{isom_mod}, because this isomorphism does not preserve $j$-invariants.
\end{rem}
\begin{proof}
Even though the existence of an isomorphism between $X_{\rm{sp,ns}}^{-,+}(q,p)$ and $X_{0,\rm{ns}}^+(q^2,p)$ cannot be directly proven by appealing to the isomorphism between $X_0(q^2)$ and $X_{\rm{sp}}(q)$, the proofs of the existence of these two isomorphisms are essentially the same. Indeed, start by identifying $X_{0,\rm{ns}}^{+}(q^2,p)(\C)$ with the Riemann surface $\Gamma_{0}(q^2)\cap\Gamma_{\mathrm{ns},1}^+(p)\backslash\uC^*$, where $\Gamma_{\mathrm{ns},1}^+(p)$ is $r_p^{-1}(C_1\cap\SL_2(\F_p))$ for some normaliser $C_1$ of a non-split Cartan subgroup of $\GL_2(\F_p)$ (recall that $r_p:\SL_2(\Z)\rightarrow\SL_2(\F_p)$ stands for the reduction modulo $p$). Similarly, we identify $X_{\rm{sp,ns}}^{-,+}(q,p)(\C)$ with the Riemann surface $\Gamma_{\rm{sp}}(q)\cap\Gamma_{\rm{ns}}^+(p)\backslash\uC^*$. Set $\Gamma:= \Gamma_{0}(q^2)\cap\Gamma_{\mathrm{ns},1}^+(p)$ and define 
\begin{equation*}
Q:=\begin{pmatrix} q & 0 \\ 0 & 1\end{pmatrix}.
\end{equation*}
The map $\Gamma\backslash\uC^*\rightarrow Q\Gamma Q^{-1}\backslash\uC^*$ given by $z\mapsto qz$ is an isomorphism. Note that $Q\Gamma Q^{-1}=\Gamma_{\rm{sp}}(q)\cap\Gamma_{\mathrm{ns},2}^+(p)$, where $\Gamma_{\mathrm{ns},2}^+(p)=r_p^{-1}(C_2\cap\SL_2(\F_p))$, where $C_2$ is a subgroup of $\GL_2(\F_p)$ conjugate to $C_1$. The Riemann surface $Q\Gamma Q^{-1}\backslash\uC^*$ corresponds to the $\C$-points of an algebraic curve $X_2$. The isomorphism between $X_{\rm{0,ns}}^{+}(q,p)(\C)$ and $X_2(\C)$ just defined can be seen to descend to an isomorphism defined over $\Q$. Therefore, we have a $\Q$-isomorphism between $X_{\rm{0,ns}}^{+}(q,p)$ and $X_2$. Now, we can define an isomorphism between $X_2$ and $X_{\rm{sp,ns}}^{-,+}(q,p)$ by a simple $\F_p$-base change. In a more formal way, we note that if $g\in\GL_2(\F_p)$ is such that $gC_2g^{-1}=C_1$, and if $X(p)$ denotes the modular curve parametrising elliptic curves with full $p$-torsion, then the automorphism of $X(p)$ defined by multiplication by $g$ induces a $\Q$-isomorphism $C_2\backslash X(p)\rightarrow C_1\backslash X(p)$. Moreover, this isomorphism preserves $j$-invariants. Since we have
\begin{equation*}
X_{\rm{sp,ns}}^{-,+}(q,p)=X_{\rm{sp}}(q)\times_{X(1)}C_1\backslash X(p)\quad\text{and}\quad X_2=X_{\rm{sp}}(q)\times_{X(1)} C_2\backslash X(p),
\end{equation*}
there is a $\Q$-isomorphism from $X_2$ to $X_{\rm{sp,ns}}^{-,+}(q,p)$. The isomorphism $\theta$ is obtained by composing this isomorphism with the isomorphism from $X_{0,\rm{ns}}^+(q,p)$ to $X_2$ defined above.

The statement relating the involutions of $X_{\rm{sp,ns}}^{-,+}(q,p)$ and $X_{0,\rm{ns}}^+(q,p)$ with the isomorphism $\theta$ can be achieved by looking at the moduli interpretation of $\theta$.

\end{proof}
\begin{rem}
The moduli intepretation of the isomorphism $\theta$ is given as follows. A $\C$-point in $X_{\rm{sp,ns}}^{-,+}(q,p)$ is represented by a tuple $(E,\varphi_1, \varphi_2, \frak{n})$, where $\varphi_1:E\rightarrow E_1$ and $\varphi_2:E\rightarrow E_2$ are two independent isogenies of degree $q$ and $\frak{n}$ is a necklace (for the definition of a necklace, see~\cite{rebwu}). The image of this point under $\theta$ is represented by the tuple $(E_1, \varphi_2\circ\hat{\varphi}_1, \varphi_1(\frak{n}))$, where $\hat{\varphi}_1$ stands for the dual isogeny of $\varphi_1$, and $\varphi_1(\frak{n})$ is the necklace in $E_1$ obtained as the image of the necklace $\frak{n}$ via $\varphi_1$.
\end{rem}
The curve $X_{0,\rm{ns}}^+(q^2,p)$ comes equipped with two ``degeneracy maps'' \begin{equation*}d_1,d_2:X_{0,\rm{ns}}^+(q^2,p)\rightarrow X_{0,\rm{ns}}^+(q,p)\end{equation*} coming from the degeneracy maps from $X_0(q^2)$ to $X_0(q)$. Let us briefly recall that the moduli interpretations of these degeneracy maps from $X_0(q^2)$ to $X_0(q)$ are as follows: a point in $X_0(q^2)$ represented by $(E,C)$ --- where $E$ is an elliptic curve and $C$ is a cyclic subgroup of $E(\C)$ of order $q^2$ --- is mapped by one of the degeneracy maps to $(E,C[q])$, and by the other to $(E/C[q], C/C[q])$. The maps $d_1$ and $d_2$ satisfy the relations
\begin{equation}\label{rel}
w_q\circ d_1=d_2\circ w_{q^2}\quad\text{and}\quad w_q\circ d_2=d_1\circ w_{q^2},
\end{equation}
where $w_q$ is the involution $X_{0,\rm{ns}}(q,p)$ coming from the Atkin--Lehner involution of $X_0(q)$. 

Let $J_{0,\rm{ns}}^+(q,p)$ stand for the Jacobian of $X_{0,\rm{ns}}^+(q,p)$. Adapting to our case a morphism from $X_0^+(q^2)$ to $J_0(q)$ that appears in section 3 of~\cite{maz_rat} and in~\cite{momose}, we define
\begin{equation*}
g:X_{0,\rm{ns}}^+(q^2,p)\rightarrow J_{0,\rm{ns}}^+(q,p)
\end{equation*}
by mapping a point $P$ to the class of $d_1(P)-d_2(P)$. By abuse of notation, we shall denote by $w_q$ the involution of $J_{0,\rm{ns}}^+(q,p)$ induced by the involution $w_q$ of $X_{0,\rm{ns}}^+(q,p)$. Equations~(\ref{rel}) give us the following equality:
\begin{equation}\label{rel2}
w_q\circ g=-g\circ w_{q^2}.
\end{equation}
Consider the abelian subvariety $B$ of $J_{0,\rm{ns}}^+(q,p)$ defined by $B:=(1+w_q)J_{0,\rm{ns}}^+(q,p)$. Define $J:=J_{0,\rm{ns}}^+(q,p)/B$ and let $\pi$ be the canonical projection from $J_{0,\rm{ns}}^+(q,p)$ to $J$. From equation~(\ref{rel2}) and Lemma~\ref{isom_mod}, we conclude that $\pi\circ g$ factors through $X_{\rm{sp,ns}}^{+,+}(q,p)$. Thus, we have the following commutative diagram:
\begin{equation*}
\begin{tikzpicture}[node distance = 2cm]

\node (A) {$X_{0,\rm{ns}}^+(q^2,p)$};
\node (B) [below of=A] {$X_{\rm{sp,ns}}^{+,+}(q,p)$};
\node (C) [node distance=4cm, right of=A]{$J_{0,\rm{ns}}^+(q,p)$};
\node (D) [below of=C] {$J$};
\draw[->] (A) to node [left] {} (B);
\draw[->] (A) to node [above] {$g$} (C);
\draw[->] (B) to node [below] {} (D);
\draw[->] (C) to node [right] {$\pi$} (D);

\end{tikzpicture}
\end{equation*}

The cusp at infinity $\infty$ of $X_{0,\rm{ns}}^+(q^2,p)$ is defined over $\Q(\zeta_p)^+:=\Q(\zeta_p+\zeta_p^{-1})$ (see Table~\ref{table:2}).
Note that $\pi\circ g(\infty)=0$.

\begin{prop}\label{fin_quot}
There exists a non-trivial optimal quotient $A$ of $J_{0,\rm{ns}}^+(q,p)$ such that $A(\Q)$ is finite and the kernel of the canonical projection $\pi':J_{0,\rm{ns}}^+(q,p)\rightarrow A$ is stable under the Hecke operators $T_{\ell}$, $\ell$ prime $\neq p$. Moreover, $\pi'$ factors through $\pi$.
\end{prop}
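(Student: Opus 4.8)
The plan is to produce the quotient $A$ by means of the Eichler--Shimura theory, cutting out from $J_{0,\mathrm{ns}}^+(q,p)$ the part of the new subspace on which a rank argument forces the Mordell--Weil group to be finite. First I would decompose $J_{0,\mathrm{ns}}^+(q,p)$ (up to isogeny) according to the action of the Hecke algebra $\mathbb{T}$ generated by the $T_\ell$ for $\ell\neq p$, together with $w_q$. The abelian variety $B=(1+w_q)J_{0,\mathrm{ns}}^+(q,p)$ is $\mathbb{T}$-stable and corresponds to the $w_q$-invariant part; its complement $J=J_{0,\mathrm{ns}}^+(q,p)/B$ carries the isogeny factors on which $w_q$ acts by $-1$. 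It suffices to find inside $J$ a non-trivial optimal quotient $A$ with $A(\Q)$ finite whose defining kernel is $\mathbb{T}$-stable, and then take $\pi'$ to be the composite $J_{0,\mathrm{ns}}^+(q,p)\to J\to A$; the factorisation through $\pi$ is then automatic, since $\pi$ is precisely the projection to $J$.

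The core of the argument is the standard winding-quotient idea of Mazur, Merel and Parent: one uses the morphism $\pi\circ g$ to the diagram above, whose image generates an abelian subvariety $J'$ of $J$, and one analyses the $L$-functions of the simple factors occurring in $J'$. Concretely, I would let $A$ be the quotient of $J$ dual to the winding quotient, i.e.\ the largest quotient on which all the relevant $L$-functions do not vanish at $s=1$; by Kolyvagin--Logachev (or, in the modular-abelian-variety setting, by the results of Kato together with Kolyvagin's Euler system) such a quotient has finite Mordell--Weil group over $\Q$. The key input making this quotient \emph{non-trivial} is that $X_0(q)$ has genus $0$ for $q\in\{2,3,5,7,13\}$ --- this is exactly the remark made in the excerpt before Proposition~\ref{goodp} and is what will be used in the proof of Proposition~\ref{standardformal}. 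Because $J_0(q)=0$, the map $g$ and hence $\pi\circ g$ land in a genuinely ``new at level $p$'' piece, and the presence of the non-split Cartan structure at $p$ guarantees that this piece is non-zero for $p\geq 11$; one checks that $w_q$ acts by $-1$ on the factor receiving $\pi\circ g$, so it survives in $J$, and that the winding element pairs non-trivially with it. The Hecke-stability of $\ker\pi'$ is built in, since $A$ is defined as a quotient by a $\mathbb{T}$-stable ideal acting on $J_{0,\mathrm{ns}}^+(q,p)$.

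The main obstacle I anticipate is verifying the non-vanishing of the relevant $L$-values --- equivalently, showing the winding quotient $A$ is non-trivial and really has rank $0$ after passing through $\pi$. For the classical curves $X_0^+(q^2)$ this is handled by Mazur and by Momose (cited in the excerpt), and one has to check that tensoring with the non-split Cartan level structure at $p$ (i.e.\ base-changing along $X_{\mathrm{ns}}^+(p)\to X(1)$) does not destroy the argument: the Hecke eigenforms appearing in $J_{0,\mathrm{ns}}^+(q,p)$ all have level dividing $q^2 p^2$ (or its relevant refinement), trivial or at most quadratic nebentypus, and one needs their $L$-functions to be non-vanishing at the centre for the factors on which $w_q=-1$. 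This reduces, via Eichler--Shimura, to the analytic non-vanishing statements of Merel and of Parent for the winding quotient at those levels; citing those and tracking the $w_q$-eigenvalue through the degeneracy-map relations~(\ref{rel}) and~(\ref{rel2}) completes the construction. A secondary, more bookkeeping-type difficulty is ensuring \emph{optimality} of $A$ as a quotient, which amounts to choosing the saturation of the Hecke ideal correctly; this is routine once the simple factors have been pinned down.
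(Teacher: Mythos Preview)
Your approach is essentially the same as the paper's: take $A$ to be the winding quotient. The paper does not reconstruct the argument but simply cites Darmon--Merel~{\cite[Proposition~7.1]{darmer}} for the existence of a non-trivial optimal quotient $A$ of (the new part of) $J_{0,\mathrm{ns}}^+(q,p)$ with $A(\Q)$ finite and Hecke-stable kernel, noting that the argument there, stated for $q\in\{2,3\}$, goes through verbatim for $q\in\{2,3,5,7,13\}$. For the factorisation through $\pi$, the paper's argument is slightly different from yours: rather than building $A$ as a quotient of $J$ from the outset, it takes $A$ to be the winding quotient of the whole new part and then observes that $1+w_q$ lies in the winding ideal, so $B=(1+w_q)J_{0,\mathrm{ns}}^+(q,p)$ is automatically killed by $\pi'$. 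Your route (pass to $J$ first, then take the winding quotient there) is equivalent and makes the factorisation tautological; the paper's route is a one-line observation once $A$ is in hand.

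One point to correct in your write-up: the fact that $X_0(q)$ has genus $0$ is \emph{not} what makes the winding quotient non-trivial. That hypothesis is used in the next proposition (the formal-immersion statement), exactly as the paper remarks. The non-triviality and rank-zero property of $A$ come entirely from the analytic non-vanishing input (Merel, Kato/Kolyvagin--Logach\"ev) that you correctly identify later; the map $g$ and its image play no role in the construction of $A$ itself.
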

\begin{proof}
The first part of the proposition has been proved in~{\cite[Proposition~7.1]{darmer}} (even though this is only stated for the case where $q=2,3$, it is not hard to see that the same argument shows that the result holds for $q\in\{2,3,5,7,13\}$). In order to see that $\pi'$ factors through $\pi$, note that $A$ is defined to be the winding quotient of the new part of $J_{0,\rm{ns}}^+(q,p)$ (see~\cite{darmer}). Since $1+w_q$ is an element of the winding ideal, it follows from the definition of $J$ and $\pi$ that $\pi'$ factors through $\pi$.
\end{proof}

 Let $h$ denote the composition of $\pi\circ g$ with the natural projection from $J$ to $A$. Now, let $\loc$ be the ring of integers of $\Q(\zeta_p)^+$ and define $R:=\loc[1/2qp]$. Given a curve $X$ defined over $\Q$, we shall write $X_{/R}$ for the minimal regular model of $X$ over $R$. Similarly, given an abelian variety $B$, we shall write $B_{/R}$ for the N\'eron model of $B$ over $R$. With this notation, the morphism $h$ extends to a morphism $X_{0,\rm{ns}}^+(q^2,p)_{/R}\rightarrow A_{/R}$. By abuse of notation, we shall refer to this morphism by $h$ as well. 

Before stating our next result, let us recall the definition of formal immersion. Let $S_1$ and $S_2$ be two schemes and let $f:S_1\rightarrow S_2$ be a morphism. Let $x$ be a point in $S_1$ and define $y:=f(x)$. Write $\hat{\loc}_{S_1,x}$ and $\hat{\loc}_{S_2,y}$ for the formal completions of the local rings of $S_1$ and $S_2$ at $x$ and $y$, respectively. We say that $f$ is a formal immersion at $x$ if the induced morphism $\hat{f}_x:\hat{\loc}_{S_2,y}\rightarrow \hat{\loc}_{S_1,x}$ is surjective. 

Now, let $A$ be a Dedekind domain and suppose that $S_1$ and $S_2$ are schemes over $\Spec(A)$. Let $x$ be a section (over $A$) of $S_1$, and let $y$ be the section of $S_2$ which corresponds to the image of $x$. We will say that $f$ is a formal immersion at $x$ if $f$ is a formal immersion at $x_{\idp}$ for every non-zero prime ideal $\idp$ of $A$, where $x_{\idp}$ stands for the special fibre of $x$ at $\idp$.
\begin{prop}\label{standardformal}
The morphism $h$ is a formal immersion at $\infty_{/R}$, where $\infty_{/R}$ stands for the section over $R$ defined by $\infty$.
\end{prop}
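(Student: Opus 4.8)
The plan is to verify that $h$ is a formal immersion at the cusp $\infty_{/R}$ by passing to cotangent spaces and reducing the question to a statement about $q$-expansions of modular forms, exactly in the spirit of Mazur's argument in \cite{maz_eis,maz_rat} and its adaptation in \cite{darmer}. Recall that $h$ factors as $X_{0,\rm{ns}}^+(q^2,p)\xrightarrow{g} J_{0,\rm{ns}}^+(q,p)\xrightarrow{\pi} J\to A$, and that $h(\infty)=0$. By the standard criterion (see \cite[\S 2]{darmer} or the discussion preceding \cite[Lemma~2.1]{maz_rat}), it suffices to check that for every prime $\ell$ of $R$, the induced map on cotangent spaces $\Cot_0(A_{/\F_\ell})\to\Cot_{\infty}(X_{0,\rm{ns}}^+(q^2,p)_{/\F_\ell})$ is surjective; since $\dim A$ can be larger than $1$ this is really a statement that a suitable collection of differentials pull back to things with nonvanishing leading coefficient at $\infty$. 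Because $R=\loc[1/2qp]$ has been localised away from $2$, $q$ and $p$, the curve $X_{0,\rm{ns}}^+(q^2,p)$ and the abelian variety $A$ both have good reduction at every prime of $R$ (the bad primes of these modular objects divide $2qp$), so the reduction arguments are unobstructed and one may work with the smooth models; the cusp $\infty_{/R}$ lands in the smooth locus.

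The key steps, in order, are: (1) Identify $\Cot_0(A)$, via Faltings--Jordan / the theory of the winding quotient, with a space of weight-$2$ cusp forms for $\Gamma_{0}(q^2)\cap\Gamma_{\rm{ns}}^+(p)$ lying in the new, $w_q$-anti-invariant, winding-quotient part; pullback by $g$ then sends a form $f$ to $d_1^*f-d_2^*f$, whose $q$-expansion at $\infty$ is computed from the degeneracy maps (multiplying the $q$-expansion variable by powers of the Tate parameter). (2) Observe that the $j$-invariant map $X_{0,\rm{ns}}^+(q^2,p)\to X(1)$ is unramified at $\infty$ in the $\Gamma_0(q^2)$-direction precisely when $q\in\{2,3,5,7,13\}$ is such that $X_0(q)$ has genus $0$ — this is the point flagged in the excerpt ("a fact that plays an important role in the proof of Proposition~\ref{standardformal}"): it guarantees that a uniformiser at $\infty$ can be taken to be the classical $q$-expansion parameter on the $X_0$-side, so the order of vanishing of a pulled-back differential at $\infty$ is read off directly from the first nonzero Fourier coefficient. (3) Combine (1) and (2): since $A$ is a nontrivial quotient and $A(\Q)$ is finite (Proposition~\ref{fin_quot}), $\Cot_0(A)$ is nonzero, and one exhibits an explicit form in it whose image under $d_1^*-d_2^*$ has nonvanishing $a_1$-coefficient — here the genus-$0$ condition on $X_0(q)$ and the new/$w_q$-anti-invariant conditions together force the leading coefficient to survive, so the cotangent map is surjective onto the $1$-dimensional $\Cot_\infty$. (4) Check this in every characteristic $\ell\nmid 2qp$: the relevant $q$-expansion coefficients are algebraic integers away from $2qp$ (they involve only the degeneracy-map combinatorics and the Hecke eigenvalue normalisations), so nonvanishing in characteristic $0$ plus control of denominators gives nonvanishing mod $\ell$.

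The main obstacle I expect is step (4) together with the precise bookkeeping in step (1): one must be careful that the form witnessing surjectivity can be chosen with $q$-expansion coefficients that are units (or at least nonzero) modulo every prime of $R$, which is why the localisation at $2qp$ is built into $R$ in the first place; verifying that no further primes enter — in particular that the Manin constant / Atkin--Lehner normalisations do not introduce bad primes outside $2qp$ — is the delicate part, and it is exactly where one invokes that $q\in\{2,3,5,7,13\}$ so that $X_0(q^2)$ and its degeneracy maps to the genus-zero curve $X_0(q)$ are as simple as possible. A secondary subtlety is checking that $\Cot_0(A)\to\Cot_0(J)$ is an isomorphism onto the part where $w_q$ acts by $-1$ (needed so that the factorisation through $X_{\rm{sp,ns}}^{+,+}(q,p)$ does not kill the relevant differential), which follows from $\ker\pi'$ being Hecke-stable and $1+w_q$ lying in the winding ideal, as recorded in Proposition~\ref{fin_quot}. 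Once these normalisation issues are pinned down, the formal-immersion property is immediate from the $q$-expansion principle.
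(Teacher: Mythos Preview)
Your outline has the right overall architecture (pass to cotangent spaces at $\infty$ modulo a prime $\lambda$ of $R$ and read off a leading $q$-expansion coefficient), but the mechanism you propose in steps~(2)--(4) is not the one that actually works, and two of your claims are incorrect.

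First, step~(2) misidentifies the role of the genus-$0$ hypothesis on $X_0(q)$. The map $X_0(N)\to X(1)$ is \emph{always} unramified at $\infty$ (see Table~\ref{table:2}); this has nothing to do with whether $q\in\{2,3,5,7,13\}$. The genuine use of the genus-$0$ condition is that $S_2(\Gamma_0(q))=0$, and this enters at a completely different point of the argument (see below). Consequently your step~(3), where you say the genus-$0$ condition ``forces the leading coefficient to survive'', has no content as stated.

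Second, and more seriously, you never explain \emph{how} to produce a form in $\Cot(A_{/\F_\lambda})$ with $a_1\neq 0$; you also relegate the Hecke-stability of $\ker\pi'$ to a ``secondary subtlety'', when in fact it is the entire engine of the proof. The paper's argument runs directly in characteristic $\ell$ (no lifting from characteristic $0$, no Manin-constant issues): take any nonzero $f\in\Cot(A_{/\F_\lambda})\hookrightarrow\Cot(J_{0,\rm ns}^+(q,p)_{/\F_\lambda})$ with $q$-expansion $\sum a_n(f)q^{n/p}$. The image of $f$ under $g^*$ in $\Cot_\infty(X_{0,\rm ns}^+(q^2,p)_{/\F_\lambda})$ is $a_1(f)$ (because $d_2$ is ramified at $\infty$, so only $d_1^*f$ contributes to the leading term). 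If $a_1(f)=0$ for \emph{every} such $f$, then since $\Cot(A_{/\F_\lambda})$ is Hecke-stable we also have $a_\ell(f)=a_1(T_\ell f)=0$ for every prime $\ell\neq p$, hence $a_n(f)=0$ for all $n$ coprime to $p$. But then $f=\sum_n a_{pn}(f)q^n$ lies in (the reduction of) $S_2(\Gamma_0(q))$, which vanishes precisely because $X_0(q)$ has genus $0$ for $q\in\{2,3,5,7,13\}$. This contradicts $f\neq 0$. Your step~(4), reducing a characteristic-$0$ nonvanishing modulo $\lambda$ via denominator control, is therefore unnecessary and would in any case require exactly the sort of integrality statements you flag as ``delicate''; the direct mod-$\lambda$ argument avoids all of that.
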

\begin{proof}
The proof of this result is standard (see, for example,~\cite{maz_rat}). Indeed, let $\lambda$ be a prime of $K:=\Q(\zeta_p)^+$ not dividing~$2qp$. Let $\F_{\lambda}$ denote the residue field at $\lambda$ of $\Q(\zeta_p)^+$. Write $\Cot_{\infty}(X_{0,\rm{ns}}^{+}(q^2,p)_{/\F_{\lambda}})$ for the cotangent space of $X_{0,\rm{ns}}^{+}(q^2,p)_{/\F_{\lambda}}$ at $\infty_{/\F_{\lambda}}$. In a similar manner, write $\Cot(J_{0,\rm{ns}}^+(q,p)_{/\F_{\lambda}})$ for the cotangent space of $J_{0,\rm{ns}}^+(q,p)_{/\F_{\lambda}}$ at $0_{/\F_{\lambda}}$, and the same thing goes for $\Cot(A)$. Showing that $h$ is a formal immersion at $\infty_{/\F_{\lambda}}$ is equivalent to showing that the map $\Cot(A_{/\F_{\lambda}})\rightarrow\Cot_{\infty}(X_{0,\rm{ns}}^{+}(q^2,p)_{/\F_{\lambda}})$ is surjective.

 As the characteristic of $\lambda$ is different from $2$, $\Cot(A_{/\F_{\lambda}})$ injects into $\Cot(J_{0,\rm{ns}}^+(q,p)_{/\F_{\lambda}})$ (see {\cite[Corollary~1.1]{maz_rat}}). Since $A$ is non-trivial, there exists a non-trivial element $f\in\Cot(A_{/\F_{\lambda}})$. Regarding $f$ as an element of $\Cot(J_{0,\rm{ns}}^+(q,p)_{/\F_{\lambda}})$, let 
\begin{equation*}
f=\sum_{n=1}^{\infty} a_n(f) q^{n/p}\in\F_{\lambda}[[q^{1/p}]]
\end{equation*}
be the $q$-expansion of $f$. The image of $f$ in $\Cot_{\infty}(X_{0,\rm{ns}}^{+}(q^2,p)_{/\F_{\lambda}})$ is $a_1(f)$, as can be easily checked. If $a_1(f)\neq 0$ (in $\F_{\lambda}$), then we are done. Suppose, for the sake of contradiction, that $a_1(f)=0$ and  $a_1(T_{\ell}f)=0$ for every prime $\ell\neq p$. Now, $a_1(T_{\ell}f)=a_{\ell}(f)$, which yields that $a_n(f)=0$ for every $n$ coprime to $p$. Thus,
\begin{equation*}
f=\sum_{n=1}^{\infty}a_{pn}(f)q^n.
\end{equation*}
Therefore, $f$ is the reduction modulo $\lambda$ of a cusp form in $S_2(\Gamma_0(q))$. However, since $q\in\{2,3,5,7,13\}$, this vector space is trivial, which is a contradiction.
\end{proof}
\begin{corol}\label{corol_formim}
The morphism $X_{\rm{sp,ns}}^{+,+}(q,p)_{/R}\rightarrow A_{/R}$ is a formal immersion at $\infty_{/R}$.
\end{corol}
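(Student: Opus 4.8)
The plan is to obtain the corollary from Proposition~\ref{standardformal} by exploiting the factorization of $h$ through $X_{\rm{sp,ns}}^{+,+}(q,p)$ and checking that the intermediate morphism is \'etale at the cusp.

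First I would record the factorization. By the commutative square preceding Proposition~\ref{fin_quot}, together with the fact that $\pi'$ factors through $\pi$ (Proposition~\ref{fin_quot}), the morphism $h$ factors as $h=\psi\circ\phi$, where $\phi\colon X_{0,\rm{ns}}^+(q^2,p)\to X_{\rm{sp,ns}}^{+,+}(q,p)$ is the natural degree-$2$ quotient morphism (the left vertical arrow of the square) and $\psi\colon X_{\rm{sp,ns}}^{+,+}(q,p)\to A$ is the composite $X_{\rm{sp,ns}}^{+,+}(q,p)\to J\to A$, i.e.\ the morphism of the corollary. Every prime $\lambda$ of $R=\loc[1/2qp]$ lies over a rational prime dividing none of $2$, $q$, $p$, so the three curves in sight have good reduction at $\lambda$; hence $\phi$, $\psi$ and $h$ all extend to morphisms of the corresponding smooth proper models over $R$ (using properness and normality for $\phi$, and the N\'eron mapping property for $\psi$ and $h$), and the identity $h=\psi\circ\phi$ persists over $R$ by separatedness of $A_{/R}$. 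Writing $\infty$ for the cusp at infinity of each curve, the description of $\theta$ in Lemma~\ref{isom_mod} (on the upper half plane it is induced by $z\mapsto qz$, which fixes $i\infty$) shows that $\phi(\infty)=\infty$.

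Next I would show that $\phi$ is \'etale at $\infty$ in every residue characteristic. By Lemma~\ref{isom_mod} the involution of $X_{0,\rm{ns}}^+(q^2,p)$ with quotient $X_{\rm{sp,ns}}^{+,+}(q,p)$ corresponds, via $\theta$, to the involution $w_{q^2}$ induced by the Atkin--Lehner involution of $X_0(q^2)$; since the Atkin--Lehner involution of $X_0(q^2)$ interchanges the cusps $0$ and $\infty$, we have $w_{q^2}(\infty)\neq\infty$ on $X_{0,\rm{ns}}^+(q^2,p)$, and the closures of these two distinct cusps in the good-reduction model over $R$ stay disjoint, so $w_{q^2}(\infty)\neq\infty$ on every special fibre as well. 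Thus $\phi$ is a degree-$2$ morphism that is unramified at $\infty$ in every residue characteristic, and as those characteristics are all odd $\phi$ is in fact \'etale at $\infty$; consequently the cotangent map $\phi^{*}\colon\Cot_{\infty}(X_{\rm{sp,ns}}^{+,+}(q,p)_{/\F_{\lambda}})\to\Cot_{\infty}(X_{0,\rm{ns}}^+(q^2,p)_{/\F_{\lambda}})$ is an isomorphism for every prime $\lambda$ of $R$.

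Finally I would conclude exactly as in the proof of Proposition~\ref{standardformal}: for a prime $\lambda$ of $R$, the morphism $\psi$ is a formal immersion at $\infty_{/\F_{\lambda}}$ if and only if $\psi^{*}\colon\Cot(A_{/\F_{\lambda}})\to\Cot_{\infty}(X_{\rm{sp,ns}}^{+,+}(q,p)_{/\F_{\lambda}})$ is surjective. Since $h^{*}=\phi^{*}\circ\psi^{*}$ on cotangent spaces, with $h^{*}$ surjective by Proposition~\ref{standardformal} and $\phi^{*}$ an isomorphism, $\psi^{*}$ is surjective. As $\lambda$ was arbitrary, $X_{\rm{sp,ns}}^{+,+}(q,p)_{/R}\to A_{/R}$ is a formal immersion at $\infty_{/R}$. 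No step here is deep; the one deserving attention is the passage to the integral models --- one needs the cusps $\infty$ and $w_{q^2}(\infty)$ to remain distinct in every residue characteristic, so that $\phi$ stays \'etale at $\infty$ on each fibre, which is precisely why the base $R$ was chosen so as to invert $2$ and the primes of bad reduction.
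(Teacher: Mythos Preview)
Your argument is correct and is essentially the approach implicit in the paper, which states the corollary without proof: the map $h$ factors as $\psi\circ\phi$ through $X_{\rm{sp,ns}}^{+,+}(q,p)$, and Proposition~\ref{standardformal} gives that $h$ is a formal immersion at $\infty_{/R}$, so the same holds for $\psi$. One small remark: your explicit verification that $w_{q^2}(\infty)\neq\infty$ (and hence that $\phi$ is \'etale at $\infty$) is correct but not actually needed, since on each special fibre the cotangent spaces $\Cot_{\infty}(X_{0,\rm{ns}}^+(q^2,p)_{/\F_{\lambda}})$ and $\Cot_{\infty}(X_{\rm{sp,ns}}^{+,+}(q,p)_{/\F_{\lambda}})$ are both one-dimensional over $\F_{\lambda}$, so the surjectivity of $h^{*}=\phi^{*}\circ\psi^{*}$ already forces $\phi^{*}$ to be an isomorphism and then $\psi^{*}$ to be surjective.
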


\begin{proof}[Proof of Proposition~\ref{int_j}]
Once again, the argument is standard. We start by noting that, given a $\Q$-rational point $P$ of $X_{\rm{sp,ns}}^{+,+}(q,p)$, its image $Q$ in $A$ is torsion, because the morphisms are defined over $\Q$ and $A$ has finite Mordell--Weil group. Let $\ell$ be a prime congruent to $\pm 1\bmod{p}$ (as $p\geq 11$, Proposition~\ref{goodp} asserts that these are the only primes we have to worry about). Since $p\geq 11$, we have $\ell>2$. Note that, since $\ell\equiv\pm 1\pmod{p}$, $\ell$ is inert in $\Q(\zeta_p)^+$. Let $\tilde{A}$ stand for the special fibre of the N\'eron model of $A$ over $\Z_{\ell}$. It is well-known that the reduction map gives us an injection $\Tors(A(\Q))\hookrightarrow \tilde{A}(\F_{\ell})$. Therefore, writing $\tilde{Q}$ for the reduction of $Q$ modulo $\ell$, we have $\tilde{Q}=0$ in $\tilde{A}(\F_{\ell})$ if, and only if, $Q=0$.

Suppose that $E$ has potentially multiplicative reduction at $\ell$. Then it gives rise to a $\Q$-rational point $P$ in $X_{\rm{sp,ns}}^{+,+}(q,p)$ which meets one of the cusps at the fibre at $\ell$. By choosing appropriate bases for $\GL_2(\F_q)$ and $\GL_2(\F_p)$, we may assume that this cusp is $\infty$. Therefore, writing, as above, $Q$ for the image of $P$ in $A$, we find that $\tilde{Q}=0$. Hence, by the observation of the previous paragraph, $Q=0$. Since the morphism $X_{\rm{sp,ns}}^{+,+}(q,p)_{/R}\rightarrow A_{/R}$ is a formal immersion at $\infty_{/R}$ in characteristic $\ell$, and as $P$ meets $\infty$ at the fibre of $\ell$, we must have $P=\infty$, which is a contradiction.
\end{proof}
We are finally ready to prove Theorem~\ref{qmaintheorem}. 
\begin{proof}[Proof of Theorem~\ref{qmaintheorem}]
We will make use of Proposition~\ref{int_j}. The argument used here is analogous to the one used in the proof of~{\cite[Theorem~1.1]{lem}}. Suppose that $E/\Q$ and $q$ are as in the statement of Theorem~\ref{qmaintheorem}. Due to Theorem~\ref{balak}, we can restrict ourselves to the case $q\in\{2,3,5,7\}$. Moreover, as the normalisers of non-split Cartan subgroups of $\GL_2(\F_2)$ are precisely its Borel subgroups, and as the case of Borel subgroups has already been treated by Theorem~\ref{lem1}, we can assume that $q\in\{3,5,7\}$. Suppose that there exists a prime $p>37$ for which $\bar{\rho}_{E,p}$ is not surjective. Then the image of $\bar{\rho}_{E,p}$ must be contained in the normaliser of a non-split Cartan subgroup of $\GL_2(\F_p)$. Proposition~\ref{int_j} now yields that the $j$-invariant of $E$ must be integral. Therefore, the elliptic curve $E$ gives rise to a $\Q$-rational point in $X_{\mathrm{sp}}^+(q)$ with integral $j$-invariant. By an appropriate choice of uniformisers, the $j$-invariant map $j:X_{\mathrm{sp}}^+(q)\rightarrow\proj^1$ can be explicitly described by one of the equations of Table~\ref{table:split} (the source of these equations is {\cite[p. 68]{chenthesis}}).
\begin{table}[h!]
\begin{tabular}{ |c|c| }
\hline
$q$ & $j$\\
\hline\hline
$3$ & $\frac{((t-9)(t+3))^3}{t^3}$\\
\hline
$5$ & $\frac{((t^2-5)(t^2+5t+10)(t+5))^3}{(t^2+5t+5)^5}$\\
\hline
$7$ & $\frac{((t^2-5t+8)(t^2-5t+1)(t^4-5t^3+8t^2-7t+7)(t+1))^3t}{(t^3-4t^2+3t+1)^7}$\\
\hline
\end{tabular}
\caption{Equations for the $j$-invariants of $X_{\mathrm{sp}}^+(q)$.}
\label{table:split}
\end{table}

Resorting to these equations, we are able to verify that there are only finitely many $\Q$-rational points in $X_{\mathrm{sp}}^+(q)$ with integral $j$-invariants. Moreover, the finitely many $j$-invariants associated to these points can be extracted from these equations: these are $-12288000, -884736, -32768, -5000, -1728, 0, 1728, 8000, 54000$ and $287496$. Of these, the only ones corresponding to elliptic curves without complex multiplication are $-5000$ and $-1728$. Thus, $j(E)\in\{-5000,-1728\}$. 

An example of an elliptic curve with $j$-invariant $-5000$ is the one given by the equation 
\begin{equation*}
E_1:y^2=x^3-x^2-208x+1412,
\end{equation*}
and an example of an elliptic curve with $j$-invariant $-1728$ is the one given by
\begin{equation*}
E_2:y^2=x^3-54x+216.
\end{equation*}
Upon consultation on the LMFDB database~\cite{lmfdb} --- where information about the image of mod $p$ Galois representations of elliptic curves was obtained using a method of Sutherland~\cite{suth} ---, we can observe that, if $p>37$, the representations $\bar{\rho}_{E_1,p}$ and $\bar{\rho}_{E_2,p}$ are both surjective. Recalling that any two elliptic curves without complex multiplication and sharing the same $j$-invariant are quadratic twists of each other, we conclude that $\bar{\rho}_{E,p}$ is surjective for every prime $p>37$, yielding a contradiction.
\end{proof}

It is worth highlighting that Theorem~\ref{balak} is only needed here to obtain an explicit upper bound for the non-surjective primes (which turns out to be $37$). If we were only interested in showing that there exists a constant $C$ such that $\bar{\rho}_{E,p}$ is surjective for every prime $p>C$ and every elliptic curve $E$ satisfying the conditions of Theorem~\ref{qmaintheorem}, then this could be achieved via Siegel's theorem as follows. Since the $j$-invariant map $j: X_{\rm{sp}}^+(13)\rightarrow\proj^1$ has more than two distinct points mapping to the point at infinity of $\proj^1$, Siegel's theorem asserts that there are only finitely many points in $X_{\rm{sp}}^+(13)(\Q)$ whose $j$-invariant is integral. Therefore, even without assuming that all the $\Q$-rational points of $X_{\rm{sp}}^+(13)$ are cuspidal or CM-points, we are still able to conclude that there are only finitely many isomorphism classes of elliptic curves satisfying the conditions of Theorem~\ref{qmaintheorem} and admitting a prime $p>37$ for which the Galois representation $\bar{\rho}_{E,p}$ is not surjective (recall that, under these conditions, the $j$-invariant of such an elliptic curve must be integral). We can now use Theorem~\ref{serthm} and the fact that, for elliptic curves without complex multiplication, the surjectivity of the Galois representation only depends on its isomorphism class to conclude the existence of our constant $C$.

\section{The case of $\Q$-curves}\label{therest}

We start by proving Theorem~\ref{noborel}. Let us just remark that if $K$ is a quadratic field and $E/K$ is an elliptic curve completely defined over $K$ and of degree $1$, then $E$ is defined over $\Q$. But theorems~\ref{borelmain}, \ref{cartanmain} and \ref{noborel} are already known to hold when $E$ is defined over $\Q$ (Theorem~\ref{cartanmain} for elliptic curves over $\Q$ is simply Theorem~\ref{qmaintheorem}, which we have just proved). Therefore, in everything that follows, whenever we speak of a $\Q$-curve, we will mean a $\Q$-curve that is \emph{not} defined over $\Q$. In the terminology of~\cite{lefourn}, these are known as \emph{strict} $\Q$-curves.
\subsection{Proof of Theorem~\ref{noborel}} 
In order to obtain Theorem~\ref{noborel} from the proposition above, we will use the following result of Le Fourn.
\begin{prop}[{\cite[Proposition~3.3]{lefourn}}]
Let $K$ be a quadratic field and let $E$ be a $\Q$-curve completely defined over $K$ and of square-free degree $d$. Assume, moreover, that the image of $\proj\bar{\rho}_{E,p}$ is contained in a Borel subgroup of $\PGL_2(\F_p)$ for some prime $p=11$ or $p\geq 17$ such that $p\nmid d$. Then $j(E)\in\loc_K$.
\end{prop}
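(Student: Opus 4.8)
The statement to prove is Le Fourn's Proposition 3.3, restated here: if $K$ is a quadratic field, $E/K$ a $\Q$-curve completely defined over $K$ of square-free degree $d$, and $\proj\bar\rho_{E,p}$ has image in a Borel subgroup of $\PGL_2(\F_p)$ for a prime $p=11$ or $p\geq 17$ with $p\nmid d$, then $j(E)\in\loc_K$. Since this is quoted as a result from the literature, the task is really to sketch how one would recover it, mimicking the Mazur-style formal immersion argument already developed in this section for $\Q$ and adapted by Ellenberg to the $\Q$-curve setting.

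The plan is to reduce, exactly as in the proof of Proposition~\ref{int_j}, to showing that $E$ has potentially good reduction at every finite prime of $K$, which for a $\Q$-curve forces integrality of $j(E)$. First I would analyse the primes of potential multiplicative reduction. At such a prime $\lambda\nmid p$, the restriction of $\proj\bar\rho_{E,p}$ to a decomposition group looks (up to the quadratic/trivial twist coming from the Tate parametrisation, and up to the cocycle $c_E$ that only affects the projective representation by scalars) like an upper-triangular representation with the two diagonal characters differing by the cyclotomic character; combined with the Borel hypothesis at $p$ one gets a strong congruence constraint on $\Norm_{K/\Q}(\lambda)$ modulo $p$, the analogue of Proposition~\ref{goodp}. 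So only finitely many residue characteristics can be problematic, and for $p$ large relative to $[K:\Q]=2$ the place above $p$ itself is ruled out by the cyclotomic-ramification argument in Proposition~\ref{goodp}. This isolates the potentially-multiplicative primes $\lambda$ whose norm is $\equiv\pm 1\pmod p$.

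Next, for each such bad prime $\lambda$, one uses that a $\Q$-curve of square-free degree $d$ with $\proj\bar\rho_{E,p}$ landing in a Borel gives rise, via Ellenberg's dictionary (section~\ref{review} and \cite{ellen_qcurv,ellen}), to a $K$-rational (or at worst $\loc_K[1/dp]$-integral) point on a modular curve of the shape $X_0(d)\times_{X(1)}X_0(p)$ or a suitable twist/quotient thereof — morally a point on $X_0(dp)$-type curve — which, because $E$ has potentially multiplicative reduction at $\lambda$, reduces to a cusp modulo $\lambda$. Then the formal immersion machinery takes over: one maps this modular curve to the winding quotient (or Eisenstein quotient) $A$ of the relevant Jacobian, which has finite Mordell–Weil group over $\Q$ hence over $K$ after a norm/restriction-of-scalars argument, so the image of our point is torsion; reducing modulo $\lambda$ (inert or split appropriately, with residue field controlled by the narrow class number and the residual degree above $2$, as in Theorem~\ref{noborel}) kills the torsion point, and the formal immersion property at the cusp forces the original point to be the cusp — contradiction. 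Hence there are no potentially-multiplicative primes, $E$ has potential good reduction everywhere, and $j(E)\in\loc_K$.

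The main obstacle, and the place where real work is needed beyond the $\Q$-case already done in this section, is establishing the formal immersion statement in the $\Q$-curve setting: one must identify the correct modular curve attached to the pair ``$\Q$-curve of degree $d$'' $+$ ``Borel image at $p$'', identify the right Eisenstein/winding quotient $A$, verify $A$ has finite Mordell–Weil rank over $K$ (this is where Merel/Kato-type inputs and the imaginary-vs-real distinction matter, though for the qualitative statement one only needs the winding quotient to be nonzero and its $q$-expansion non-degeneracy at the cusp), and check that the relevant map is a formal immersion at the cusp by a $q$-expansion computation together with the vanishing of $S_2(\Gamma_0(d))$-type spaces in the cases at hand — here the genus-zero-ness of $X_0(q)$ for $q\in\{2,3,5,7,13\}$ is replaced by the more delicate control of $d$. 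For the purposes of this paper, however, all of this is exactly \cite[Proposition~3.3]{lefourn}, so I would simply cite Le Fourn for the detailed argument and record above the shape of the proof so the reader sees it runs in parallel with Proposition~\ref{int_j}.
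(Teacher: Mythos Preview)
The paper does not give a proof of this proposition at all; it is quoted verbatim as \cite[Proposition~3.3]{lefourn} and used as a black box input to the proof of Theorem~\ref{noborel}. So there is nothing to compare against on the paper's side beyond the citation.

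Your sketch is in the right spirit (a Mazur--Ellenberg formal immersion argument on a modular curve of type $X_0(d)\times_{X(1)}X_0(p)$, mapping to a finite-Mordell--Weil quotient and deriving a contradiction at a prime of potentially multiplicative reduction), but one step is misguided. You invoke ``the analogue of Proposition~\ref{goodp}'' to obtain a congruence constraint on $\Norm_{K/\Q}(\lambda)$ modulo~$p$ at a multiplicative prime $\lambda$. Proposition~\ref{goodp} is a statement about the \emph{non-split Cartan} hypothesis: the constraint $\Norm_{K/\Q}(\lambda)^2\equiv 1\pmod p$ comes from the fact that a non-split Cartan element has $\F_p$-conjugate eigenvalues. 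Under a Borel hypothesis the local representation at a multiplicative prime is already upper-triangular, so there is no tension and no congruence constraint emerges. In Le~Fourn's actual argument there is no preliminary reduction to primes of special residue characteristic; the formal immersion step is carried out directly for any prime $\lambda\nmid 2dp$ at which $E$ might have potentially multiplicative reduction (with primes above $2$, $d$ and $p$ handled separately). So your first paragraph should be dropped, and the architecture of the remaining two paragraphs, together with the honest final admission that the details are in \cite{lefourn}, is adequate for a sketch.
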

The proof will be essentially an adaptation of an argument due to Mazur that can be found in sections 5, 6 and 7 of~\cite{maz_rat}.

From now to the end of this section, $K$ will be a quadratic number field, $E/K$ will be a $\Q$-curve completely defined over $K$, of square-free degree $d\geq 2$ and without complex multiplication, and $p\geq 13$ will be a prime number such that $p\nmid d$ and for which the image of $\proj\bar{\rho}_{E,p}$ is contained in a Borel subgroup of $\PGL_2(\F_p)$. We will assume that $p$ does not ramify in $K$. Consider the Galois representation $\bar{\rho}_{E,p}:G_K\rightarrow \GL_2(\F_p)$. As the image of $\proj\bar{\rho}_{E,p}$ is contained in a Borel subgroup of $\PGL_2(\F_p)$, we can choose a basis $P,Q$ of $E[p](\bar{K})$ such that $\langle P\rangle$ is a cyclic subgroup of $E(\bar{K})$ defined over $K$ (i.e., $\tau(\langle P\rangle)=\langle P\rangle$ for every $\tau\in G_K$). With respect to this basis, the representation $\bar{\rho}_{E,p}:G_K\rightarrow \GL_2(\F_p)$ has the shape
\begin{equation*}
\begin{pmatrix} \phi & *\\ 0 & \varphi\end{pmatrix},
\end{equation*}
where $\phi$ and $\varphi$ are two characters $G_K\rightarrow \F_p^{\times}$.
\begin{lem}
Let $\idp$ be a prime of $K$ dividing $p$. Then there exists a unique element $k\in\Z/(p-1)\Z$ and a character $\alpha:G_K\rightarrow \F_p^{\times}$ unramified at $\idp$ such that $\phi=\alpha\chi_p^k$, where $\chi_p$ stands for the mod $p$ cyclotomic character.
\end{lem}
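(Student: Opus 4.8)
The plan is to study $\phi$ one prime at a time: fix a decomposition subgroup $D_\idp\subseteq G_K$ above $\idp$ with inertia subgroup $I_\idp$, and analyse the restriction $\phi|_{I_\idp}\colon I_\idp\to\F_p^{\times}$. First I would observe that, since $\F_p^{\times}$ has order $p-1$, which is prime to $p$, while the wild inertia $P_\idp\subseteq I_\idp$ is a pro-$p$ group, the character $\phi|_{I_\idp}$ is trivial on $P_\idp$ and hence factors through the tame quotient $I_\idp^{\mathrm{t}}:=I_\idp/P_\idp$. As a profinite group $I_\idp^{\mathrm{t}}$ is procyclic (it is isomorphic to $\prod_{\ell\neq p}\Z_\ell$), so every continuous homomorphism $I_\idp^{\mathrm{t}}\to\F_p^{\times}$ factors through the finite cyclic quotient $I_\idp^{\mathrm{t}}/(I_\idp^{\mathrm{t}})^{p-1}\cong\Z/(p-1)\Z$; consequently the group of such homomorphisms is cyclic of order exactly $p-1$.

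Next I would bring in the standing hypothesis that $p$ is unramified in $K$. This forces $K_\idp/\Q_p$ to be unramified, so that $K_\idp(\zeta_p)/K_\idp$ is totally tamely ramified of degree $p-1$; equivalently, $\chi_p|_{I_\idp}$ is a surjection onto $\F_p^{\times}$, i.e.\ a character of $I_\idp^{\mathrm{t}}$ of exact order $p-1$. By the previous paragraph it therefore generates the cyclic group of continuous homomorphisms $I_\idp^{\mathrm{t}}\to\F_p^{\times}$, and hence there is a unique $k\in\Z/(p-1)\Z$ with $\phi|_{I_\idp}=\chi_p^{k}|_{I_\idp}$. I would then set $\alpha:=\phi\cdot\chi_p^{-k}\colon G_K\to\F_p^{\times}$; by construction $\alpha$ is trivial on $I_\idp$, i.e.\ unramified at $\idp$, and $\phi=\alpha\chi_p^{k}$. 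Uniqueness of the pair $(k,\alpha)$ is then immediate: if also $\phi=\alpha'\chi_p^{k'}$ with $\alpha'$ unramified at $\idp$, then $\chi_p^{k-k'}=\alpha'\alpha^{-1}$ is unramified at $\idp$, so $(\chi_p|_{I_\idp})^{k-k'}=1$, and since $\chi_p|_{I_\idp}$ has order $p-1$ this gives $k\equiv k'\pmod{p-1}$, whence also $\alpha=\alpha'$.

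The argument is essentially formal once this local picture is in place, and the only step that genuinely uses the hypotheses — and the one I would be most careful with — is the assertion that $\chi_p|_{I_\idp}$ has exact order $p-1$, so that it exhausts all $\F_p^{\times}$-valued characters of tame inertia. This is precisely where the assumption that $p$ is unramified in $K$ (i.e.\ $e(\idp/p)=1$) enters: if $p$ ramified in $K$, the character $\chi_p|_{I_\idp}$ would have strictly smaller order, not every character of $I_\idp^{\mathrm{t}}$ into $\F_p^{\times}$ would be one of its powers, and both the existence and the uniqueness of $k$ could fail. As an alternative to the abstract description of the characters of $I_\idp^{\mathrm{t}}$, one could instead split into cases according to the reduction type of $E$ at $\idp$ — good ordinary or (potentially) multiplicative, where one reads off $\phi|_{I_\idp}\in\{1,\chi_p|_{I_\idp}\}$ directly, and potentially good reduction, where one passes to the tame extension of $K_\idp$ over which $E$ acquires good reduction — but the uniform argument above is shorter and, notably, does not even use that $\phi$ arises from an isogeny.
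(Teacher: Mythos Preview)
Your argument is correct and complete, including the careful handling of uniqueness, but it proceeds along a different line than the paper. The paper argues via local class field theory: the Artin map identifies characters of $I_{\idp}$ into $\F_p^{\times}$ with continuous homomorphisms $\loc_{\idp}^{\times}\to\F_p^{\times}$, and one then shows (using that $p$ is unramified in $K$) that every such homomorphism factors through the norm $N\colon\loc_{\idp}^{\times}\to\Z_p^{\times}$, after which the familiar fact that every continuous character $\Z_p^{\times}\to\F_p^{\times}$ is a power of the cyclotomic character finishes the job. You instead bypass class field theory entirely and work directly with the structure of tame inertia: since wild inertia is pro-$p$ it dies in $\F_p^{\times}$, the tame quotient $I_{\idp}^{\mathrm t}\cong\prod_{\ell\neq p}\Z_\ell$ has $\Hom(I_{\idp}^{\mathrm t},\F_p^{\times})$ cyclic of order $p-1$, and $\chi_p|_{I_\idp}$ generates it because $p$ is unramified in $K$.

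Both routes isolate the same crucial point --- that the unramifiedness of $p$ in $K$ forces $\chi_p|_{I_\idp}$ to have exact order $p-1$ --- and you are right to flag this as the one genuinely nontrivial step. Your approach is more elementary and self-contained (no Artin map needed), and, as you observe, does not use that $\phi$ comes from an elliptic curve. The paper's CFT phrasing, on the other hand, meshes naturally with the later arguments in the section, where decompositions of $G_\lambda^{\mathrm{ab}}$ via local class field theory are used repeatedly.
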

\begin{proof}
Let $G_{\idp}$ be a decomposition subgroup of $G_K$ associated to $\idp$ and let $\loc_{\idp}$ denote the ring of integers of $K_{\idp}$, the completion of $K$ at $\idp$. The Artin map of class field theory gives us a continuous homomorphism $\loc_{\idp}^{\times}\rightarrow G_{\idp}^{\rm{ab}}$, from where we obtain another continuous map $\loc_{\idp}^{\times}\rightarrow\F_p^{\times}$ by composition with $\phi|_{G_{\idp}}$. Using the assumption that $p$ does not ramify in~$K$, it is easy to see that every continuous homomorphism $\loc_{\idp}^{\times}\rightarrow\F_p^{\times}$ must factor through $N:\loc_{\idp}^{\times}\rightarrow \Z_p^{\times}$, where $N$ stands for the norm map.  The result now follows from the fact that every continuous homomorphism $\Z_p^{\times}\rightarrow\F_p^{\times}$ is a power of the cyclotomic character (where we identify $\Z_p^{\times}$ with the inertia subgroup of $\Gal(\Q^{\rm{ab}}_p/\Q_p)$ via local class field theory).
\end{proof}
Let $\idp$ be a prime of $K$ lying above $p$. We now know that $\bar{\rho}_{E,p}$ has the shape
\begin{equation*}
\begin{pmatrix} \alpha\chi_p^k & *\\ 0 & \alpha^{-1}\chi_p^{1-k}\end{pmatrix},
\end{equation*}
where $\alpha$ is some character unramified at $\idp$. If $p$ remains prime in $K$, then, trivially, $\alpha$ is unramified at every prime of $K$ lying above $p$. The next lemma asserts that this is also true even if $p$ splits.

\begin{lem}
Using the above notation, $\alpha$ is unramified at every prime of $K$ lying above~$p$.
\end{lem}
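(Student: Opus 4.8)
The plan is to exploit the $\Q$-curve structure to compare $\bar\rho_{E,p}$ with its Galois conjugate, and to use crucially that $\proj\bar\rho_{E,p}$ — unlike $\bar\rho_{E,p}$ — is defined on all of $G_{\Q}$. First observe that there is something to prove only when $p$ splits in $K$: if $p$ is inert then $\idp$ is the unique prime of $K$ above $p$, and the statement is the content of the preceding lemma. So assume $p\loc_K=\idp\idp'$, let $\sigma$ be the non-trivial element of $\Gal(K/\Q)$ (which then interchanges $\idp$ and $\idp'$), and fix a lift $\tilde\sigma\in G_{\Q}$ of $\sigma$; since $K/\Q$ is Galois, $\tilde\sigma$ normalises $G_K$. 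As $E$ is completely defined over $K$ of degree $d$, I would take $\mu_\sigma\colon {}^{\sigma}E\to E$ of degree $d$; since $p\nmid d$, this isogeny induces a $G_K$-equivariant isomorphism ${}^{\sigma}E[p]\xrightarrow{\sim}E[p]$, and the recipe of Section~\ref{review} produces a function $\bar\varpi_{E,p}\colon G_{\Q}\to\GL(E[p])$, $\bar\varpi_{E,p}(g)(v)=\mu_g({}^{g}v)$, with $\bar\varpi_{E,p}|_{G_K}=\bar\rho_{E,p}$ and $P\bar\varpi_{E,p}=\proj\bar\rho_{E,p}$.

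The first step is to establish the conjugation identity
\begin{equation*}
\bar\varpi_{E,p}(\tilde\sigma)^{-1}\,\bar\rho_{E,p}(\tau)\,\bar\varpi_{E,p}(\tilde\sigma)=\bar\rho_{E,p}(\tilde\sigma^{-1}\tau\tilde\sigma)\qquad(\tau\in G_K),
\end{equation*}
which follows from a short computation starting from $\bar\varpi_{E,p}(\tilde\sigma)(v)=\mu_\sigma({}^{\tilde\sigma}v)$ together with the fact that $\mu_\sigma$ is defined over $K$ (so it commutes with the action of $G_K$). The second step uses the Borel hypothesis: since $\proj\bar\rho_{E,p}=P\bar\varpi_{E,p}$ has image in a Borel subgroup of $\PGL_2(\F_p)$, there is a line $\ell\subseteq E[p]$ with $\bar\varpi_{E,p}(g)\ell=\ell$ for every $g\in G_{\Q}$; restricting to $G_K$, where $\bar\varpi_{E,p}=\bar\rho_{E,p}$, shows $\ell$ is $G_K$-stable, and this is exactly the line $\langle P\rangle$ of the setup (indeed, that $\langle P\rangle$ may be chosen defined over $K$ is this observation). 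In particular $\bar\varpi_{E,p}(\tilde\sigma)$ maps $\langle P\rangle$ onto itself. Feeding this into the conjugation identity and computing the character through which $G_K$ acts on $\langle P\rangle$ in two ways yields $\phi(\tau)=\phi(\tilde\sigma^{-1}\tau\tilde\sigma)$ for all $\tau\in G_K$; that is, $\phi$ is invariant under conjugation by $\tilde\sigma$ (and, being abelian, the invariance is independent of the chosen lift).

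To finish, I would write $\phi=\alpha\chi_p^{k}$ as in the preceding lemma. The cyclotomic character $\chi_p$ on $G_K$ is the restriction of the cyclotomic character of $G_{\Q}$, hence is itself invariant under $\tilde\sigma$-conjugation; comparing $\phi$ with its $\tilde\sigma$-conjugate then forces $\alpha(\tau)=\alpha(\tilde\sigma^{-1}\tau\tilde\sigma)$ for all $\tau\in G_K$. Finally, conjugation by $\tilde\sigma$ carries any inertia subgroup at $\idp'$ to an inertia subgroup at $\sigma(\idp')=\idp$; since $\alpha$ is an abelian character unramified at $\idp$, it vanishes on every inertia subgroup above $\idp$, hence on every inertia subgroup above $\idp'$ as well, so $\alpha$ is unramified at $\idp'$ too. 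As $\idp$ and $\idp'$ are all the primes of $K$ above $p$, this proves the lemma.

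The step I expect to be the main obstacle is the first one. One must be careful that $\bar\varpi_{E,p}$ is merely a function and not a homomorphism, that the datum $\mu_\sigma$ and the lift $\tilde\sigma$ are non-canonical, and above all that $E[p]$ and ${}^{\sigma}E[p]$ are genuinely different $G_K$-modules: after transport of structure along $v\mapsto {}^{\tilde\sigma}v$, the action of $\tau\in G_K$ on ${}^{\sigma}E[p]$ corresponds to the action of $\tilde\sigma^{-1}\tau\tilde\sigma$ on $E[p]$, and it is this that produces the conjugate character. Once the conjugation identity is set up cleanly, the remainder is formal manipulation of abelian characters together with the elementary fact that a character unramified at one prime above $p$ is unramified at its Galois conjugates.
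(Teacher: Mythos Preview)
Your proof is correct and follows essentially the same approach as the paper: both use the isogeny $\mu_\sigma$ together with the Borel hypothesis on $\proj\bar\rho_{E,p}$ to show that the character $\phi$ (hence $\alpha$) is invariant under $\tilde\sigma$-conjugation, and then transport unramifiedness from $\idp$ to its Galois conjugate. Your packaging via the conjugation identity $\bar\varpi_{E,p}(\tilde\sigma)^{-1}\bar\rho_{E,p}(\tau)\bar\varpi_{E,p}(\tilde\sigma)=\bar\rho_{E,p}(\tilde\sigma^{-1}\tau\tilde\sigma)$ is slightly more abstract than the paper's direct computation with the point ${}^{\sigma^{-1}}P$, but the underlying argument is the same.
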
 
\begin{proof}
This is only true because we are assuming that the image of $\proj\bar{\rho}_{E,p}$ is contained in a Borel subgroup of $\PGL_2(\F_p)$. Let us start by recalling the notation introduced in Section~\ref{review}. For each element $\tau\in G_{\Q}$, we have a $K$-isogeny $\mu_{\tau}:{}^{\tau}E\rightarrow E$ satisfying the following conditions: if the restriction of $\tau$ to $K$ is the trivial automorphism of $K$, then $\mu_{\tau}$ is the identity; if, on the other hand, the restriction of $\tau$ to $K$ is the non-trivial automorphism of $K$, then $\mu_{\tau}$ has degree $d$ and, moreover, if $\tau'\in G_{\Q}$ is another element restricting to the non-trivial automorphism of $K$, then $\mu_{\tau}=\mu_{\tau'}$. Note that as the image of $\proj\bar{\rho}_{E,p}$ is contained in a Borel subgroup of $\PGL_2(\F_p)$, we have $\mu_{\tau}({}^{\tau}P)\in\langle P\rangle$ for every $\tau\in G_{\Q}$.

As the result trivially holds when $p$ remains prime in $K$, and as we are assuming that $p$ does not ramify in $K$, we will assume that $p$ splits in $K$. If this is the case, let $\frak{q}$ be the other prime of $K$ lying above $p$. Let $\sigma\in G_{\Q}$ be an element which restricts to the non-trivial automorphism of $K$. If $D_{\idp}$ is a decomposition subgroup of $G_K$ over $\idp$, then $D_{\frak{q}}:=\sigma D_{\idp}\sigma^{-1}$ is a decomposition subgroup of $G_K$ over $\frak{q}$. Moreover, if $I_{\idp}$ and $I_{\frak{q}}$ denote the corresponding inertia subgroups, we have $I_{	\frak{q}}=\sigma I_{\idp} \sigma^{-1}$. Therefore, every element of $I_{\frak{q}}$ can be uniquely written in the form $\sigma\tau\sigma^{-1}$ with $\tau\in I_{\idp}$. Let $\tau\in I_{\idp}$.  As any $\tau\in I_{\idp}$ acts as $\chi_p(\tau)^k$ on $\langle P\rangle$, and as $\mu_{\sigma}({}^{\sigma}P)\in\langle P\rangle$, we get  
\begin{equation*}
{}^{\tau\sigma^{-1}}P={}^{\tau}({}^{\sigma^{-1}}P)=\chi_p^k(\tau)({}^{\sigma^{-1}}P).
\end{equation*}
But then
\begin{equation*}
{}^{\sigma\tau\sigma^{-1}}P=\chi_p(\tau)^kP
\end{equation*}
for every $\tau\in I_{\idp}$. Therefore, the restriction of $\phi$ to $I_{\frak{q}}$ is $\chi_p^k$, proving that $\phi=\alpha\chi_p^k$ for some character $\alpha:G_K\rightarrow \F_p^{\times}$ unramified at every prime dividing $p$.
\end{proof}
\begin{lem}\label{kprops}
Using the above notation, there are integers $e\mid12$ and $a,b\in\{0,\ldots, e\}$ such that 

$(1)$ $e\leq 6$, 

$(2)$ $a+b=e$, 

$(3)$ $ek\equiv a\pmod{p-1}$ and 

$(4)$ $e(1-k)\equiv b\pmod{p-1}$.
\end{lem}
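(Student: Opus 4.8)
The plan is to read the four conditions off the restriction of the isogeny character $\phi$ to an inertia group above $p$, following Mazur's treatment of isogeny characters in sections~5--7 of~\cite{maz_rat} together with the classification of finite flat group schemes of order $p$. Fix a prime $\idp\mid p$ of $K$, a decomposition group $D_{\idp}\subseteq G_K$ with inertia subgroup $I_{\idp}$, and set $F:=K_{\idp}$; by hypothesis $F/\Q_p$ is unramified. As shown above, $\phi=\alpha\chi_p^k$ with $\alpha$ unramified at $\idp$, so $\phi|_{I_{\idp}}=\chi_p^k|_{I_{\idp}}$, and since $\det\bar\rho_{E,p}=\chi_p$ also $\varphi|_{I_{\idp}}=\chi_p^{1-k}|_{I_{\idp}}$. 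As $\langle P\rangle$ is a one-dimensional $\F_p$-subspace stable under $I_{\idp}$, wild inertia acts on it through a quotient of order prime to $p$, hence trivially; thus $\phi|_{I_{\idp}}$ factors through tame inertia, and it suffices to pin it down there.

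The next step is to pass to a finite extension $L/F$ over which $E$ acquires semistable reduction. Since $p\ge 13>3$, such an $L$ can be taken totally tamely ramified over $F$ of degree $e\in\{1,2,3,4,6\}$, the semistability defect of $E$ at $\idp$ (cf.\ work of Kraus, or~\cite{ser_prop}); this $e$ divides $12$ and is $\le 6$, giving~$(1)$. Over $L$ the curve $E_L$ has good or multiplicative reduction, and the good supersingular case is impossible, because then $I_L$ would act on $E_L[p]$ through the level-$2$ fundamental characters --- which have order $p^2-1>p-1$, hence are not $\F_p^{\times}$-valued --- and therefore irreducibly (cf.\ \cite{ser_prop}), contradicting the $I_L$-stable line $\langle P\rangle$.

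In the remaining cases the Galois module $\langle P\rangle(\bar K)$ extends to a finite flat group scheme $\mathcal G$ of order $p$ over the ring of integers $\loc_L$ of $L$: in the good (necessarily ordinary) case one takes the scheme-theoretic closure of $\langle P\rangle$ in the abelian scheme $\mathcal E_L$, while in the multiplicative case the Tate parametrisation identifies $\langle P\rangle(\bar K)$ with $\mu_p$ or with $\Z/p$. Since $\loc_L$ has absolute ramification index $e\le 6<p-1$, Raynaud's classification shows that $I_L$ acts on $\mathcal G(\bar K)\setminus\{0\}$ via $\vartheta_L^{\,a}$ for some integer $a$ with $0\le a\le e$, where $\vartheta_L\colon I_L\to\F_p^{\times}$ is the level-$1$ fundamental character of $L$, of exact order $p-1$. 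On the other hand, $F/\Q_p$ being unramified, $\chi_p|_{I_F}$ is the level-$1$ fundamental character of $F$, and $L/F$ being totally ramified of degree $e$, its restriction to $I_L$ is $\vartheta_L^{\,e}$; restricting $\phi|_{I_{\idp}}=\chi_p^{k}|_{I_{\idp}}$ to $I_L$ therefore gives
\begin{equation*}
\vartheta_L^{\,a}=\phi|_{I_L}=\bigl(\chi_p|_{I_L}\bigr)^{k}=\vartheta_L^{\,ek},
\end{equation*}
whence $ek\equiv a\pmod{p-1}$, which is~$(3)$. Putting $b:=e-a$, we get $a,b\in\{0,\dots,e\}$ with $a+b=e$, which is~$(2)$, and $e(1-k)=e-ek\equiv e-a=b\pmod{p-1}$, which is~$(4)$.

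The main obstacle is the reduction step together with the applicability of Raynaud's theorem: one must check that $L$ may be chosen tamely ramified of degree strictly less than $p-1$ (so that the bound $0\le a\le e$ is legitimate), and one must make sure that, although $E$ itself may have additive reduction at $\idp$, the stable line $\langle P\rangle$ genuinely arises as the generic fibre of a finite flat group scheme of order $p$ over $\loc_L$ --- via scheme-theoretic closure in the good-reduction case and via the Tate parametrisation in the multiplicative case --- so that Raynaud's classification can be invoked at all.
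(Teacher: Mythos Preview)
Your proof is correct and follows the same approach as the paper: pass to a totally tamely ramified extension $L/K_{\idp}$ of degree $e\mid 12$, $e\le 6$, identify $\chi_p|_{I_L}$ with $\vartheta_L^{\,e}$, and invoke Raynaud's classification to pin down $\phi|_{I_L}=\vartheta_L^{\,a}$ with $0\le a\le e$. The only differences are cosmetic: the paper already knows $E$ has potentially good reduction at~$\idp$ (from Le~Fourn's integrality result), so it does not treat the multiplicative case or rule out supersingular reduction explicitly as you do, and it obtains $b$ by a second application of Raynaud to the quotient character~$\varphi$ and then deduces $a+b=e$ from the size bound $a+b\le 2e\le 12\le p-1$, rather than simply setting $b:=e-a$ as you do.
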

\begin{proof}
We know that $E$ has potentially good reduction at $\idp$. Therefore, after taking a field extension $L$ of $K_{\idp}$ with ramification degree dividing  $12$, but at most $6$, the curve $E$ acquires good reduction at $p$. Let $e$ denote the absolute ramification degree of $L$. As we are assuming that $p$ does not ramify in $K$, the integer $e$ is the ramification degree of $L$ over $K_{\idp}$. Let $I_{\idp}$ and $I_L$ denote the inertia subgroups of $G_{K_{\idp}}$ and $G_L$, respectively, and let $I_{\idp}^t$ and $I_L^t$ denote the respective tame inertia groups. Of course, $\phi$ and $\varphi$ factor through $I_{\idp}^t$. Let $\theta$ denote the fundamental character of level $1$ for $I_L$. We have $\chi_p=\theta^{e}$. Therefore, $\phi|_{I_L}=\theta^{ek}$ and $\varphi|_{I_L}=\theta^{e(1-k)}$. By a theorem of Raynaud, there are integers $a,b\in\{0,\ldots, e\}$ such that
\begin{equation*}
ek\equiv a\pmod{p-1}\quad\text{and}\quad e(1-k)\equiv b\pmod{p-1}.
\end{equation*}
In particular, we have $a+b\equiv e\pmod{p-1}$. However, $a+b\leq 2e\leq 12\leq p-1$, yielding \begin{equation*}a+b=e\leq 6,\end{equation*} as we wanted.
 
\end{proof}
Following the notation of Mazur~\cite{maz_rat}, we set $m:=(p-1)/2$, $n:=\num((p-1)/12)$ and $t:=m/n$.
\begin{lem}[cf.~{\cite[Lemma~5.3]{maz_rat}}]\label{unrever}
$\alpha^{2t}$ is unramified everywhere.
\end{lem}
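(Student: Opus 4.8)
The plan is to adapt Mazur's proof of~{\cite[Lemma~5.3]{maz_rat}}, with the character $\alpha$ here playing the role of the rational isogeny character there. First I would reduce the statement to a purely local one. By the preceding lemmas we have $\phi=\alpha\chi_p^k$ and $\varphi=\alpha^{-1}\chi_p^{1-k}$, and $\alpha$ is unramified at every prime of $K$ above $p$. Since $\det\bar{\rho}_{E,p}=\chi_p$ is unramified at every prime $\ell\nmid p$, for such $\ell$ the restriction of $\phi$ (resp.\ of $\varphi$) to an inertia subgroup $I_{\ell}$ agrees with $\alpha|_{I_{\ell}}$ (resp.\ with $\alpha^{-1}|_{I_{\ell}}$), and by N\'eron--Ogg--Shafarevich $\bar{\rho}_{E,p}$, hence $\alpha$, is unramified at every prime of good reduction. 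Thus $\alpha$ is ramified only at primes of bad reduction not dividing $p$. Finally, since $n=\num((p-1)/12)=(p-1)/\gcd(p-1,12)$ and $m=(p-1)/2$, one has $2t=\gcd(p-1,12)$, which divides $12$; as $\alpha^{p-1}=1$, a B\'ezout relation then shows it is enough to prove $\alpha^{12}|_{I_{\ell}}=1$ for every prime $\ell\nmid p$ of bad reduction.

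Next I would dispatch the two possible reduction types. If $E$ has potentially multiplicative reduction at $\lambda\nmid p$, then $E_{/K_{\lambda}}$ is a quadratic (possibly trivial) twist of a Tate curve, so $\bar{\rho}_{E,p}|_{G_{K_{\lambda}}}$ is conjugate to $\begin{pmatrix}\psi\chi_p & *\\ 0 & \psi\end{pmatrix}$ with $\psi^2=1$; restricting to $I_{\lambda}$ and using $\chi_p|_{I_{\lambda}}=1$ gives $\alpha^2|_{I_{\lambda}}=1$, hence $\alpha^{12}|_{I_{\lambda}}=1$.

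If instead $E$ has additive but potentially good reduction at $\lambda\nmid p$, then by the criterion for good reduction in terms of torsion points (cf.\ \cite{ser_prop}) $E$ acquires good reduction over $K_{\lambda}(E[m])$ for any $m\in\{3,4\}$ coprime to the residue characteristic of $\lambda$, an extension whose degree divides $|\GL_2(\Z/m\Z)|\in\{48,96\}$ and is therefore prime to $p$ (as $p\geq 13$). Hence $\bar{\rho}_{E,p}(I_{\lambda})$ is a finite subgroup of a Borel subgroup $B$ of $\GL_2(\F_p)$ of order prime to $p$; being of order prime to $p$, it is conjugate into the diagonal torus, so in particular it is abelian. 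Invoking the standard description of the semistability defect at $\lambda$ (cf.\ \cite{ser_prop}), its order divides $12$ when the residue characteristic of $\lambda$ is $\geq 5$ or equal to $3$; the only further possibilities, occurring in residue characteristic $2$, are the non-abelian inertia images of order $8$ and $24$ (isomorphic to $Q_8$ and $\SL_2(\F_3)$), and these cannot lie in $B$, because the commutator subgroup of $B$ is its unipotent radical, an elementary abelian $p$-group, whereas $[Q_8,Q_8]$ and $[\SL_2(\F_3),\SL_2(\F_3)]$ have order $2$ and $8$. Consequently $\bar{\rho}_{E,p}(I_{\lambda})$ is cyclic of order dividing $12$, so $\alpha|_{I_{\lambda}}$ has order dividing $12$ and $\alpha^{12}|_{I_{\lambda}}=1$.

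Combining the two cases, $\alpha^{12}$ is trivial on every inertia subgroup of $G_K$, and therefore so is $\alpha^{\gcd(p-1,12)}=\alpha^{2t}$; together with the fact (already established) that $\alpha$ is unramified above $p$, this shows $\alpha^{2t}$ is unramified everywhere. The step I expect to require the most care is the potentially-good case in residue characteristic $2$: there the crude bound $12$ on the order of the inertia image fails, and one genuinely has to exploit the shape of $B$ --- concretely, the oddness of its unipotent radical --- to eliminate the quaternionic and binary-tetrahedral images attached to the exceptional semistability defects $8$ and $24$.
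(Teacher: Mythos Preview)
Your proof is correct but follows a genuinely different route from the paper. The paper reproduces Mazur's modular-curve argument: it uses the tower $X_1(p)\to X_2(p)\to X_0(p)$ over $\Spec\Z[1/p]$, with the intermediate cover $X_2(p)\to X_0(p)$ \'etale of degree $n$, so that the field $L$ over which a preimage in $X_1(p)$ is defined has ramification (away from $p$) dividing $t=m/n$; then the isomorphism $E\cong E'$ over $L$ lies in $H^1(\Gal(L/K),\{\pm 1\})$, forcing $\alpha^t|_{I_\lambda}$ to be quadratic.

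You instead carry out a purely local analysis of the inertia image. After reducing to $\alpha^{12}|_{I_\lambda}=1$ via the identity $2t=\gcd(p-1,12)$, you match the diagonal characters on inertia with the Tate description in the potentially multiplicative case, and in the potentially good case you use Serre--Tate to see that $\bar{\rho}_{E,p}(I_\lambda)$ has order prime to $p$, hence is conjugate into the diagonal torus of the Borel and in particular abelian; the classification of the semistability defect then forces its order to divide $12$, since the only orders not dividing $12$ (namely $8$ and $24$ in residue characteristic $2$) correspond to the non-abelian groups $Q_8$ and $\SL_2(\F_3)$. Your approach is more elementary in that it avoids the geometry of $X_2(p)$ and works directly with the local Galois image; the paper's approach has the virtue of being uniform in the reduction type and tying back to the ambient modular-curve framework used elsewhere in the argument. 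One small redundancy: once you have shown the image is conjugate into the torus, the commutator argument for excluding $Q_8$ and $\SL_2(\F_3)$ is superfluous, though it is a pleasant alternative.
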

\begin{proof}
The proof of this lemma is exactly the same as the one given by Mazur in~{\cite[Lemma~5.3]{maz_rat}}. For convenience of the reader, we reproduce it here. Let $S:=\Spec\Z[1/p]$. Consider the finite flat cyclic covering $X_1(p)_{/S}\rightarrow X_0(p)_{/S}$ of degree $(p-1)/2$. There is an intermediate cover
\begin{equation*}
X_1(p)_{/S}\rightarrow X_2(p)_{/S}\rightarrow X_0(p)_{/S}.
\end{equation*}
The only properties of the covering $X_2(p)_{/S}\rightarrow X_0(p)_{/S}$ that we are going to use are the following: it is a finite \'etale morphism of smooth $S$-schemes and its Galois group is isomorphic to the cyclic group $\Z/n\Z$, where $n:=\num((p-1)/12)$. This yields that the degree of $X_1(p)\rightarrow X_2(p)$ is $t$.

Our curve $E$ gives rise to a point $x=[(E,C_p)]\in X_0(p)(K)$. As all the coverings are cyclic, there exists a finite abelian extension $L/K$ for which there is a point $y=[(E',P')]\in X_1(p)(L)$ mapping to $x$. Moreover, as $X_2(p)_{/S}\rightarrow X_0(p)_{/S}$ is finite \'etale, the ramification degree of $L/K$ at any prime of characteristic different from $p$ divides $t$, and so it also divides~$6$. Now, as $y$ maps to $x$, there is an $L$-isomorphism $f:E\rightarrow E'$ mapping $C_p$ to $\langle P'\rangle$. The $L$-isomorphism $f$ is associated to an element of $H^1(\Gal(L/K),\Aut_{L}(E))$. However, $\Aut_L(E)=\{\pm 1\}$, as $E$ does not have complex multiplication. Therefore, given a prime $\lambda$ of $K$ of characteristic different from $p$, we find that $\alpha^t|_{I_{\lambda}}$ is a quadratic character, yielding that $\alpha^{2t}$ is unramified at $\lambda$. As we already know that $\alpha$ is unramified at any prime of characteristic $p$, we get the result.
\end{proof}

Let us just review what we have so far. The mod $p$ Galois representation $\bar{\rho}_{E,p}$ has the shape
\begin{equation*}
\begin{pmatrix} \alpha\chi_p^k & *\\ 0 & \alpha^{-1}\chi_p^{1-k}\end{pmatrix},
\end{equation*}
where $\chi_p$ is the mod $p$ cyclotomic character, $\alpha$ is a character such that $\alpha^{2t}$ is unramified everywhere, and $k$ satisfies the properties listed in Lemma~\ref{kprops}.

Let $\lambda$ be a prime of $K$ of characteristic different from $p$. Write $G_{\lambda}$ for a decomposition subgroup of $G_K$ over $\lambda$. Let $\alpha_{\lambda}$ denote the restriction of the character $\alpha$ to $G_{\lambda}$. As in section $6$ of~\cite{maz_rat}, we are going to split it in its ramified and unramified part. From local class field theory, we have a (non-unique) decomposition
\begin{equation*}
G_{\lambda}^{\rm{ab}}\cong \loc_{\lambda}^{\times}\times \hat{\Z},
\end{equation*}
where $\loc_{\lambda}$ denotes the ring of integers of $K_{\lambda}$. Sticking to the notation of Mazur, we write $\alpha_{\lambda}=\gamma_{\lambda}\cdot b_{\lambda}$, where the character $\gamma_{\lambda}$ factors through $\loc_{\lambda}^{\times}$ in the decomposition above and $b_{\lambda}$ is unramified. Lemma~\ref{unrever} implies that $\gamma_{\lambda}$ has order dividing $2t$. Let $L$ denote the splitting field of $\gamma_{\lambda}$. This is a totally ramified extension of $K_{\lambda}$ of degree dividing $2t$.
\begin{lem}\label{splitL}
Using the above notation, the elliptic curve $E$ has good reduction over $L$.
\end{lem}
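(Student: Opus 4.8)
The idea is that after base-changing to $L$, the character $\gamma_\lambda$ becomes trivial, so $\alpha|_{G_L}$ is unramified; combined with what we already know this will force $E_{/L}$ to have everywhere unramified $p$-torsion action from inertia at $\lambda$, and then the standard criterion for good reduction (Néron--Ogg--Shafarevich) will finish the job. First I would recall that $E$ has potentially good reduction at $\lambda$: indeed, if $E$ had potentially multiplicative reduction at $\lambda$, the Tate curve description would force the image of inertia $I_\lambda$ under $\bar\rho_{E,p}$ to contain a nontrivial unipotent element (coming from the $q$-parameter, which has valuation coprime to $p$ since $\lambda\nmid p$), contradicting the fact that the semisimplification controls ramification — so it suffices to treat the potentially good case, and then good reduction is detected by the inertia action on $E[p]$ for $p\geq 3$.

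**Main step.** Over $K_\lambda$, the restriction of $\bar\rho_{E,p}$ to inertia $I_\lambda$ has the shape $\begin{pmatrix}\alpha_\lambda\chi_p^k & * \\ 0 & \alpha_\lambda^{-1}\chi_p^{1-k}\end{pmatrix}|_{I_\lambda}$, and since $\lambda\nmid p$ the cyclotomic character $\chi_p$ is unramified at $\lambda$, so the inertia action is governed entirely by $\alpha_\lambda|_{I_\lambda}=\gamma_\lambda$ and by the extension class $*$. By construction $L$ is the splitting field of $\gamma_\lambda$, so $\gamma_\lambda|_{G_L}$ is trivial, and by Lemma~\ref{unrever} the unramified part $b_\lambda$ contributes nothing on inertia; hence $\bar\rho_{E,p}|_{I_L}$ is unipotent, i.e. of the form $\begin{pmatrix}1 & * \\ 0 & 1\end{pmatrix}$. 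To upgrade this to \emph{triviality} of $\bar\rho_{E,p}|_{I_L}$ — which is what Néron--Ogg--Shafarevich needs — I would argue that potential good reduction means $E$ acquires good reduction over \emph{some} finite extension of $K_\lambda$, and over such an extension the inertia image is trivial; the inertia image of $\bar\rho_{E,p}$ for a curve with potential good reduction and $\ell\nmid p$ is a finite cyclic (or, in the $j=0,1728$ cases, slightly larger) group whose order divides $24$, and in particular has order prime to $p$, so it cannot contain a nontrivial unipotent element of $\GL_2(\F_p)$ (which has order $p$). Therefore the unipotent part $*$ over $I_L$ must already vanish, so $\bar\rho_{E,p}|_{I_L}$ is trivial.

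**Conclusion.** Once $\bar\rho_{E,p}|_{I_\mu}$ is trivial for every prime $\mu$ of $L$ above $\lambda$, the Néron--Ogg--Shafarevich criterion (valid since $p\geq 13>2$ and $\mu\nmid p$) gives that $E$ has good reduction at $\mu$, i.e. $E_{/L}$ has good reduction. Here one uses that $L/K_\lambda$ is a local extension, so "good reduction over $L$" is a purely local statement at the prime(s) above $\lambda$; one should phrase the lemma accordingly (or note that $E$ already has good reduction away from $\lambda$ by the choice of $\lambda$ as a "bad" prime, so only $\lambda$ is at issue).

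**Expected main obstacle.** The delicate point is passing from "$\bar\rho_{E,p}|_{I_L}$ is unipotent" to "$\bar\rho_{E,p}|_{I_L}$ is trivial", i.e. showing the extension class $*$ dies over $L$. The clean way is precisely the order argument above: the image of inertia under $\bar\rho_{E,p}$ for a curve with potential good reduction at $\lambda\nmid p$ is finite of order prime to $p$ (this is classical — it is the "local monodromy" / Serre--Tate, and the order divides $24$), so it has no element of order $p$, hence no nontrivial unipotent; combined with the diagonal part being killed over $L$ this forces the full restriction to $I_L$ to be trivial. I expect the write-up to be short, essentially citing Néron--Ogg--Shafarevich and the finiteness of the local monodromy group, and this is presumably how Mazur's original argument (which the author says to follow) proceeds.
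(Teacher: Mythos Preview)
Your core argument is correct but takes a different route from the paper. The paper argues by contradiction via the N\'eron model: over $L$ the representation has shape $\begin{pmatrix} b_\lambda\chi_p^k & * \\ 0 & b_\lambda^{-1}\chi_p^{1-k}\end{pmatrix}$, so passing to the unramified extension $F/L$ cutting out $b_\lambda\chi_p^k$ produces a $K$-rational (over $F$) point of order $p$ on $E$; reducing via the N\'eron model (stable under the \'etale base change $F/L$) gives a $p$-torsion point on the special fibre $\tilde E(\F_F)$, which is impossible for additive reduction since $p\geq 13$ is prime to both the order of the component group and to $|\mathbb{G}_a(\F_F)|$. Your approach instead uses N\'eron--Ogg--Shafarevich together with the fact that the inertia image for a curve of potential good reduction at $\lambda\nmid p$ has order prime to $p$, so the unipotent restriction to $I_L$ must in fact be trivial. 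Both arguments ultimately rest on the same facts (finite monodromy of order prime to $p$), but the paper's version avoids invoking N\'eron--Ogg--Shafarevich explicitly, while yours is perhaps more conceptual.

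One small correction: your side argument for potential good reduction is both unnecessary and not quite right. It is unnecessary because Le~Fourn's result (stated just before this passage) already gives $j(E)\in\loc_K$, hence potential good reduction everywhere. It is not quite right because the claim that the Tate parameter has valuation coprime to $p$ ``since $\lambda\nmid p$'' is false --- the valuation $-v_\lambda(j(E))$ can be any positive integer --- and the phrase ``contradicting the fact that the semisimplification controls ramification'' does not name an actual contradiction. Both your approach and the paper's genuinely need potential good reduction as input (the paper's ``clearly impossible'' step fails for multiplicative reduction, where $\mathbb{G}_m(\F_F)$ or the component group may well have $p$-torsion), so you should simply cite Le~Fourn rather than reprove it.
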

\begin{proof}
Suppose, for the sake of contradiction, that $E$ does not have good reduction over $L$. Let $\F_q$ denote the residue field of $K$ ($q$ being its size) and $\tilde{E}/\F_q$ denote the special fibre of the N\'eron model of $E$ over $\loc_L$ (as $L$ is totally ramified, the residue field of $L$ is that of $K$). Note that we have
\begin{equation*}
\bar{\rho}_{E,p}|_{\Gal(\bar{L}/L)}\sim\begin{pmatrix} b_{\lambda}\chi_p^k & *\\ 0 & b_{\lambda}^{-1}\chi_p^{1-k}\end{pmatrix}.
\end{equation*}
Let $F$ be the splitting field of $b_{\lambda}\chi_p^k$. Then $F$ is an unramified extension of $L$ and $E(F)$ has a $p$-torsion point. Moreover, as N\'eron models are stable under \'etale base change, the special fibre of the N\'eron model of $E$ over $\loc_F$ is $\tilde{E}_{/\F_F}$, where $\loc_F$ is the ring of integers of $F$ and $\F_F$ is its residuel field. We conclude that there is a $p$-torsion point in $\tilde{E}(\F_F)$, which is clearly impossible when $E$ has bad reduction at $L$. Therefore, $E$ acquires good reduction at $L$.
\end{proof}
As a consequence, $\bar{\rho}_{E,p}|_{\Gal(\bar{L}/L)}$ factors through $\Gal(L^{\rm{unr}}/L)$. The Galois group $\Gal(L^{\rm{unr}}/L)$ is generated by the Frobenius automorphism $\Frob_{\lambda}$.
\begin{lem}\label{congruences}
Let $c$ denote the narrow class number of $K$. Then
\newline

$(1)$ $b_{\lambda}(\Frob_{\lambda})q^k+b_{\lambda}(\Frob_{\lambda})^{-1}q^{1-k}\equiv\Tr(\Frob_{\lambda})\pmod{p}$; and

$(2)$ $q^{12ck}+q^{12c(1-k)}\equiv\Tr(\Frob_{\lambda}^{12c})\pmod{p}$,
\newline

\noindent where $q$ is the size of the residue field of $K_{\lambda}$ and $\Tr(\Frob_{\lambda})\in \Z$ is the trace of the action of the Frobenius element of $\Gal(L^{\rm{unr}}/L)$ on the $p$-adic Tate module of $E$.
\end{lem}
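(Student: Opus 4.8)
The plan is to extract both congruences by comparing the two available descriptions of $\bar{\rho}_{E,p}|_{\Gal(\bar{L}/L)}$: on the one hand the explicit shape $\bar{\rho}_{E,p}|_{\Gal(\bar{L}/L)}\sim\begin{pmatrix} b_{\lambda}\chi_p^k & *\\ 0 & b_{\lambda}^{-1}\chi_p^{1-k}\end{pmatrix}$ recorded in the proof of Lemma~\ref{splitL}, and on the other hand the fact established there that this restriction is unramified, hence factors through $\Gal(L^{\mathrm{unr}}/L)$ (topologically generated by $\Frob_{\lambda}$) and is precisely the mod $p$ reduction of the Galois representation of the good-reduction curve $\tilde{E}/\F_q$ on its $p$-adic Tate module.

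For $(1)$, I would first record that, since $\lambda\nmid p$ and $L/K_{\lambda}$ is totally ramified with residue field of size $q$, the arithmetic Frobenius acts on $\mu_p$ by $\zeta\mapsto\zeta^{q}$, so $\chi_p(\Frob_{\lambda})=q$ in $\F_p^{\times}$. Evaluating the upper-triangular shape at $\Frob_{\lambda}$ then shows that the trace of $\bar{\rho}_{E,p}(\Frob_{\lambda})$ equals $b_{\lambda}(\Frob_{\lambda})q^{k}+b_{\lambda}(\Frob_{\lambda})^{-1}q^{1-k}$ in $\F_p$. On the other hand, by Lemma~\ref{splitL} this same trace is the reduction modulo $p$ of the trace of $\Frob_{\lambda}$ acting on $T_p(\tilde{E})$, which is the integer $q+1-\#\tilde{E}(\F_q)=\Tr(\Frob_{\lambda})$ (of absolute value at most $2\sqrt{q}$ by the Hasse bound). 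Comparing the two expressions gives $(1)$.

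For $(2)$, the only substantive point is the vanishing $b_{\lambda}(\Frob_{\lambda})^{12c}=1$ in $\F_p^{\times}$. Granting it, raising the matrix identity of the previous paragraph to the $12c$-th power makes $\bar{\rho}_{E,p}(\Frob_{\lambda}^{12c})$ upper triangular with diagonal entries $q^{12ck}$ and $q^{12c(1-k)}$, so its trace is $q^{12ck}+q^{12c(1-k)}$; and, exactly as in $(1)$, this trace is the reduction of the integer $\Tr(\Frob_{\lambda}^{12c})=q^{12c}+1-\#\tilde{E}(\F_{q^{12c}})$, yielding $(2)$. To prove the vanishing, I would recall from the discussion following Lemma~\ref{unrever} that the ramified part $\gamma_{\lambda}$ of $\alpha_{\lambda}$ has order dividing $2t$, and compute from $m=(p-1)/2$ and $n=\num((p-1)/12)=(p-1)/\gcd(p-1,12)$ that $t=m/n=\gcd(p-1,12)/2$, so $2t=\gcd(p-1,12)\mid 12$. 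Hence $\gamma_{\lambda}^{12}=1$, and therefore $\alpha^{12}|_{G_{\lambda}}=\gamma_{\lambda}^{12}b_{\lambda}^{12}=b_{\lambda}^{12}$. By Lemma~\ref{unrever} the character $\alpha^{2t}$, hence also its power $\alpha^{12}$, is unramified at every finite prime of $K$, so $\alpha^{12}$ factors through the Galois group of the narrow Hilbert class field of $K$, a group of order $c$; thus $\alpha^{12c}$ is the trivial character of $G_K$, and restricting to $G_{\lambda}$ and evaluating at $\Frob_{\lambda}$ gives $b_{\lambda}(\Frob_{\lambda})^{12c}=1$.

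I expect the main obstacle to be bookkeeping with conventions rather than any genuine difficulty: one must fix $\Frob_{\lambda}$ to be the arithmetic Frobenius of $\Gal(L^{\mathrm{unr}}/L)$ so that simultaneously $\chi_p(\Frob_{\lambda})=q$ and $\Tr(\Frob_{\lambda})=q+1-\#\tilde{E}(\F_q)$ is the expected integer, consistently in $(1)$ and $(2)$; and one must verify the numerical identity $2t\mid 12$, which is exactly what licenses replacing $\alpha^{2t}$ by $\alpha^{12}$ and hence bringing the narrow class number $c$ into the exponent in place of the \emph{a priori} exponent $2t$.
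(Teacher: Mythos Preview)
Your proposal is correct and follows exactly the same approach as the paper's (very terse) proof: take the trace of $\bar{\rho}_{E,p}(\Frob_{\lambda})$ for $(1)$, and for $(2)$ observe that $2t\mid 12$ so $\alpha^{12}$ is everywhere unramified, whence $\alpha^{12c}=1$ by class field theory, and then take the trace of $\bar{\rho}_{E,p}(\Frob_{\lambda}^{12c})$. You have simply made explicit the numerical identity $2t=\gcd(p-1,12)$ and the passage through the narrow Hilbert class field that the paper leaves to the reader.
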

\begin{proof}
Congruence $(1)$ follows from simply taking the trace of $\bar{\rho}_{E,p}(\Frob_{\lambda})$. Congruence $(2)$ follows from taking the trace of  $\bar{\rho}_{E,p}(\Frob_{\lambda}^{12c})$ and recalling (see Lemma~\ref{unrever}) that $\alpha^{12}$ is unramified everywhere (as $2t\mid 12$) and so, from class field theory, we have $\alpha^{12c}=1$.
\end{proof}
\begin{proof}[Proof of Theorem~\ref{noborel} for $p\equiv 1\pmod{4}$]
Let $\lambda$ be a prime of $K$ dividing $2$ and let $f$ denote the residual degree of $\lambda$. From Lemma~\ref{congruences}, we get
\begin{equation*}
q^{12ck}+q^{12c(1-k)}\equiv\Tr(\Frob_{\lambda}^{12c})\pmod{p},
\end{equation*}
where $q=2^f$. Using the notation and results of Lemma~\ref{kprops}, we have $e\mid 12$ and $e\leq 6$. Therefore, we can write $12=re$ for some integer $2\leq r\leq 12$. So,
 \begin{equation}\label{congruence}q^{rca}+q^{rcb}\equiv \Tr(\Frob_{\lambda}^{12c})\pmod{p},\end{equation} where $a,b$ are as in Lemma~\ref{kprops}. Now, by the Hasse--Weil bounds, \begin{equation*}|\Tr(\Frob_{\lambda}^{12c})|\leq 2\cdot q^{6c}.\end{equation*} 

We are now going to show that if $p\equiv1\pmod{4}$, then $2\cdot q^{6c}< q^{rca}+q^{rcb}$. Suppose, for the sake of contradiction, that we have $2\cdot q^6\geq q^{rca}+q^{rcb}$. If $ra>6$ (and so $rb=12-ra<6$), then it is easy to see that $2\cdot q^{6c}<q^{rca}+q^{rcb}$. By symmetry, we cannot have $ra<6$ either, nor $rb<6$, nor $rb>6$. Therefore, $ra=rb=6$, yielding one of the following cases:
\newline

(1) $r=2$, $e=6$ and $a=b=3$;

(2) $r=3$, $e=4$ and $a=b=2$; or

(3) $r=6$, $e=2$ and $a=b=1$.
\newline

Case (1) yields $6k\equiv 3\pmod{p-1}$, which is not possible, as $p$ is odd. For similar reasons, we cannot have case (3): here we would be forced to have $2k\equiv 1\pmod{p-1}$. We are only left with case (2). In this case, we obtain the congruence $4k\equiv 2\pmod{p-1}$. If $p\equiv 1\pmod{4}$, this is not possible. Thus, in this case, we must have
\begin{equation*}
2\cdot q^{6c}<q^{rca}+q^{rcb}.
\end{equation*}
  From this and from the congruence~(\ref{congruence}), we obtain a bound \begin{equation*}p\leq q^{rca}+q^{rcb}+2\cdot q^{6c}\leq 2\cdot q^{12c}+2\cdot q^{6c}=2^{6fc+1}(2^{6fc}+1),\end{equation*}as we wanted.\end{proof}

Let us now turn to the case where $p\equiv 3\pmod{4}$. As we have seen in the proof above, if we are not in any of the cases $(1)$, $(2)$ or $(3)$, then we obtain the bound $2^{6fc+1}(2^{6fc}+1)$. We therefore assume we are in one of these cases. Again, $(1)$ and $(3)$ cannot occur, so let us assume we are in case $(2)$. In other words, we are going to assume, from now on, that $r=3$, $e=4$ and $a=b=2$. As observed above, this yields $2k\equiv 1\pmod{m}$  (where, recall, $m$ was defined to be $(p-1)/2$), and, moreover, $t=1$ or $t=3$. Analogously to what is done in~\cite{maz_rat}, the aim of what follows is to show that every prime  $5\leq\ell<p/4$ unramified in $K$ and such that $\ell\nmid d$ satisfies
\begin{equation*}\left(\frac{\ell}{p}\right)=-1.\end{equation*}

After this has been proven, an application of Minkowski's bound for the norm of ideals in a class of the ideal class group will yield the theorem (cf. section~7 of~\cite{maz_rat}).

Before proceeding, let us make a remark that will be useful later on. Note that, as a consequence of $E$ not having complex multiplication, we have $\mu_{\sigma}\circ {}^{\sigma}\mu_{\sigma}=d$ or $-d$, where $\sigma\in G_{\Q}$ restricts to the non-trivial automorphism of $K$. 
\begin{lem}\label{quadtwist}
Let $K'$ be a quadratic extension of $K$. Let $E'$ be a $K'$-twist of $E$, and let $g:E_{/K'}\rightarrow E'_{/K'}$ be a $K'$-isomorphism. Then $\mu_{\sigma}':= g\circ \mu_{\sigma}\circ {}^{\sigma} g^{-1}$ is a $K$-isogeny from ${}^{\sigma}E'$ to $E'$ for every $\sigma\in G_{\Q}$. In particular, $E'$ is a $\Q$-curve completely defined over $K$ and of degree $d$. Moreover, $\mu'_{\sigma}\circ {}^{\sigma}\mu'_{\sigma}=\mu_{\sigma}\circ {}^{\sigma}\mu_{\sigma}$.
\end{lem}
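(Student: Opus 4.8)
The statement is essentially a bookkeeping exercise with twists and Galois conjugation, so the plan is to verify each assertion by direct computation, being careful about where maps are defined. First I would set up notation: let $\sigma\in G_{\Q}$ restrict to the non-trivial automorphism $\tau$ of $K$ (the case where $\sigma$ restricts to the identity is trivial, since then $\mu_\sigma=\id$ and $\mu'_\sigma = g\circ\id\circ g^{-1}=\id$ too). Recall that $g\colon E_{/K'}\to E'_{/K'}$ is a $K'$-isomorphism, so ${}^\sigma g\colon {}^\sigma E_{/K'}\to {}^\sigma E'_{/K'}$ is again an isomorphism defined over $K'$ (conjugation by $\sigma$ sends $K'$-morphisms to $K'$-morphisms since $K'/\Q$ is Galois). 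Thus ${}^\sigma g^{-1}\colon {}^\sigma E'\to {}^\sigma E$, then $\mu_\sigma\colon {}^\sigma E\to E$, then $g\colon E\to E'$, so the composite $\mu'_\sigma := g\circ\mu_\sigma\circ {}^\sigma g^{-1}$ is a well-defined isogeny ${}^\sigma E'\to E'$, a priori only over $K'$, and of degree $\deg g\cdot\deg\mu_\sigma\cdot\deg {}^\sigma g^{-1} = \deg\mu_\sigma = d$ (isomorphisms have degree $1$).

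The key point — and the only thing requiring a genuine (if short) argument — is that $\mu'_\sigma$ descends to $K$, i.e.\ is fixed under the action of $G_K$. The plan is to compute, for $h\in G_K$, the conjugate ${}^h\mu'_\sigma = {}^h g\circ {}^h\mu_\sigma\circ {}^{h}({}^\sigma g^{-1})$ and compare with $\mu'_\sigma$. Since $\mu_\sigma$ is already a $K$-isogeny (this is the hypothesis that $E$ is completely defined over $K$), ${}^h\mu_\sigma = \mu_\sigma$. For the outer factor: $g$ is only a $K'$-isomorphism, and $E'$ is the $K'$-twist of $E$ cut out by the cocycle $h\mapsto g^{-1}\circ {}^h g\in\Aut(E)$; but the standard fact about twists is that $g^{-1}\circ {}^h g$ is exactly the $1$-cocycle defining $E'$, so ${}^h g = g\circ\xi_h$ where $\xi_h = g^{-1}{}^h g\in\Aut_{K'}(E) = \{\pm1\}$. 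Then ${}^h({}^\sigma g^{-1}) = {}^{h\sigma}(g^{-1})$; writing $h\sigma = \sigma\cdot(\sigma^{-1}h\sigma)$ and noting $\sigma^{-1}h\sigma\in G_K$ (as $h\in G_K$ and $K/\Q$ is Galois), we get ${}^{h\sigma}(g^{-1}) = {}^\sigma\bigl({}^{\sigma^{-1}h\sigma}(g^{-1})\bigr) = {}^\sigma\bigl((g\circ\xi_{\sigma^{-1}h\sigma})^{-1}\bigr) = {}^\sigma(\xi_{\sigma^{-1}h\sigma}^{-1}\circ g^{-1}) = {}^\sigma(\xi_{\sigma^{-1}h\sigma})^{-1}\circ {}^\sigma g^{-1}$. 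Assembling,
\begin{equation*}
{}^h\mu'_\sigma = g\circ\xi_h\circ\mu_\sigma\circ {}^\sigma(\xi_{\sigma^{-1}h\sigma})^{-1}\circ {}^\sigma g^{-1}.
\end{equation*}
Since $\xi_h,\,{}^\sigma(\xi_{\sigma^{-1}h\sigma})\in\{\pm1\}$ are central (scalars commute with every isogeny), this equals $\xi_h\cdot{}^\sigma(\xi_{\sigma^{-1}h\sigma})^{-1}\cdot\mu'_\sigma$. So it remains to check the sign $\xi_h = {}^\sigma(\xi_{\sigma^{-1}h\sigma})$ for all $h\in G_K$. This is where I would invoke that $K'/K$ is \emph{quadratic}: the cocycle $\xi$ factors through $\Gal(K'/K)\cong\Z/2$, hence $\xi_h$ depends only on $h\bmod G_{K'}$, takes values in $\{\pm1\}$, and the conjugation $h\mapsto\sigma^{-1}h\sigma$ together with the $\sigma$-action on the (central, $\pm1$-valued) target are both trivial on the relevant two-element group — indeed ${}^\sigma(\pm1)=\pm1$ and $\sigma$ normalises $G_{K'}$ because $K'/\Q$ is Galois, so $h\mapsto\sigma^{-1}h\sigma$ permutes $\Gal(K'/K)$, which has only the identity as a nontrivial-order-preserving... more simply, $\xi\colon\Gal(K'/K)\to\{\pm1\}$ is a homomorphism and the conjugation action is trivial since $\Gal(K'/K)$ is abelian, giving $\xi_{\sigma^{-1}h\sigma}=\xi_h$, and ${}^\sigma$ fixes $\pm1$. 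Hence ${}^h\mu'_\sigma = \mu'_\sigma$ for all $h\in G_K$, so $\mu'_\sigma$ is defined over $K$.

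Granting this, the remaining conclusions are immediate. Each $\mu'_\sigma$ is a $K$-isogeny ${}^\sigma E'\to E'$ of degree $d$, and for $\sigma$ restricting to the identity we arranged $\mu'_\sigma=\id$; checking the compatibility $\mu'_\sigma=\mu'_{\sigma'}$ when $\sigma,\sigma'$ restrict to the same automorphism of $K$ follows because $\mu_\sigma=\mu_{\sigma'}$ and ${}^\sigma g = {}^{\sigma'} g$ on $E_{/K'}$ (as $\sigma,\sigma'$ agree on $K'$? — here one must note we only need agreement on $K'$, which holds after possibly passing to the hypothesis that $E'$ is genuinely defined so that $g$'s conjugate only depends on the restriction to $K'$; if $K'\supsetneq K$ this needs the mild observation that the construction still yields a well-defined $\Q$-curve structure, which is standard). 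Therefore $E'$ is a $\Q$-curve completely defined over $K$; its degree is the minimal degree of such an isogeny, which is $\leq d$, and equality holds since $\mu'_\sigma\circ {}^\sigma\mu'_\sigma = g\circ\mu_\sigma\circ {}^\sigma g^{-1}\circ {}^\sigma g\circ {}^\sigma\mu_\sigma\circ g^{-1} = g\circ(\mu_\sigma\circ {}^\sigma\mu_\sigma)\circ g^{-1} = \mu_\sigma\circ{}^\sigma\mu_\sigma = \pm d$ (using that ${}^\sigma g^{-1}\circ{}^\sigma g=\id$ and that $\mu_\sigma\circ{}^\sigma\mu_\sigma=\pm d$ is a scalar, which $g$ conjugates to itself) — so no smaller-degree isogeny can exist. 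This last identity also proves the final assertion $\mu'_\sigma\circ {}^\sigma\mu'_\sigma=\mu_\sigma\circ {}^\sigma\mu_\sigma$. The main obstacle is purely the sign chase in the descent step; everything else is formal.
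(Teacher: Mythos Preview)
Your approach matches the paper's: both identify the twist cocycle $h\mapsto g^{-1}\,{}^h g\in\Aut(E)=\{\pm1\}$ (using that $E$ has no CM), then verify that the two signs appearing in ${}^h\mu'_\sigma$ cancel, and compute $\mu'_\sigma\circ{}^\sigma\mu'_\sigma$ by the same direct telescoping. The paper is terser---it checks only the nontrivial $\tau\in\Gal(K'/K)$ and asserts ${}^\tau({}^\sigma g)=-{}^\sigma g$ ``similarly''---while you chase signs for general $h\in G_K$ and correctly isolate the needed identity $\xi_h=\xi_{\sigma^{-1}h\sigma}$; your first justification (that $\sigma$ normalises $G_{K'}$ when $K'/\Q$ is Galois) is exactly what the paper's ``similarly'' is hiding, though your fallback ``since $\Gal(K'/K)$ is abelian'' does not stand on its own, as conjugation by $\sigma\notin G_K$ is not an inner automorphism of that group.
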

\begin{proof}
All of these statements are easy to prove. If $E'$ is a trivial twist (i.e., if it is $K$-isomorphic to $E$), then the result is trivial. Suppose then that this is not the case. Consider the map $\tau\mapsto g^{-1}({}^{\tau}g)$, $\tau\in \Gal(K'/K)$. This is a $1$-cocycle $\Gal(K'/K)\rightarrow \Aut_{K'}(E_{K'})$. As $E$ does not have complex multiplication, $\Aut_{K'}(E_{K'})=\{\pm 1\}$, and so \begin{equation*}H^1(\Gal(K'/K),\Aut_{K'}(E_{K'}))=\Hom(\Gal(K'/K),\{\pm 1\}).\end{equation*} Thus, $\tau\mapsto g^{-1}({}^{\tau}g)$ is a quadratic character $\Gal(K'/K)\rightarrow \{\pm 1\}$. As we are assuming that $E'$ is not a trivial twist, we conclude that ${}^{\tau}g=-g$ if $\tau\in \Gal(K'/K)$ is the non-trivial element. Similarly, ${}^{\tau}({}^{\sigma}g)=-{}^{\sigma}g$ for every $\sigma\in G_{\Q}$. Therefore,
\begin{equation*}
{}^{\tau}\mu'_{\sigma}={}^{\tau}g\circ{}^{\tau}\mu_{\sigma}\circ {}^{\tau}({}^{\sigma}g^{-1})=\mu'_{\sigma},
\end{equation*}
meaning that $\mu'_{\sigma}$ is defined over $K$. It is clear that $\mu'_{\sigma}$ has degree $d$.

Finally,
\begin{equation*}
\mu_{\sigma}'\circ{}^{\sigma}\mu_{\sigma}'=g\circ\mu_{\sigma}\circ{}^{\sigma}g^{-1}\circ {}^{\sigma} g\circ {}^{\sigma}\mu_{\sigma}\circ g^{-1}=\mu_{\sigma}\circ {}^{\sigma}\mu_{\sigma},
\end{equation*}
as we wanted.
\end{proof}

As a consequence of this lemma, we may assume, after taking an appropriate quadratic twist if needed, that $E$ satisfies one of the following statements:
\newline

(A) $\mu_{\sigma}\circ {}^{\sigma}\mu_{\sigma}=d$ and $b_{\lambda}(\Frob_{\lambda})\neq -1$ for every prime $\lambda$ of $K$ of residual degree $2$ and of odd characteristic $<p/4$; 
\newline

(B) $\mu_{\sigma}\circ {}^{\sigma}\mu_{\sigma}=-d$ and $b_{\lambda}(\Frob_{\lambda})\neq 1$ for every prime $\lambda$ of $K$ of residual degree $2$ and of odd characteristic $<p/4$.
\newline

In order to treat the case where $p\equiv 3\pmod{4}$, we will resort to some general theory that can be consulted in~\cite{ribet_qcurv}.

Let $A/\Q$ be the abelian surface defined by $A:=\Res_{K/\Q}(E)$. This is a $\Q$-simple abelian variety of $\GL_2$-type. Let $F:=\Q\otimes\End_{\Q}(A)$. Then $F$ is either $\Q(\sqrt{d})$ or $\Q(\sqrt{-d})$, depending on whether $\mu_{\sigma}\circ {}^{\sigma}\mu_{\sigma}=d$ or $-d$, respectively (see section~7 of~\cite{ribet_qcurv}). Let $\frak{q}$ be a prime of $F$ over $p$. 
 If we denote by $\rho_{E,p}$ the Galois representation of $E$ obtained by the Galois action on the Tate module $V_p(E):=T_p(E)\otimes\Q_p$, then we have
\begin{equation*}
\rho_{A,\frak{q}}|_{G_K}\cong \rho_{E,p},
\end{equation*}
where $\rho_{A,\frak{q}}$ stands for the Galois representation obtained from the Galois action on $V_{\frak{q}}(A):=V_p(A)\otimes_{F\otimes \Q_p} F_{\frak{q}}$ (recall that $V_p(A)$ is  free of rank $2$ over $F\otimes \Q_p$). 
The reduction of $\rho_{A,\frak{q}}$ modulo $\frak{q}$ is well-defined up to semi-simplification, so we are going to denote by \begin{equation*}\bar{\rho}_{A,\frak{q}}:G_{\Q}\rightarrow\GL_2(\F_{\frak{q}})\end{equation*} this semi-simplified reduction. If we write $\bar{\rho}_{E,p}^{\rm{ss}}$ for the semi-simplification of $\bar{\rho}_{E,p}$, then $\bar{\rho}_{A,\frak{q}}|_{G_K}$ is isomorphic to $\bar{\rho}_{E,p}^{\rm{ss}}$. It can be easily verified that the condition that the image of $\proj\bar{\rho}_{E,p}$ is contained in a Borel subgroup of $\PGL_2(\F_p)$ implies that the image of $\bar{\rho}_{A,\frak{q}}$ is contained in a Borel subgroup of $\GL_2(\F_{\frak{q}})$, and so is contained in a split Cartan, as $\bar{\rho}_{A,\frak{q}}$ is semi-simple (see section 6 of~\cite{ribet_qcurv} and, in particular,~{\cite[Lemma~6.4]{ribet_qcurv}}).

Let us just mention a standard lemma that will be useful later. This is just a special case of~{\cite[Chapitre~III, 9.4, Proposition~6]{bourb}}, but it suffices for our purposes.
\begin{lem}\label{normendo}
Define $R_p:=F\otimes \Q_p$. Let $f\in\End_{R_p}(V_p(A))$. Let $P_f(T)\in R_p[T]$ be the characteristic polynomial of $f$. Regarding $f$ as an element of $\End_{\Q_p}(V_p(A))$, let $Q_f(T)\in\Q_p[T]$ be the characteristic polynomial of $f$. Let $\tau\in\Gal(F/\Q)$ be the non-trivial element. Then $\tau$ defines an automorphism of $R_p[T]$. Let $N_{F/\Q}:R_p[T]\rightarrow \Q_p[T]$ denote the map obtained by $h(T)\mapsto h(T){}^{\tau}h(T)$. Then \begin{equation*}
N_{F/\Q}(P_f(T))=Q_f(T).
\end{equation*}
\end{lem}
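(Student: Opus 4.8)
The plan is to verify the identity after extending scalars to an algebraic closure $\bar{\Q}_p$ of $\Q_p$, where the \'etale $\Q_p$-algebra $R_p$ becomes a product of two copies of $\bar{\Q}_p$ and the whole computation becomes transparent; this is, in effect, the elementary special case of the Bourbaki statement cited, specialised to a degree-$2$ extension.

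First I would record that the statement is well-posed. The automorphism $\tau\in\Gal(F/\Q)$ acts on $R_p=F\otimes_{\Q}\Q_p$ through $\tau\otimes\id$, fixing $\Q_p$, and extends coefficientwise to $R_p[T]$ fixing $T$; since $\Q_p$ is flat over $\Q$ one has $R_p^{\tau\otimes\id}=F^{\tau}\otimes_{\Q}\Q_p=\Q_p$, so for any $h(T)\in R_p[T]$ the product $h(T)\cdot{}^{\tau}h(T)$ is $\tau$-invariant and hence lies in $\Q_p[T]$ --- which is exactly what makes $N_{F/\Q}$ have target $\Q_p[T]$ as asserted. Using that $V_p(A)$ is free of rank $2$ over $R_p$, I would then fix an $R_p$-basis and let $M\in\Mat_2(R_p)$ be the matrix of $f$, so that $P_f(T)=\det(T\cdot\Id_2-M)$, while $Q_f(T)$ is the characteristic polynomial of the same $f$ acting $\Q_p$-linearly on the underlying $4$-dimensional $\Q_p$-vector space $V_p(A)$.

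The heart of the argument is the following base change. Since $F/\Q$ is separable, $R_p$ is \'etale over $\Q_p$, so $R_p\otimes_{\Q_p}\bar{\Q}_p\cong\bar{\Q}_p\times\bar{\Q}_p$, the two projections being the two $\Q_p$-algebra embeddings $\sigma_1,\sigma_2\colon R_p\hookrightarrow\bar{\Q}_p$, which satisfy $\sigma_2=\sigma_1\circ\tau$. Tensoring $V_p(A)$ up along $\Q_p\to\bar{\Q}_p$ and using this splitting, $V_p(A)\otimes_{\Q_p}\bar{\Q}_p$ decomposes as $W_1\oplus W_2$ with $W_i=V_p(A)\otimes_{R_p,\sigma_i}\bar{\Q}_p$ a $2$-dimensional $\bar{\Q}_p$-space, and the $\bar{\Q}_p$-linear extension of $f$ respects this decomposition, acting on $W_i$ by the matrix $\sigma_i(M)$ obtained by applying $\sigma_i$ to the entries of $M$. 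Because characteristic polynomials are unchanged under field extension,
\begin{equation*}
Q_f(T)=\det(T\cdot\Id_2-\sigma_1(M))\cdot\det(T\cdot\Id_2-\sigma_2(M))=\sigma_1(P_f(T))\cdot\sigma_2(P_f(T))=\sigma_1\bigl(P_f(T)\cdot{}^{\tau}P_f(T)\bigr).
\end{equation*}
Since $P_f(T)\cdot{}^{\tau}P_f(T)=N_{F/\Q}(P_f(T))$ already lies in $\Q_p[T]$, on which $\sigma_1$ restricts to the identity, this gives $Q_f(T)=N_{F/\Q}(P_f(T))$ in $\Q_p[T]$, as wanted.

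There is no real obstacle here; the only points demanding attention are the bookkeeping of which ring each polynomial belongs to and the verification that $f\otimes\bar{\Q}_p$ acts on the $\sigma_i$-component by $\sigma_i(M)$ --- both immediate once the decomposition $R_p\otimes_{\Q_p}\bar{\Q}_p\cong\bar{\Q}_p\times\bar{\Q}_p$ is in place. One could instead avoid extending scalars by splitting into the cases where $p$ is split or inert in $F$ and writing out a block matrix over $\Q_p$ directly, but the base-change approach treats both (and the, here irrelevant, ramified case) uniformly.
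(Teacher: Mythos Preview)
Your argument is correct. The paper does not give a proof of this lemma at all; it simply records the statement as a special case of~\cite[Chapitre~III, 9.4, Proposition~6]{bourb}, and your base-change argument to $\bar{\Q}_p$ is precisely the standard way one proves that Bourbaki result in the degree-$2$ situation at hand.
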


As $p$ is unramified in $K$, and as $\bar{\rho}_{A,\frak{q}}|_{G_K}$ is isomorphic to $\bar{\rho}_{E,p}^{\rm{ss}}$, we find that
\begin{equation*}
\bar{\rho}_{A,\frak{q}}\sim\begin{pmatrix} \beta\chi_p^k & 0\\0 & \theta\beta^{-1}\chi_p^{1-k}\end{pmatrix}
\end{equation*}
for some character $\beta:G_{\Q}\rightarrow\F_{\frak{q}}^{\times}$ unramified at $p$ such that $\beta|_{G_K}=\alpha$, and where $\theta:G_{\Q}\rightarrow\GL_2(\F_{\frak{q}})$ is a quadratic character defined as follows: if $\sigma\in G_{\Q}$ restricts to the non-trivial automorphism of $K$, then \begin{equation*} \theta(\sigma)=\frac{\mu_{\sigma}\circ {}^{\sigma}\mu_{\sigma}}{d};\end{equation*} otherwise, the image is $1$ (see section 7 of~\cite{ribet_qcurv}). In particular, if $\mu_{\sigma}\circ {}^{\sigma}\mu_{\sigma}=d$, then $F$ is real and $\theta=1$. 

\begin{notat}
In order to simplify exposition, from here on, given a rational prime $\ell$, we are going to assume we have fixed an embedding $\bar{\Q}\hookrightarrow \bar{\Q}_{\ell}$. This amounts to choosing a decomposition subgroup $G_{\ell}$ of $G_{\Q}$ over $\ell$. Moreover, every number field $L$ will be regarded as subfields of $\bar{\Q}$, so that, given a prime $\lambda$ of $L$ dividing $\ell$, we have an embedding of the decomposition subgroup $G_{\lambda}$ of $L$ over $\lambda$ into $G_{\ell}$. Similarly, algebraic extensions of $\Q_{\ell}$ will be regarded as subfields of $\bar{\Q}_{\ell}$.
\end{notat}

In what follows, $\lambda$ will be a prime of $K$ and $\ell$ will be the rational prime lying below $\lambda$. We will further assume that $5\leq \ell <p/4$, $\ell\nmid d$ and that $\ell$ does not ramify in $K$. Moreover, we will assume that $p$ is large enough so that it does not ramifiy in $F$. Write $\beta_{\ell}$ for the restriction of $\beta$ to $G_{\ell}$. As we did before, we can resort to class field theory to (non-uniquely) decompose $G_{\ell}^{\rm{ab}}$ as
\begin{equation*}
G_{\ell}^{\rm{ab}}\cong \Z_{\ell}^{\times}\times \hat{\Z},
\end{equation*}
and we obtain a decomposition $\beta_{\ell}=\eta_{\ell}\cdot \delta_{\ell}$, where $\eta_{\ell}$ factors through $\Z_{\ell}^{\times}$ and $\delta_{\ell}$ is unramified. Let $L'$ be the splitting field of $\eta_{\ell}$. It is a totally ramified extension of $\Q_{\ell}$. Moreover, if we keep writing $L$ for the splitting field of $\gamma_{\lambda}$ over $K_{\lambda}$ in the decomposition of $G_{\lambda}^{\rm{ab}}$, it can be easily checked that $L$ is an unramified extension of $L'$ of degree equal to that of $K_{\lambda}/\Q_{\ell}$. Therefore, the degree of $L'/\Q_{\ell}$ is the same as the degree of $L/K_{\lambda}$. In particular, it divides~$2t$. 
\begin{lem}\label{betadie}
Using the above notation, $\beta^{4t}=1$. Moreover, if $\ell$ splits in $K$, then $\beta_{\ell}^{2t}=1$.
\end{lem}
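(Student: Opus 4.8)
\textbf{Proof proposal for Lemma~\ref{betadie}.}
The plan is to exploit the fact that $\beta^{2t}$ (equivalently $\alpha^{2t}$) is unramified away from $p$, together with unramifiedness at $p$ itself, to control $\beta$ globally, and then run a class-field-theoretic argument analogous to the passage from Lemma~\ref{unrever} to Lemma~\ref{congruences}. First, I would record that $\beta$ is unramified at $p$ by construction (it satisfies $\beta|_{G_K}=\alpha$ and $\alpha$ was shown unramified at every prime above $p$), and that at a prime $\lambda$ of $K$ of residue characteristic $\ell\neq p$ the local analysis just carried out shows that the splitting field $L'$ of the ramified part $\eta_\ell$ of $\beta_\ell$ is a totally ramified extension of $\Q_\ell$ of degree dividing $2t$. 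Hence $\beta^{2t}$ is unramified at every prime of $\Q$ (the prime $p$ included, and all $\ell\neq p$), so $\beta^{2t}$ is an everywhere-unramified character of $G_\Q$. By class field theory, an everywhere-unramified character of $G_{\Q}$ is trivial (the narrow class number of $\Q$ is $1$), so already $\beta^{2t}=1$; in any case $\beta^{4t}=1$, which is the first assertion. (If one only knows the quadratic-character statement at $\ell\mid d$ or at the finitely many awkward primes one can instead observe $\beta^{4t}$ is unramified everywhere, hence trivial, which still gives the stated $\beta^{4t}=1$.)

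For the second assertion, suppose $\ell$ splits in $K$, say $\ell\mathcal{O}_K=\lambda\lambda'$ with both primes of residue degree $1$. Then $K_\lambda=\Q_\ell$, so in the notation above $L'=L$ and the degree of $L'/\Q_\ell$ equals the degree of $L/K_\lambda$, which divides $2t$; but more is true: since $K_\lambda=\Q_\ell$, the ramified part $\eta_\ell$ of $\beta_\ell$ coincides with $\gamma_\lambda$, and we showed $\gamma_\lambda$ has order dividing $2t$. The point is that when $\ell$ splits, $\beta_\ell=\beta_{\lambda}$ is literally the restriction of $\alpha$ to a decomposition group of $K$ at $\lambda$, so the order of its ramified part is already bounded by $2t$ without the extra factor of $2$ that was needed to pass from $K_\lambda$ (possibly of degree $2$) down to $\Q_\ell$. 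Combining this with the unramifiedness of $\beta^{2t}$ at all other primes (including $p$) shows $\beta^{2t}$ is unramified everywhere, hence trivial, i.e.\ $\beta^{2t}=1$ as claimed. Equivalently: the local order of $\beta$ at every finite place divides $2t$ when $\ell$ splits (and divides $2t$ at $p$ trivially, since $\beta$ is unramified there), so $\beta^{2t}$ is an unramified character of $G_\Q$, hence trivial.

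The main obstacle I anticipate is bookkeeping the precise local degrees: one must be careful that at a prime $\ell$ \emph{inert} in $K$ the relevant totally ramified extension lives over $\Q_\ell$ and has degree dividing $2t$ (giving only $\beta^{2t}$ unramified at $\ell$ after a further unramified twist, whence $\beta^{2t}$ unramified everywhere only up to the contributions at inert primes — here one really does need the full $\beta^{4t}=1$ in general), whereas at a \emph{split} prime the residue extension is trivial and the bound improves to $2t$ on the nose. One should also double-check the edge cases where $t=1$ versus $t=3$ (the only possibilities in the case under consideration), and confirm that $\beta$ being unramified at $p$ uses only that $p$ is unramified in $K$ and in $F$, as assumed. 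Granting these routine verifications, the lemma follows by the standard dictionary between everywhere-unramified abelian characters of $G_\Q$ and the (trivial) narrow class group of $\Q$.
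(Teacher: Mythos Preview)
Your argument for the first assertion is serviceable: once you concede (as you do in the parenthetical and the final paragraph) that at primes of $\Q$ ramified or inert in $K$ you only control $\beta^{4t}$, you recover the paper's conclusion. The paper's route is more direct: for any $\tau\in I_\ell$ one has $\tau^2\in I_\lambda$ because $[G_\Q:G_K]\le 2$, so $\beta(\tau)^{4t}=\alpha(\tau^2)^{2t}=1$ by Lemma~\ref{unrever}; hence $\beta^{4t}$ is unramified everywhere, and an everywhere-unramified character of $G_\Q$ is trivial. Your initial claim that $\beta^{2t}=1$ globally is not justified (it would fail at the prime ramified in $K$, where the index-two inertia step is unavoidable), but since you back off from it, the first part stands.

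The second assertion, however, has a genuine gap. You show only that the \emph{ramified part} $\eta_\ell$ of $\beta_\ell$ has order dividing $2t$ when $\ell$ splits; this says $\beta_\ell^{2t}$ is \emph{unramified} at $\ell$, not that it is \emph{trivial} on all of $G_\ell$. Your attempt to upgrade this to $\beta^{2t}=1$ globally fails for the reason you yourself flag: at inert (or ramified) primes you only know $\beta^{4t}$ is unramified, so $\beta^{2t}$ need not be an everywhere-unramified character of $G_\Q$. Thus you cannot deduce $\beta_\ell^{2t}=1$ this way. The paper's proof uses a different, essential ingredient that you never invoke: since $\ell$ splits in $K$, one has $G_\ell=G_\lambda\subset G_K$, so $\beta_\ell=\alpha|_{G_\ell}$ takes values in $\F_p^\times$ (not merely in $\F_{\frak q}^\times$). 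From $\beta^{4t}=1$ one gets $\beta_\ell(\tau)^{2t}\in\{\pm 1\}$; but $\beta_\ell(\tau)^{2t}=(\beta_\ell(\tau)^t)^2$ is a square in $\F_p^\times$, and since we are in the case $p\equiv 3\pmod 4$, $-1$ is not a square there. Hence $\beta_\ell^{2t}=1$. The hypothesis $p\equiv 3\pmod 4$ is doing real work here, and your argument omits it entirely.
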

\begin{proof}
Recall that $\beta|_{G_K}=\alpha$, and that  $\alpha^{2t}$ is unramified at every prime of $K$ (see Lemma~\ref{unrever}). We claim that $\beta^{4t}$ is unramified everywhere. 

Let $\ell$ be a rational prime and let $\lambda$ a prime of $K$ dividing $\ell$. Let $I_{\ell}$ and $I_{\lambda}$ denote the inertia subgroups of $G_{\ell}$ and $G_{\lambda}$, respectively. Note that $[G_{\Q}:G_K]\leq 2$. Therefore, if $\tau\in I_{\ell}$, we have $\tau^2\in I_{\lambda}$. Therefore, $\beta(\tau)^{4t}=\beta(\tau^2)^{2t}=\alpha(\tau^2)^{2t}=1$, because $\alpha^{2t}$ is unramified everywhere. This shows that $\beta^{4t}$ is unramified everywhere.

As $\beta^{4t}$ is a character defined on $G_{\Q}$, it follows that it is trivial, which proves the first part of the lemma.

If $\ell$ is a rational prime splitting in $K$, then $G_{\ell}=G_{\lambda}$, and so we have $\beta(\tau)\in\F_p^{\times}$ for every $\tau\in G_{\ell}$. As $\beta(\tau)^{4t}=1$, we must have $\beta(\tau)^{2t}=\pm 1$. Note that $-1$ is not a quadratic residue modulo $p$, as $p\equiv 3\pmod{4}$. Therefore, we are forced to have $\beta(\tau)^{2t}=1$.
\end{proof}
\begin{lem}
Using the above notation, $A$ acquires good reduction over $L'$.
\end{lem}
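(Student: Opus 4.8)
The plan is to mimic the argument of Lemma~\ref{splitL}, but now working over $\Q_{\ell}$ (not over $K_{\lambda}$) and with the abelian surface $A$ in place of $E$. Recall that $A=\Res_{K/\Q}(E)$ is an abelian variety of $\GL_2$-type with $V_p(A)$ free of rank $2$ over $R_p=F\otimes\Q_p$, and that after semisimplification
\begin{equation*}
\bar{\rho}_{A,\frak{q}}\sim\begin{pmatrix}\beta\chi_p^k & 0\\ 0 & \theta\beta^{-1}\chi_p^{1-k}\end{pmatrix}.
\end{equation*}
By Lemma~\ref{betadie}, $\beta^{4t}=1$, and the splitting field $L'$ of the ramified part $\eta_{\ell}$ of $\beta_{\ell}$ is a totally ramified extension of $\Q_{\ell}$ of degree dividing $2t$; restricted to $G_{L'}$, the character $\beta$ becomes unramified. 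The key point is that $A$, like $E$, has potentially good reduction at $\ell$ (it is a restriction of scalars of a curve with potentially good reduction at the primes above $\ell$, since $E$ has good reduction after the ramified extension $L$ of $K_{\lambda}$ built from $\gamma_{\lambda}$, and $L'$ is the ``descent to $\Q_{\ell}$'' of that extension — this is exactly what the paragraph before the lemma records: $L/L'$ is unramified).

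The argument then runs as follows. Suppose for contradiction that $A$ does not have good reduction over $L'$. Let $\F$ be the residue field of $L'$ (equal to that of $\Q_{\ell}$, as $L'/\Q_{\ell}$ is totally ramified) and let $\tilde A/\F$ be the special fibre of the N\'eron model of $A$ over $\loc_{L'}$. Over $G_{L'}$ we have $\bar{\rho}_{A,\frak{q}}|_{G_{L'}}\sim\begin{pmatrix}\delta_{\ell}\chi_p^k & 0\\ 0 & \theta\delta_{\ell}^{-1}\chi_p^{1-k}\end{pmatrix}$ with $\delta_{\ell}$ unramified. Since $\ell\nmid p$, the restriction $\chi_p|_{G_{L'}}$ is unramified, and $\theta$ is unramified at $\ell$ (it is ramified only at primes dividing $d$, and $\ell\nmid d$). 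Hence $\bar{\rho}_{A,\frak{q}}|_{G_{L'}}$ is unramified, so after a further \emph{unramified} extension $F'$ of $L'$ — the splitting field of the diagonal characters — the group $A(F')$ acquires a nonzero $\frak{q}$-torsion point, equivalently a nonzero $p$-torsion point. Because N\'eron models are stable under \'etale base change, the special fibre of the N\'eron model of $A$ over $\loc_{F'}$ is $\tilde A_{/\F_{F'}}$, so $\tilde A(\F_{F'})$ would have a nonzero $p$-torsion point. This is impossible when $A$ has bad reduction over $L'$ (the component group contributes no $p$-torsion for $p$ large — here one uses $p\geq 13$, or more precisely $p>4\ell$ and Hasse--Weil-type bounds on the relevant component groups of abelian surfaces with potentially good reduction, a bound that is uniform; the additive part of the reduction is unipotent and contributes no $p$-torsion either). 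This contradiction forces $A$ to have good reduction over $L'$.

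The step I expect to be the main obstacle is the last one: ruling out a nonzero $p$-torsion point in $\tilde A(\F_{F'})$ when $A$ has bad reduction over $L'$. For an elliptic curve (as in Lemma~\ref{splitL}) this is immediate — a bad reduction fibre over a finite field is $\mathbb{G}_a$, $\mathbb{G}_m$, or an extension of a finite group of prime-to-$p$ order (for $p$ large) by one of these, and none has $p$-torsion. For the abelian surface $A$ one must be slightly more careful: the connected component of $\tilde A$ is an extension of an abelian variety by a linear group, and, $A$ having \emph{potentially} good reduction, that linear part is in fact a torus times a unipotent group; combined with a uniform bound on the order of the component group of $A$ over $L'$ (which for $\GL_2$-type surfaces of conductor controlled by $d$ and the residue characteristic is elementary, and in any case is prime to $p$ once $p$ is large enough — in the range $p\geq 13$, $p>4\ell$ this can be checked directly), one concludes there is no $p$-torsion over any finite extension of $\F$. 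Everything else is a routine transcription of the elliptic-curve argument of Lemma~\ref{splitL}, using that $\chi_p$ and $\theta$ are unramified at $\ell$ and that $L/L'$ is unramified.
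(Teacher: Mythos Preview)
Your approach has a genuine gap in the final step, and the paper avoids it entirely by a more direct route.

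The paper does not mimic Lemma~\ref{splitL} for $A$. Instead it applies the N\'eron--Ogg--Shafarevich criterion to the full $p$-adic representation $\rho_{A,p}$: one must show that $\rho_{A,p}|_{G_{L'}}$ is unramified. Since $L/L'$ is unramified, it suffices to check this on $G_L$. But $A=\Res_{K/\Q}(E)$ and $K\subseteq L$, so $V_p(A)|_{G_L}\cong V_p(E)\oplus V_p({}^{\sigma}E)$ as $G_L$-modules, and both summands are unramified because $E$ (hence ${}^{\sigma}E$) has good reduction over $L$ by Lemma~\ref{splitL}. That is the entire proof: a two-line reduction to the elliptic-curve case already established.

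Your argument, by contrast, tries to reproduce the contradiction of Lemma~\ref{splitL} directly for the surface $A$: produce a $p$-torsion point over an unramified extension $F'$ and argue it cannot survive in the special fibre of a bad-reduction N\'eron model. For an elliptic curve with potentially good (hence, if not good, additive) reduction, the special fibre has identity component $\mathbb{G}_a$ and component group of order at most $4$, so there is no $p$-torsion once $p\geq 5$; that is why Lemma~\ref{splitL} works. For an abelian surface this breaks down: the identity component of $\tilde{A}$ is an extension of an abelian variety $B$ by a linear group, and $B$ can perfectly well be a genuine elliptic curve over the residue field carrying nontrivial $p$-torsion (for a concrete model, take a product of two elliptic curves over a local field, one with good reduction and one with additive potentially good reduction). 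Your parenthetical remarks about bounds on the component group and about the unipotent part contributing no $p$-torsion address the wrong pieces of the Chevalley decomposition; the abelian quotient $B$ is where the obstruction sits, and potentially good reduction does not exclude it. To close this gap you would have to control the reduction type of $A$ over $L'$ using its explicit structure as a Weil restriction of $E$ --- at which point you have arrived at the paper's argument.
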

\begin{proof}
Denoting the absolute Galois group of $L'$ by $G_{L'}$, what we have to show is that $\rho_{A,p}|_{G_{L'}}$ is unramified, where $\rho_{A,p}$ is the Galois representation of the Tate module $V_p(A)$ of~$A$. As $L$ is unramified over $L'$, it is enough to show that $\rho_{A,p}|_{G_{L}}$ is unramified. Since $A$ is the Weil restriction of $E$ from $K$ to $\Q$, and noting that $K\subseteq L$, we see that $\rho_{A,p}|_{G_{L}}$ is the direct sum of the Galois representations (restricted to $G_L$) associated to $V_p(E)$ and $V_p({}^{\sigma}E)$, where $\sigma\in \Gal(K/\Q)$ is non-trivial. As these two representations are isomorphic, it is therefore enough to show that $\rho_{E,p}|_{G_L}$ is unramified. But we already know from Lemma~\ref{splitL} that $E$ has good reduction over $L$, which yields that $\rho_{E,p}|_{G_L}$ is unramified, finishing the proof of the lemma.
\end{proof}
As a consequence, $\rho_{A,\frak{q}}|_{G_{L'}}$ factors through $\Gal((L')^{\rm{unr}}/L')$. Writing $\Frob_{\ell}$ for the Frobenius element of $\Gal((L')^{\rm{unr}}/L')$, we obtain a result analogous to Lemma~\ref{congruences}: 
\begin{equation}\label{finalcong}
\delta_{\ell}(\Frob_{\ell})\ell^k+\theta(\Frob_{\ell})\delta_{\ell}(\Frob_{\ell})^{-1}\ell^{1-k}\equiv a_{\ell}\pmod{\frak{q}},
\end{equation}
where $a_{\ell}\in \loc_F$ stands for the trace of $\rho_{A,\frak{q}}(\Frob_{\ell})$. If we denote by $P_{\ell}(T)$ the characteristic polynomial of $\rho_{A,\frak{q}}(\Frob_{\ell})$ (which has coefficients in $F$), then Lemma~\ref{normendo} asserts that $N_{F/\Q}(P_{\ell}(T))$ is precisely the characteristic polynomial of $\rho_{A,p}(\Frob_{\ell})$. As all the roots of the characteristic polynomial of $\rho_{A,p}(\Frob_{\ell})$ have complex size $\sqrt{\ell}$ (independently of the embedding into $\C$ chosen), we conclude that $|a_{\ell}|\leq 2\sqrt{\ell}$ for every embedding of $F$ into $\C$.

\begin{proof}[Proof of Theorem~\ref{noborel} for $p\equiv 3\pmod{4}$]
Suppose, for contradiction, that $\ell$ is a quadratic residue modulo $p$. Then $\ell^m\equiv 1\pmod{p}$. As $2k\equiv 1\pmod{m}$, we have $\ell^{k}\equiv\ell^{1-k}\pmod{p}$. We divide the proof in two parts: one to treat the cases where $\ell$ splits in $K$, and the other to treat the cases where $\ell$ remains prime.

Suppose that $\ell$ splits in $K$. Then $\theta(\Frob_{\ell})=1$. Equation~(\ref{finalcong}) yields
\begin{equation*}
\ell^{k}(\delta_{\ell}(\Frob_{\ell})+\delta_{\ell}(\Frob_{\ell})^{-1})\equiv a_{\ell}\pmod{\frak{q}}.
\end{equation*}
From Lemma~\ref{betadie}, and from the fact that $t=1$ or $t=3$, we conclude that either $\delta_{\ell}(\Frob_{\ell})$ is a $3$rd root of unity, or $-\delta_{\ell}(\Frob_{\ell})$ is. Thus, $\delta_{\ell}(\Frob_{\ell})+\delta_{\ell}(\Frob_{\ell})^{-1}=\pm 1$ or $\delta_{\ell}(\Frob_{\ell})+\delta_{\ell}(\Frob_{\ell})^{-1}=\pm 2$, and we find
\begin{equation*}
 \pm\ell^k\equiv a_{\ell}\pmod{\frak{q}}\quad\text{or}\quad  \pm 2\ell^k\equiv a_{\ell}\pmod{\frak{q}}.
\end{equation*}
Taking norms from $F$ to $\Q$, and recalling that $\ell^{2k}\equiv \ell\pmod{p}$, we get
\begin{equation*}
\ell\equiv N_{F/\Q}(a_{\ell})\pmod{p}\quad\text{or}\quad 4\ell\equiv N_{F/\Q}(a_{\ell})\pmod{p}.
\end{equation*}

As $|a_{\ell}|\leq 2\sqrt{\ell}$ for every embedding of $F$ in $\C$ and as $\ell <p/4$, we conclude that we must have $\ell=N_{F/\Q}(a_{\ell})$ or $4\ell =N_{F/\Q}(a_{\ell})$. In any case, $v_{\ell}(N_{F/\Q}(a_{\ell}))=1$, which is only possible if $\ell$ ramifies in $F$. However, $F=\Q(\sqrt{d})$ or $F=\Q(\sqrt{-d})$, and $\ell$ is an odd prime not dividing~$d$, so we obtain a contradiction. Therefore, if $3\leq \ell <p/4$, $\ell\nmid d$ and if $\ell$ splits in $K$, then $\ell$ is not a quadratic residue modulo $p$.

Suppose now that $\ell$ remains prime in $K$.  Assume that (A) holds. In this case, we have $\theta(\Frob_{\ell})=1$, because $\mu_{\sigma}\circ{}^{\sigma}\mu_{\sigma}=d$. We obtain the congruence
\begin{equation*}
\ell^k(\delta_{\ell}(\Frob_{\ell})+\delta_{\ell}(\Frob_{\ell})^{-1})\equiv a_{\ell}\pmod{\frak{q}}.
\end{equation*}
Also, from Lemma~\ref{betadie}, we know that $\delta_{\ell}^{4t}=1$, so either $\delta_{\ell}(\Frob_{\ell})^2$ is a $3$rd root of unity, or $-\delta_{\ell}(\Frob_{\ell})^2$ is. As, from (A), $b_{\lambda}(\Frob_{\lambda})\neq -1$, and as $b_{\lambda}(\Frob_{\lambda})=\delta_{\ell}(\Frob_{\ell})^2$, we see that $b_{\lambda}(\Frob_{\lambda})$ is either $1$, a primitive third root of unity or the negative of a primitive third root of unity. In the first case, we get $\delta_{\ell}(\Frob_{\ell})=\pm 1$, which leads to
\begin{equation*}
\pm2\ell^k\equiv a_{\ell}\pmod{\frak{q}}.
\end{equation*}
If, on the other hand, $b_{\lambda}(\Frob_{\lambda})$ is a primitive third root of unity, then, in particular, $\delta_{\ell}(\Frob_{\ell})^6=1$, which means that $\delta_{\ell}(\Frob_{\ell})^3$ is a square root of $1$. The situation where $\delta_{\ell}(\Frob_{\ell})=\pm 1$ takes us to the situation above, so we may assume that either $\delta_{\ell}(\Frob_{\ell})$ is a primitive third root of unity, or $-\delta_{\ell}(\Frob_{\ell})$ is. This leads to
\begin{equation*}
\pm\ell^k\equiv a_{\ell}\pmod{\frak{q}}.
\end{equation*} 
Finally, if $-b_{\lambda}(\Frob_{\lambda})$ is a primitive third root of unity, then $\delta_{\ell}(\Frob_{\ell})^6=-1$, which means that $\delta_{\ell}(\Frob_{\ell})\notin\F_p^{\times}$, as $p\equiv 3\pmod{4}$. In particular, $p$ remains prime in $F$. Moreover, as $\delta_{\ell}(\Frob_{\ell})^2\in\F_p^{\times}$, we see that the Galois conjugate of $\delta_{\ell}(\Frob_{\ell})$ is $-\delta_{\ell}(\Frob_{\ell})$. Thus, taking norms from $\F_{\frak{q}}$ to $\F_p$, we get
\begin{equation*}
-\ell^{2k}(\delta_{\ell}(\Frob_{\ell})^2+\delta_{\ell}(\Frob_{\ell})^{-2}+2)\equiv N_{F/\Q}(a_{\ell})\pmod{p},
\end{equation*}
and so
\begin{equation*}
-3\ell^{2k}\equiv N_{F/\Q}(a_{\ell})\pmod{p}.
\end{equation*}
Recalling that $\ell^{2k}\equiv \ell\pmod{p}$, and after taking norms from $F$ to $\Q$ in the appropriate cases, the three cases above give
\begin{equation*}
4\ell\equiv N_{F/\Q}(a_{\ell})\pmod{p}\quad\text{or}\quad \ell\equiv N_{F/\Q}(a_{\ell})\pmod{p}\quad\text{or}\quad -3\ell\equiv N_{F/\Q}(a_{\ell})\pmod{p}.
\end{equation*}
Using the same kind of arguments we used above, we conclude that $v_{\ell}(N_{F/\Q}(a_{\ell}))=1$, implying that $\ell$ ramifies in $F$ (recall that we are assuming that $\ell>3$), which it does not.

We omit the proof of the case where $\ell$ remains prime in $K$ and (B) holds, as it is treated in a similar manner to the case where (A) holds, except that now we have $\theta(\Frob_{\ell})=-1$.

We conclude that if $\ell$ is a prime satisfying $5\leq \ell <p/4$, $\ell\nmid d$ and if $\ell$ does not ramify in $K$, then $\ell$ is not a quadratic residue modulo $p$. In other words, $\ell$ remains prime in $\Q(\sqrt{-p})$. If $m_d$ is the number of prime divisors of $d$ and $m_K$ is the number of rational primes that ramify in $K$, then the number of primes $<p/4$ which do not remain prime in $\Q(\sqrt{-p})$ is $\leq m_d+m_K+2$. Therefore, there is an integer $M_{K,d}$ depending only of $K$ and $d$ such that the number of classes of the ideal class group of $\Q(\sqrt{-p})$ represented by an integral ideal of norm $<p/4$ is $\leq M_{K,d}$. However, a well-known result of Minkowski states that each class of the ideal class group is represented by an integral ideal of norm $<2\sqrt{p}/\pi$, which is a number smaller than $p/4$. This means that the class number of $\Q(\sqrt{-p})$ is bounded above by $M_{K,d}$. As there are only finitely many imaginary quadratic fields of a given class number, we conclude that $p$ can only be one of finitely many possibilities which only depend on $K$ and $d$. Theorem~\ref{noborel} follows.
\end{proof}

\subsection{The Borel case}

The aim of this section is to provide a proof of Proposition~\ref{borel_main} and Theorem~\ref{borelmain}. The arguments used to prove Proposition~\ref{borel_main} follow closely those of Ellenberg~\cite{ellen}.

Let $p$ and $q$ be as in the statement of Proposition~\ref{borel_main}. Define
\begin{equation*}
Z_{d,0}(q,p):=X_0(d)\times_{X(1)} X_{0,\rm{ns}}^+(q,p).
\end{equation*}
\begin{lem}\label{Kpt}
Let $w_d$ denote the involution of $Z_{d,0}(q,p)$ induced by the Atkin--Lehner involution of $X_0(d)$. Let $E$ be a $\Q$-curve as in the statement of Proposition~\ref{borel_main}. Then $E$ gives rise to a $K$-point $P$ in $Z_{d,0}(q,p)$ satisfying $w_dP={}^{\sigma}P$ for every $\sigma\in G_{\Q}$ restricting to the non-trivial element of $\Gal(K/\Q)$.
\end{lem}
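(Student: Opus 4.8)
The statement asserts two things: first, that a $\Q$-curve $E$ as in Proposition~\ref{borel_main} yields a $K$-rational point $P$ on $Z_{d,0}(q,p)$; second, that this point satisfies the twisting relation $w_dP = {}^\sigma P$. For the first part, I would simply unwind the moduli interpretation of the fibre product. A point of $X_0(d)$ over $K$ is the datum $(E, C_d)$ of $E$ together with the cyclic $K$-rational $d$-isogeny $\mu_\sigma$ implicit in the $\Q$-curve structure --- more precisely, the kernel $C_d := \ker(\hat\mu_\sigma)\subset E$ is a cyclic subgroup of order $d$ defined over $K$ (here I use that $d$ is square-free, so "cyclic of order $d$" is automatic, and that $\mu_\sigma$ is defined over $K$ because $E$ is \emph{completely} defined over $K$). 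A point of $X_{0,\rm{ns}}^+(q,p)$ over $K$ is the datum of $E$ together with a $K$-rational cyclic $q$-isogeny and a $K$-rational unordered pair of non-split-Cartan-type structures at $p$ --- these exist precisely because the image of $\proj\bar\rho_{E,p}$ lies in a Borel (hence, restricted to $G_K$, the image of $\bar\rho_{E,p}$ has a stable line, giving the $q$-isogeny is actually a separate hypothesis: the prime $q$ with $\proj\bar\rho_{E,q}$ in a Borel gives the cyclic $q$-isogeny, and the non-split Cartan hypothesis at $p$ gives the $X_{\rm ns}^+(p)$-structure). Since both data share the same underlying $E/K$ and the same $j$-invariant, they glue to a $K$-point $P$ of the fibre product $Z_{d,0}(q,p)$.

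For the twisting relation, I would argue as follows. Let $\sigma\in G_\Q$ restrict to the non-trivial element of $\Gal(K/\Q)$. Applying $\sigma$ to $P$ gives the point ${}^\sigma P$ whose underlying elliptic curve is ${}^\sigma E$, equipped with the Galois conjugates of all the level structures. Now the isogeny $\mu_\sigma : {}^\sigma E \to E$ is a $K$-isomorphism of the relevant moduli data up to the Atkin--Lehner twist: $\mu_\sigma$ carries the $d$-isogeny structure on ${}^\sigma E$ (namely $\ker\hat\mu_{{}^\sigma\!\sigma} = {}^\sigma C_d$) to the \emph{dual} $d$-isogeny structure on $E$, which is exactly what $w_d$ records; and $\mu_\sigma$ carries the $q$-level and $p$-level structures on ${}^\sigma E$ to those on $E$ (here one uses that $\mu_\sigma$ has degree $d$ coprime to $qp$, so it is an isomorphism on the $q$- and $p$-torsion, and that the non-split Cartan structure at $p$ and the $q$-isogeny structure are the ones cut out intrinsically by $\proj\bar\rho_{E,p}$ and $\proj\bar\rho_{E,q}$, which are defined on all of $G_\Q$). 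Chasing this through the moduli interpretation of $Z_{d,0}(q,p)$ and of $w_d$ gives $w_d P = {}^\sigma P$.

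The main obstacle is the bookkeeping at $p$ and $q$: one must check that the $X_{0,\rm{ns}}^+(q,p)$-level structure on ${}^\sigma E$ transported via $\mu_\sigma$ genuinely matches the one on $E$, rather than some twisted variant. This is where Ellenberg's observation is needed --- the point is that $\proj\bar\rho_{E,p}$ (respectively $\proj\bar\rho_{E,q}$) is a homomorphism on $G_\Q$, not merely on $G_K$, so its image lying in the normaliser of a non-split Cartan (respectively a Borel) is a statement invariant under the $\Q$-curve structure; concretely, the non-split Cartan structure and the cyclic $q$-subgroup are stable under the combined action of $G_K$ and the isogenies $\mu_\sigma$, precisely because $\mu_\sigma({}^\sigma(\text{structure})) = \text{structure}$ up to scalars. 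The Atkin--Lehner involution $w_d$ at the $d$-level is the only place a genuine non-trivial transformation occurs, and it does so exactly because $\mu_\sigma$ has degree $d > 1$: passing from $\ker\hat\mu_\sigma$ to $\ker\mu_\sigma$ is the Atkin--Lehner swap. Once these compatibilities are verified on torsion, the relation $w_d P = {}^\sigma P$ follows formally.
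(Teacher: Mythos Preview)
Your proposal is correct and is exactly the argument the paper has in mind: the paper's own proof is a one-line citation of \cite[Proposition~2.2]{ellen}, and what you have written is a faithful unpacking of Ellenberg's moduli-theoretic argument --- the $K$-point comes from the triple $(C_d,C_q,\text{$p$-structure})$ on $E$, and the relation $w_dP={}^{\sigma}P$ follows from $\mu_{\sigma}({}^{\sigma}C_q)=C_q$ and the analogous compatibility at $p$, both of which encode the fact that $\proj\bar\rho_{E,q}$ and $\proj\bar\rho_{E,p}$ are defined on all of $G_{\Q}$. One cosmetic point: in your first paragraph the clause beginning ``the image of $\proj\bar\rho_{E,p}$ lies in a Borel'' momentarily conflates the $p$- and $q$-hypotheses before you untangle them; you may wish to clean that sentence up.
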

\begin{proof}
The proof of this result is identical to the proof of~{\cite[Proposition~2.2]{ellen}}.
\end{proof}
 As in \cite{ellen}, we are going to consider a suitable quadratic twist of $Z_{d,0}(q,p)$ whose $\Q$-rational points will correspond to $\Q$-curves completely defined over $K$, of degree~$d$, without complex multiplication and with level structures at $q$ and $p$ corresponding to the curve $X_{0,\rm{ns}}^+(q,p)$ (i.e., $\Q$-curves satisfying the conditions of Proposition~\ref{borel_main}).

Define the homomorphism $\psi:\Gal(K/\Q)\rightarrow\Aut_{\Q}(Z_{d,0}(q,p))$ by mapping $\sigma$, the non-trivial element of $\Gal(K/\Q)$, to $w_d$, the involution of $Z_{d,0}(q,p)$ induced by the Atkin--Lehner operator associated to $X_0(d)$. Let $Z_{d,0}^{\psi}(q,p)$ be a quadratic twist associated to $\psi$. By definition, $Z_{d,0}^{\psi}(q,p)$ is a curve defined over $\Q$ for which there exists a $K$-isomorphism $\varphi:Z_{d,0}(q,p)_{/K}\rightarrow Z_{d,0}^{\psi}(q,p)_{/K}$ such that $\varphi\circ w_d={}^{\sigma}\varphi$. This isomorphism yields a bijection between the sets $Z_{d,0}^{\psi}(q,p)(\Q)$ and $\{P\in Z_{d,0}(q,p)(K): w_dP={}^{\sigma} P\}$. By Lemma~\ref{Kpt}, we conclude that a $\Q$-curve as in Proposition~\ref{borel_main} gives rise to a $\Q$-point in $Z_{d,0}^{\psi}(q,p)$.

There is a natural degeneracy map $\delta:Z_{d,0}(q,p)\rightarrow X_{0,\rm{ns}}^+(q,p)$. Let $f:X_{0,\rm{ns}}^+(q,p)\rightarrow A$ stand for the morphism in~{\cite[Lemma~8.2]{darmer}} (where, as in~\cite{darmer}, $A$ is the winding quotient of the Jacobian of $X_{0,\rm{ns}}^+(q,p)$).  Note that this map is only defined over $\Q(\zeta_p)^+$. We define two morphisms $\gamma_1,\gamma_2:Z_{d,0}(q,p)\rightarrow A$ by 
\begin{equation*}
\gamma_1:= f\circ\delta\quad\text{and}\quad \gamma_2:=f\circ\delta\circ w_d.
\end{equation*}
Moreover, we define
\begin{equation*}
h_1:=\gamma_1\circ\varphi^{-1}\quad\text{and}\quad h_2:\gamma_2\circ\varphi^{-1}.
\end{equation*}
These two maps are defined over $L:=K(\zeta_p+\zeta_p^{-1})$. We finally set $h:=h_1+h_2$.

Denote by $\loc_L$ the ring of integers of $L$ and define $R:=\loc_L[1/6qp]$. Using the notation of section~\ref{rational}, $h$ can be extended to a morphism $Z_{d,0}^{\psi}(q,p)_{/R}\rightarrow A_{/R}$. By abuse of notation, we shall denote this morphism by $h$ as well.

In what follows, the point at infinity of $Z_{d,0}^{\psi}(q,p)$ is, of course, defined to be the image of the point at infinity of $Z_{d,0}(q,p)$ via $\varphi$.
\begin{lem}\label{form_im1}
The morphism $h$ is a formal immersion at $\infty_{/R}$.
\end{lem}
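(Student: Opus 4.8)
The plan is to reduce the claim to the formal immersion statement already proved for $X_{\mathrm{sp,ns}}^{+,+}(q,p)$, namely Corollary~\ref{corol_formim}, by analysing how the cotangent spaces transform under the twist $\varphi$ and the degeneracy map $\delta$. First I would recall that, since $\varphi:Z_{d,0}(q,p)_{/\bar{\Q}}\to Z_{d,0}^{\psi}(q,p)_{/\bar{\Q}}$ is an isomorphism, it suffices to check that $\gamma_1+\gamma_2\circ\varphi^{-1}\circ\varphi=\gamma_1+\gamma_2$ — or rather, after pulling back along $\varphi$, that the composite $Z_{d,0}(q,p)\to A$ given by $z\mapsto \gamma_1(z)+\gamma_2(z)$ is a formal immersion at the cusp $\infty$ — because a formal immersion is preserved under precomposition with an isomorphism. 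So I would work with $\gamma_1+\gamma_2 = f\circ\delta + f\circ\delta\circ w_d$ on $Z_{d,0}(q,p)$.

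Next, I would pin down what the cusp $\infty$ of $Z_{d,0}(q,p) = X_0(d)\times_{X(1)} X_{0,\mathrm{ns}}^+(q,p)$ maps to. The Atkin--Lehner involution $w_d$ of $X_0(d)$ does not fix the cusp $\infty$ in general; however $\delta$ forgets the $X_0(d)$-structure entirely, so $\delta(\infty)$ and $\delta(w_d\,\infty)$ are both the cusp $\infty$ of $X_{0,\mathrm{ns}}^+(q,p)$ (more precisely, the relevant cusps — one must be slightly careful about which cusp of $X_{0,\mathrm{ns}}^+(q,p)$ lies below $\infty$ on $Z_{d,0}(q,p)$, but by the usual choice-of-basis argument as in the proof of Proposition~\ref{int_j} we may arrange this to be the standard $\infty$). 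The key point is that $\delta$ is étale at $\infty$ in the direction that matters — or, better, that $\delta$ induces an isomorphism on the relevant piece of the completed local ring at $\infty$ — because the cusp $\infty$ of $Z_{d,0}(q,p)$ is unramified over the cusp $\infty$ of $X_{0,\mathrm{ns}}^+(q,p)$ with respect to the degeneracy map (the $X_0(d)$-level structure contributes only an unramified factor at the cusp, since $X_0(d)\to X(1)$ is unramified at $\infty$). Consequently $\delta^{*}$ and $(\delta\circ w_d)^{*}$ are both isomorphisms $\Cot_{\infty}(X_{0,\mathrm{ns}}^+(q,p))\to\Cot_{\infty}(Z_{d,0}(q,p))$ in characteristic $\ell$ for any prime $\ell$ of $L$ not dividing $6qp$.

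Now I would exploit the relation between $X_{0,\mathrm{ns}}^+(q,p)$ and $X_{\mathrm{sp,ns}}^{+,+}(q,p)$. Tracking through the construction, $\gamma_1+\gamma_2 = f\circ(\delta+\delta\circ w_d)$, and on the cotangent space at $\infty$ the map $f^{*}:\Cot(A)\to\Cot_{\infty}(X_{0,\mathrm{ns}}^+(q,p))$ is exactly the one shown to be surjective in the course of proving Corollary~\ref{corol_formim} (via Proposition~\ref{standardformal}: the $q$-expansion principle plus triviality of $S_2(\Gamma_0(q))$ for $q\in\{2,3,5,7,13\}$). Since $\delta^{*}$ composed with $\mathrm{id}+w_d^{*}$ on $\Cot_{\infty}$ is — after the usual check that $w_d^{*}$ acts by a unit on the one-dimensional relevant subspace, so that $\mathrm{id}+w_d^{*}$ is multiplication by $1\pm 1$; here one needs $w_d$ not to act by $-1$ on this cotangent direction, which holds because $w_d$ fixes no cotangent vector coming from $\delta^{*}$ nontrivially, equivalently because the two cusps get identified — an isomorphism, I conclude $h^{*}=\varphi^{*-1\,*}\circ(\gamma_1+\gamma_2)^{*}$ has surjective image, i.e. $h$ is a formal immersion at $\infty_{/R}$.

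The main obstacle I anticipate is precisely the analysis of $w_d^{*}$ on the one-dimensional cotangent space at $\infty$: one must rule out that $\mathrm{id}+w_d^{*}$ vanishes there, i.e. that $w_d^{*}$ acts by $-1$. This is the analogue of the reason one takes $h=h_1+h_2$ rather than $h_1-h_2$ (or vice versa), and it should follow from the moduli/cuspidal description — $w_d$ permutes the cusps above $\infty$ of $X_{0,\mathrm{ns}}^+(q,p)$ and after the base-change identification the two contributions add rather than cancel — together with the fact, used by Ellenberg in the proof of the analogue of Lemma~\ref{Kpt}, that $\varphi\circ w_d={}^{\sigma}\varphi$ forces the twisted point to be rational exactly when the $w_d$-action is the right one. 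I would make this precise by computing the $q$-expansion of $h^{*}f$ at $\infty$, showing its first coefficient is a nonzero multiple of $a_1(f)\neq 0$, exactly as in Proposition~\ref{standardformal}, so that the triviality of $S_2(\Gamma_0(q))$ again gives the contradiction if $a_1(h^{*}f)$ were forced to vanish after applying all Hecke operators $T_\ell$, $\ell\neq p$. Everything else is a routine transcription of the arguments already given for Proposition~\ref{int_j} and Corollary~\ref{corol_formim}.
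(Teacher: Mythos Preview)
Your reduction via $\varphi$ to showing that $\gamma=\gamma_1+\gamma_2$ is a formal immersion at $\infty$ is fine, and your observation that $\delta$ is unramified at $\infty$ (because the cusp $\infty$ of $X_0(d)$ has width $1$ over $X(1)$) is correct. The error is in the very next step: you claim that $(\delta\circ w_d)^{*}$ is also an isomorphism on cotangent spaces at $\infty$. It is not. The Atkin--Lehner involution $w_d$ sends the cusp $\infty$ of $X_0(d)$ to the cusp $0$, and the map $X_0(d)\to X(1)$ is ramified at $0$ with index $d\geq 2$. Hence $\delta\circ w_d:Z_{d,0}(q,p)\to X_{0,\mathrm{ns}}^+(q,p)$ is \emph{ramified} at $\infty$, and so $(\delta\circ w_d)^{*}$, and therefore $\gamma_2^{*}$, is the zero map on cotangent spaces.

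This is precisely what makes the proof work and what dissolves your ``main obstacle'' entirely: since $\gamma_2^{*}=0$, one has $\gamma^{*}=\gamma_1^{*}$, and the surjectivity of $\gamma_1^{*}$ follows immediately from the unramifiedness of $\delta$ at $\infty$ together with the fact (Darmon--Merel) that $f:X_{0,\mathrm{ns}}^+(q,p)\to A$ is a formal immersion at $\infty$. There is no need to analyse $\mathrm{id}+w_d^{*}$ or to worry about cancellation; the second summand simply contributes nothing at the level of cotangent vectors. Your proposed workaround (computing $q$-expansions and ruling out $w_d^{*}=-1$) is aimed at a difficulty that does not exist, and in fact your premise that both pullbacks are isomorphisms would, if true, make the situation genuinely delicate --- but it is false.
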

\begin{proof}
The arguments of the proof are essentially the ones used in the proof of~{\cite[Proposition~3.2]{ellen}}. For the convenience of the reader, we will present the proof here. We first note that it is enough to show that the morphism $\gamma:=\gamma_1+\gamma_2$ is a formal immersion at $\infty_{/R}$. Let $\lambda$ be a prime ideal of $R$ and let $\F_{\lambda}$ be the associated residue field. Writing $\Cot(A_{/\F_{\lambda}})$ for the cotangent space of $A_{/\F_{\lambda}}$ at $0$, and $\Cot_{\infty}(Z_{d,0}(q,p)_{/\F_{\lambda}})$ for the cotangent space of $Z_{d,0}(q,p)_{/\F_{\lambda}}$ at $\infty_{/\F_{\lambda}}$, it is enough to show that the map
\begin{equation*}
\gamma_{/\F_{\lambda}}^*:\Cot(A_{/\F_{\lambda}})\rightarrow \Cot_{\infty}(Z_{d,0}(q,p)_{/\F_{\lambda}})
\end{equation*}
induced by $\gamma$ is surjective.

Recall that, by definition, $\gamma_{1/\F_{\lambda}}$ factors as
\begin{equation*}
Z_{d,0}(q,p)_{/\F_{\lambda}}\xrightarrow{\delta_{/\F_{\lambda}}} X_{0,\rm{ns}}^+(q,p)_{/\F_{\lambda}}\xrightarrow{f_{/\F_{\lambda}}} A_{/\F_{\lambda}},
\end{equation*}
while $\gamma_{2/\F_{\lambda}}$ factors as
\begin{equation*}
Z_{d,0}(q,p)_{/\F_{\lambda}}\xrightarrow{(\delta\circ w_d)_{/\F_{\lambda}}}X_{0,\rm{ns}}^+(q,p)_{/\F_{\lambda}}\xrightarrow{f_{/\F_{\lambda}}} A_{/\F_{\lambda}}.
\end{equation*}
Since $(\delta\circ w_d)_{/\F_{\lambda}}$ is ramified at $\infty_{/\F_{\lambda}}$, we conclude that the map $\gamma_{2/\F_{\lambda}}^*$ induced on the cotangent spaces is $0$. Hence,
\begin{equation*}
\gamma_{/\F_{\lambda}}^*=\gamma_{1/\F_{\lambda}}^*.
\end{equation*}
On the other hand, $\delta_{/\F_{\lambda}}$ is unramified at $\infty_{/\F_{\lambda}}$. Moreover, the morphism $f:X_{0,\rm{ns}}^+(q,p)\rightarrow A$ has been proven to be a formal immersion at infinity in~{\cite[Lemma~8.2]{darmer}} (once again, we remark that the arguments used in~\cite{darmer} hold when $q\in\{2,3,5,7,13\}$, even though this result is only stated for $q\in\{2,3\}$). It follows that $\gamma^*_{1/\F_{\lambda}}$ surjects onto $\Cot_{\infty}(Z_{d,0}(q,p)_{/\F_{\lambda}})$, and, consequently,  so does $\gamma^*_{/\F_{\lambda}}$.
\end{proof}
\begin{lem}\label{torsion}
Let $P$ be a $\Q$-rational point in $Z_{d,0}^{\psi}(q,p)$. Then $h(P)$ is a torsion point of~$A(\bar{\Q})$.
\end{lem}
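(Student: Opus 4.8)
The plan is to exploit the hypotheses on $p$ and $q$ together with the finiteness of $A(\Q)$. The key point is that $h = h_1 + h_2$ is a morphism defined a priori over $L = K(\zeta_p+\zeta_p^{-1})$, not over $\Q$, so one cannot directly say that a $\Q$-rational point $P$ maps to a $\Q$-rational point of $A$ and conclude from Proposition~\ref{fin_quot}. First I would recall that $A$ is the winding quotient of the Jacobian of $X_{0,\rm{ns}}^+(q,p)$, so by~{\cite[Lemma~8.2]{darmer}} (extended to $q\in\{2,3,5,7,13\}$) the group $A(\Q)$ is finite. Then I would show that each of the points $h_1(P)$ and $h_2(P)$, while only defined over $L$, in fact lies in a finitely generated (indeed torsion) subgroup.

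The concrete approach: let $P \in Z_{d,0}^{\psi}(q,p)(\Q)$ and set $P' := \varphi^{-1}(P) \in Z_{d,0}(q,p)(K)$, which by construction satisfies $w_d P' = {}^{\sigma}P'$, where $\sigma$ is the nontrivial element of $\Gal(K/\Q)$. Then $h_1(P) = f(\delta(P'))$ and $h_2(P) = f(\delta(w_d P')) = f(\delta({}^{\sigma}P'))$. Now $\delta(P')$ is a point of $X_{0,\rm{ns}}^+(q,p)(K)$, and applying $\sigma$ to $h_1(P)$ — bearing in mind that $f$ is defined over $\Q(\zeta_p)^+$ and that $\sigma$ acts on $L/\Q$ compatibly — one gets ${}^{\sigma}(h_1(P))$ is the Galois conjugate of $f(\delta(P'))$, which can be related to $h_2(P)$. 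The upshot is that the sum $h_1(P) + h_2(P)$, or an appropriate trace of it, descends to a point of $A$ over a field over which things are rational, or more simply: $h(P)$ is fixed by $\Gal(L/\Q)$ up to the Hecke/Atkin--Lehner symmetry built into the construction. A cleaner route is to argue that $h$ itself is defined over a subfield, or that $\Tr_{L/\Q}$ applied componentwise shows $h(P)$ generates a subgroup on which $\Gal(\bar{\Q}/\Q)$ acts through a finite quotient, hence $h(P)\in A(L)$ with $L/\Q$ of bounded degree; combining with the Mordell--Weil theorem and the fact that $A$ has finite Mordell--Weil rank won't quite suffice, so instead I would invoke that $A(\Q)$ is finite and that the construction forces $h(P)$ to be $\Gal(\bar\Q/\Q)$-invariant (using $w_d P' = {}^\sigma P'$ to swap $h_1$ and $h_2$ under $\sigma$, so $h = h_1 + h_2$ is $\sigma$-invariant, and invariance under $\Gal(\bar\Q/L)$ is automatic since $h$ and $P$ are $L$-rational), whence $h(P)\in A(\Q)_{\rm tors}$.

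The main obstacle I anticipate is carefully tracking the fields of definition and the Galois action: $f$ is only defined over $\Q(\zeta_p)^+$, $h_1,h_2$ over $L = K(\zeta_p+\zeta_p^{-1})$, $\varphi$ over $K$, and one must verify that the Atkin--Lehner twist relation $\varphi\circ w_d = {}^{\sigma}\varphi$ interacts correctly with the action of $\Gal(L/\Q)$ on $A$. Concretely, for $\tau \in \Gal(\bar{\Q}/\Q)$ one needs ${}^{\tau}(h(P)) = h(P)$: if $\tau|_K = \mathrm{id}$ this should follow from $L$-rationality of $h$ and $P$ after checking $\tau$ fixes the relevant structure, while if $\tau|_K = \sigma$ one uses $w_d P' = {}^{\sigma}P'$ to see that $\tau$ interchanges the roles of $\gamma_1$ and $\gamma_2$, so $h_1(P)$ and $h_2(P)$ get swapped (possibly up to a further Galois twist coming from $\zeta_p$) and $h(P) = h_1(P) + h_2(P)$ is preserved. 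Once $h(P) \in A(\Q)$ is established, finiteness of $A(\Q)$ gives immediately that $h(P)$ is torsion. I would model the bookkeeping closely on the proof of~{\cite[Proposition~3.2]{ellen}} and the analogous torsion argument there, which is exactly the situation Ellenberg handles.
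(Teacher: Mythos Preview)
Your strategy is to show that $h(P)$ is $G_{\Q}$-invariant and hence lies in the finite group $A(\Q)$, but this is precisely where the argument breaks. The map $f:X_{0,\rm{ns}}^+(q,p)\to A$ is defined only over $\Q(\zeta_p)^+$: it is essentially $x\mapsto [(x)-(\infty)]$ followed by projection to $A$, where the cusp $\infty$ has field of definition $\Q(\zeta_p)^+$. Thus for any $\tau\in G_{\Q}$ acting nontrivially on $\zeta_p+\zeta_p^{-1}$ one has ${}^{\tau}f\neq f$; the two differ by translation by the image of the cuspidal divisor $(\infty)-({}^{\tau}\infty)$. Your claim that for $\tau|_K=\id$ invariance ``should follow from $L$-rationality of $h$'' already fails here, since such $\tau$ need not fix $L=K(\zeta_p+\zeta_p^{-1})$. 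Your parenthetical ``possibly up to a further Galois twist coming from $\zeta_p$'' flags exactly this obstruction but does not resolve it; the conclusion is that ${}^{\tau}(h(P))-h(P)$ is generally nonzero, and you have no finiteness information about $A(L)$ or $A(\Q(\zeta_p)^+)$ to exploit.

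The paper supplies the missing ingredient: the Manin--Drinfeld theorem. One checks that ${}^{\tau}(h(P))-h(P)$ is (the image in $A$ of) a cuspidal divisor for every $\tau\in G_{\Q}$, hence is torsion; so some positive multiple $m\cdot h(P)$ is $G_{\Q}$-invariant, lies in the finite group $A(\Q)$, and is therefore torsion. Consequently $h(P)$ itself is torsion. This is the argument of~\cite[Lemma~8.3]{darmer}, and it sidesteps entirely the need to pin down the exact Galois action on $h_1,h_2$.
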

\begin{proof}
This argument was used in the proof of~{\cite[Lemma~8.3]{darmer}}. Let $\tau\in G_{\Q}$. It is easy to check that ${}^{\tau} (h(P))-h(P)$ is a cuspidal divisor. Therefore, by the theorem of Manin--Drinfeld, there exists a positive integer $m$ such that \begin{equation*}m({}^{\tau} (h(P)))=mh(P).\end{equation*} This means that $mh(P)$ is defined over $\Q$. As $A(\Q)$ is finite, we conclude that $mh(P)$ is torsion, and so is $h(P)$.
\end{proof}
\begin{proof}[Proof of Proposition~\ref{borel_main}]
Note that if $q\geq 11$ is a prime different from $13$, then~{\cite[Proposition~3.2]{ellen}} yields that $E$ has potentially good reduction at every prime of characteristic $>3$. As the image of $\proj\bar{\rho}_{E,p}$ is contained in the normaliser of a non-split Cartan subgroup of $\PGL_2(\F_p)$, Proposition~\ref{goodp} yields that $E$ cannot have potentially multiplicative reduction at primes above $2$ and $3$ either. Therefore, $E$ has potentially good reduction everywhere. In other words, the $j$-invariant of $E$ lies in $\loc_K$. 

We are reduced to proving the cases where $q\in\{2,3,5,7, 13\}$. As noted above, a $\Q$-curve as in the statement of the Proposition~\ref{borel_main} gives rise to a $\Q$-rational point $P$ in $Z_{d,0}^{\psi}(q,p)$. If $\lambda$ is a non-archimedean prime of $K$ such that $N_{K/\Q}(\lambda)^2\not\equiv 1\pmod{p}$, then Proposition~\ref{goodp} asserts that $E$ has potentially good reduction at $\lambda$, as $p\geq 11$. Suppose that $N_{K/\Q}(\lambda)^2\equiv 1\pmod{p}$. Note that under this condition $\lambda$ remains a prime in $L$. As $p\geq 11$, the prime $\lambda$ does not divide $6$. Suppose, for the sake of contradiction, that $E$ has potentially multiplicative reduction at $\lambda$. Then the section of $Z_{/R}$ corresponding to $P$ meets a cusp in the fibre above $\lambda$. By changing bases if needed, we may assume that this cusp is $\infty$. Now, we know that $h(P)\in\Tors(A(L))$. Moreover, the torsion subgroup of $A(L)$ injects, via reduction modulo $\lambda$, into $A(\F_{\lambda})$. But $h(P)$ meets $h(\infty)$ in the special fibre above $\lambda$. Therefore, $h(P)=h(\infty)$. Since $h$ is a formal immersion at $\infty_{/\F_{\lambda}}$, we conclude that $P=\infty$, which is absurd.
\end{proof}
\begin{proof}[Proof of Theorem~\ref{borelmain}]
By Theorem~\ref{noborel}, the prime $q$ belongs to a finite list. For each one of these primes, we obtain a $K$-rational point in $X_0(d)\times_{X(1)} X_0(q)$, which is a modular curve with at least three cusps. By an argument due to Serre (see~{\cite[Lemme~18]{ser_que}}), we know that there exists a constant $C'_K$, depending only on the number field $K$, such that, for $p>C_K'$, the image of $\proj\bar{\rho}_{E,p}$ is not exceptional. By Theorem~\ref{noborel}, there is another constant $C_{K,d}''$ such that, for $p>C_{K,d}''$, it is also not contained in a Borel subgroup. Therefore, if $p>C_{K,d}''$, the image of $\proj\bar{\rho}_{E,p}$ is contained in the normaliser of a Cartan subgroup (split or non-split). If it is contained in the normaliser of a split Cartan subgroup, then we can use the result of Le Fourn that we stated as Proposittion~\ref{cartan_main} to conclude that $j(E)\in \loc_K$. In the case where the image of $\proj\bar{\rho}_{E,p}$ is contained in the normaliser of a non-split Cartan, we use Proposition~\ref{borel_main} that we have just proven in order to, once again, conclude that $j(E)\in\loc_K$. In any case, there is a constant $C''_{K,d}$ such that, if $p$ is a prime $>C''_{K,d}$, then either $\proj\bar{\rho}_{E,p}$ is surjective, or $j(E)$ is integral. As the modular curve $X_0(d)\times_{X(1)}X_0(q)$ has at least three cusps, Siegel's theorem asserts that there are only finitely many points in $X_0(d)\times_{X(1)}X_0(q)(K)$ whose $j$-invariants are in $\loc_K$. As $q$ is in a finite list of primes, we obtain a finite list of $j$-invariants of $\Q$-curves satisfying the conditions of the theorem and for which there exists a prime $p>C_{K,d}''$ with $\bar{\rho}_{E,p}$ non-surjective. Noting that surjectiveness only depends on the $j$-invariant if $j(E)\neq 0,1728$ (as any two elliptic curves with the same $j$-invariant are quadratic twists of each other as long as $j\neq 0,1728$), we can now use the theorem of Serre that we presented in the introduction as Theorem~\ref{serthm} for each one of these finitely many $j$-invariants, and we obtain the result.
\end{proof}
\subsection{The normaliser of a split Cartan case}
The proof of Theorem~\ref{cartanmain} is a simple exercise using Le Fourn's Proposition~\ref{cartan_main}.
\begin{proof}[Proof of Theorem~\ref{cartanmain}]
We may assume that $d\geq 2$, as the case $d=1$ is precisely the one treated by Theorem~\ref{qmaintheorem}. The aim is to show that, for each $d$ as in the statement of the theorem, there are only finitely many points in $X_0(d)(K)$ with $j$-invariant in $\loc_K$. The result follows, as, by the same arguments employed in the proof of Theorem~\ref{borelmain}, there exists a constant $C_{K,d}''$ (keeping up with the notation used in the proof of Theorem~\ref{borelmain}) such that, if $p$ is a prime $>C_{K,d}''$ and $E/K$ is a curve satisfying the conditions of the theorem, then either $\proj\bar{\rho}_{E,p}$ is surjective, or $j(E)\in \loc_K$, and then we only have to use Serre's Theorem~\ref{serthm} for each of the finitely many points of $X_0(d)(K)$ in the same way we used it in the proof of Theorem~\ref{borelmain}.

If the genus of $X_0(d)$ is $\geq 2$, then a theorem of Faltings asserts that the set $X_0(d)(K)$ is finite, and we are done. If the genus of $X_0(d)$ is $1$, then the finiteness of the number of points in $X_0(d)(K)$ with integral $j$-invariant comes from a theorem of Siegel. Finally, if the genus $X_0(d)$ is $0$, then, as we require $d$ to be square-free and not in the set $\{2,3,5,7,13\}$, we conclude that $d$ is a product of two distinct primes. Therefore, $X_0(d)$ has at least three cusps, and we can use Siegel's theorem in order to conclude the finiteness of the number of points in $X_0(d)(K)$ with integral $j$-invariant.
\end{proof}

\bibliography{final_serreproblem}
\bibliographystyle{abbrv}

\end{document}